\newtheorem{theorem}{Theorem}[section]
\newtheorem{lemma}[theorem]{Lemma}
\newenvironment{proof}[1][]%
{\noindent {\setcounter{equation}{0}\it Proof.
}{#1}{}}{\hfill$\Box$\vspace{2ex}}
\def\longbox#1{\parbox{0.85\textwidth}{#1}}
\begin{document}
\title{Coloring square-free Berge graphs}

\author{%
Maria Chudnovsky\thanks{Mathematics Department, Princeton University, 
  NJ, USA. Partially supported by NSF grants DMS-1265803 and DMS-1550991.
This material is based upon work supported in part by the U. S. Army  Research 
Laboratory and the U. S. Army Research Office under    grant number 
W911NF-16-1-0404. }
\and%
Irene Lo\thanks{Department of Industrial Engineering and Operations
Research, Columbia University, New York, NY, USA.}
\and%
Fr\'ed\'eric Maffray\thanks{CNRS, Laboratoire G-SCOP, University of
Grenoble-Alpes, France. Partially supported by ANR project
STINT under reference ANR-13-BS02-0007}
\and%
Nicolas Trotignon\thanks{CNRS, LIP, ENS, Lyon, France.  Partially
supported by ANR project STINT under reference ANR-13-BS02-0007, and
by the LABEX MILYON (ANR-10-LABX-0070) of Universit\'e de Lyon, within
the program ‘‘Investissements d'Avenir’’ (ANR-11-IDEX-0007) operated
by the French National Research Agency (ANR)}
\and%
Kristina Vu\v{s}kovi\'c\thanks{School of Computing, University of
Leeds, Leeds LS2 9JT, UK; and Faculty of Computer Science (RAF), Union
University, Knez Mihailova 6/VI, 11000 Belgrade, Serbia.  Partially
supported by EPSRC grant EP/K016423/1 and Serbian Ministry of
Education and Science projects 174033 and III44006.}}

\date{\today}

 \maketitle

\begin{abstract}
We consider the class of Berge graphs that do not contain an induced 
cycle of length four.  We present a purely graph-theoretical algorithm
that produces an optimal coloring in polynomial time for every graph
in that class.
\end{abstract}

\noindent{\bf Keywords}: Berge graph, square-free, coloring, algorithm

\section{Introduction}

A graph $G$ is \emph{perfect} if every induced subgraph $H$ of $G$
satisfies $\chi(H)=\omega(H)$, where $\chi(H)$ is the chromatic number
of $H$ and $\omega(H)$ is the maximum clique size in $H$.  In a graph
$G$, a \emph{hole} is an induced  cycle with at least four vertices
and an \emph{antihole} is the complement of a hole.  We say that graph
$G$ \emph{contains} a graph $F$, if $F$ is isomorphic to an induced
subgraph of $G$.  A graph $G$ is {\em $F$-free} if it does not contain
$F$, and for a family of graphs ${\cal F}$, $G$ is {\em ${\cal
F}$-free} if $G$ is $F$-free for every $F\in {\cal F}$.  Berge
{\cite{ber60,ber61,ber85}} introduced perfect graphs and conjectured
that a graph is perfect if and only if it does not contain an odd hole
or an odd antihole.  A \emph{Berge graph} is any graph that contains
no odd hole and no odd antihole.  This famous question (the Strong
Perfect Graph Conjecture) was solved by Chudnovsky, Robertson, Seymour
and Thomas \cite{CRST}: \emph{Every Berge graph is perfect}.
Moreover, Chudnovsky, Cornu\'ejols, Liu, Seymour and Vu\v{s}kovi\'c
\cite{CCLSV} devised a polynomial-time algorithm that determines if a
graph is Berge.

It is known that one can obtain an optimal coloring of a perfect graph
in polynomial time due to the algorithm of Gr\"otschel, Lov\'asz and
Schrijver \cite{GLS}.  This algorithm however is not purely
combinatorial and is usually considered impractical.  No purely
combinatorial algorithm exists for coloring all Berge graphs optimally
and in polynomial time.

The \emph{length} of a path or cycle is the number of its
edges. In what follows we will use the term ``path'' to mean ``induced
(or chordless) path''.
For a path $P$ with ends $a,b$, the {\em interior}
of $P$ is the set $V(P) \setminus \{a,b\}$; the interior of $P$ is
denoted by $P^*$.  We let $C_k$ denote the hole of length $k$ ($k\ge
4$).  The graph $C_4$ is also referred to as a {\em square}.  A graph
is \emph{chordal} if it is hole-free.  It is well-known that chordal
graphs are perfect and that their chromatic number can be computed in
linear time (see \cite{Gol}).

Farber \cite{Far89}, and later Alekseev \cite{Ale91}, proved that the
number of maximal cliques in a square-free graph on $n$ vertices is
$O(n^2)$.  Moreover it is known that one can list all the maximal
cliques in a graph $G$ in time $O(n^3K)$, where $K$ is the number of
maximal cliques; see \cite{TIAS,MU04} among others.  It follows that
finding $\omega(G)$ (the size of a maximum clique) can be done in
polynomial time for any square-free graph, and in particular finding
$\chi(G)$ can be done in polynomial time for a square-free Berge
graph.  Moreover, Parfenoff, Roussel and Rusu \cite{PRR} proved that
every square-free Berge graph has a vertex whose neigbhorhood is
chordal, which yields another way to find all maximal cliques in
polynomial time.  However getting an exact coloring of a square-free
Berge graph is still hard, and this is what we do.  The main result of
this paper is a purely graph-theoretical algorithm that produces an
optimal coloring for every square-free Berge graph in polynomial time.

\begin{theorem}\label{thm:main}
There exists an algorithm which, given any square-free Berge graph $G$
on $n$ vertices, returns a coloring of $G$ with $\omega(G)$ colors in
time $O(n^9)$.
\end{theorem}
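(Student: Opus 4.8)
The natural approach is a decomposition-based recursion, following the standard paradigm for coloring graphs in structured classes. The plan is to use a structure theorem for square-free Berge graphs that says every such graph is either "basic" (belonging to some list of well-understood classes that can be colored directly) or admits one of a small number of decompositions — most likely a 2-join, a balanced skew partition, or a homogeneous pair (or some clean subset of these). One would then prove, decomposition by decomposition, that an optimal coloring of $G$ can be assembled in polynomial time from optimal colorings of strictly smaller "blocks of decomposition" (the graphs obtained by replacing the decomposition's pieces with small gadgets). Since $\omega$ is computable in polynomial time here (as noted in the excerpt via Alekseev's bound and clique enumeration), at each node of the recursion we know the target number of colors, which is what makes the "clique-size-many colors" bookkeeping tractable.

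Concretely, the steps I would carry out are: (1) state the relevant structure theorem, isolating the exact set of basic classes and decompositions that occur — this is where one must be careful, since general Berge decomposition theorems allow skew partitions that are notoriously hard to handle algorithmically for coloring, so the square-free hypothesis must be exploited to avoid or tame them; (2) handle each basic class, showing it can be optimally colored in polynomial time (bipartite graphs, line graphs of bipartite graphs, complements of these, and whatever else appears — each is classical or easy); (3) for each decomposition type, define the blocks, bound their size (each should have fewer vertices, or at least the recursion tree should be polynomially bounded), and give the "gluing" procedure that lifts colorings of the blocks to a coloring of $G$ with $\omega(G)$ colors; (4) assemble these into one recursive algorithm, and bound its running time — verifying the decompositions can be found in polynomial time, that the recursion depth and branching are controlled, and that the per-step cost multiplies out to $O(n^9)$.

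The gluing step for the 2-join is the technical heart and the main obstacle: for coloring (as opposed to, say, maximum weight stable set), a single 2-join does not obviously compose, because the two blocks may "disagree" on how colors are distributed across the two connecting sets, and one must argue — typically via a parity or flow argument on an auxiliary structure, or by iterating over the 2-join to reduce to a chain/path-like configuration — that the colorings can be made consistent. I expect this to require either a careful analysis of the possible color patterns on the "pieces" of the 2-join, possibly using that $G$ is square-free to limit how these pieces interact, or a clever choice of gadget that forces compatibility automatically. The skew-partition case is the other danger point; if the structure theorem still produces (balanced) skew partitions, I would need the known fact that in a minimal counterexample to perfection a skew partition cannot occur, adapted to the algorithmic setting, or else a separate argument showing skew partitions either don't arise here or can be bypassed. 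Once both of these are dispatched, the homogeneous-pair case and the running-time accounting should be comparatively routine, and the $O(n^9)$ bound falls out of counting the recursive calls against the per-call cost of finding decompositions and enumerating maximal cliques.
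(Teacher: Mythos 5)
Your proposal correctly identifies the general paradigm (recurse on a decomposition, color the pieces, glue), but it leaves unresolved exactly the two steps that you yourself flag as the ``technical heart'' and the ``danger point,'' and those are where all the content of the theorem lives. The paper does not invoke the general Berge structure theorem with 2-joins, balanced skew partitions and homogeneous pairs at all. Instead it splits into two cases: if $G$ has no prism then $G$ lies in the class ${\cal A}$ of Artemis graphs (square-freeness already excludes antiholes of length at least $6$) and is colored by the known $O(n^9)$ algorithm of Maffray and Trotignon; if $G$ contains a prism, the paper proves a bespoke decomposition theorem: $G$ admits a \emph{good partition} $(K_1,K_2,K_3,L,R)$, i.e.\ a cutset $K_1\cup K_2\cup K_3$ that is the union of the two cliques $K_1\cup K_2$ and $K_2\cup K_3$ and separates $L$ from $R$, together with the extra conditions (iii)--(v). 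So rather than avoiding skew-partition-like cutsets, the paper embraces a carefully restricted one: conditions (iii) and (iv) are precisely what make the gluing work, via a Kempe-chain (bichromatic component swapping) argument that reconciles the colorings of $G\setminus R$ and $G\setminus L$ on $K_3$ one bad vertex at a time (Lemma~\ref{lem:cutset}). Your plan contains no substitute for either of these: you do not exhibit a gadget or color-pattern analysis that composes optimal colorings across a 2-join (none is known at this level of generality), and you do not say how skew partitions would actually be avoided or tamed.

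There are two further gaps. First, the existence of the decomposition: proving that a good partition exists whenever a square-free Berge graph contains a prism occupies most of the paper, requiring one to grow the prism into a maximal hyperprism or a $K_4$-strip system, analyze major neighbors (Theorems~\ref{thm:ehyp} and \ref{thm:ohyp}), and carry out a long case analysis for line-graphs of bipartite subdivisions of $K_4$; none of this is supplied by ``state the relevant structure theorem.'' Second, the running time: the $O(n^9)$ bound does not come from bounding block sizes in a generic recursion, but from condition (v) of a good partition --- each decomposition step consumes a triad with one vertex in $L$ and one in $R$, and since the cutset is a union of two cliques it contains no triad, no triad can label two nodes of the decomposition tree; hence the tree has $O(n^3)$ nodes, each processed in $O(n^6)$, plus a single $O(n^9)$ recognition step at the root. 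Without these three ingredients (the specific cutset, the Kempe-chain merge, and the triad-counting bound) the proposal is a research plan rather than a proof.
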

 
A \emph{prism} is a graph that consists of two vertex-disjoint
triangles (cliques of size $3$) with three vertex-disjoint paths $P_1,
P_2, P_3$ between them, and with no other edge than those in the two
triangles and in the three paths.  Note that if two of $P_1, P_2, P_3$
have lengths of different parities, then their union induces an odd
hole.  So in a Berge graph, the three paths of a prism have the same
parity.  A prism is \emph{even} (resp.~\emph{odd}) if these three
paths all have even length (resp.~all have odd length).

Let $\cal A$ be the class of graphs that contain no odd hole, no
antihole of length at least $6$, and no prism.  This class is studied
in \cite{MT}, where purely graph-theoretical algorithms are devised
for coloring and recognizing graphs in that class.  In particular:
\begin{theorem}[\cite{MT}]\label{thm:MT}
There exists an algorithm which, given any graph $G$ in class ${\cal
A}$ on $n$ vertices, returns a coloring of $G$ with $\omega(G)$ colors
and a clique of size $\omega(G)$, in time $O(n^6)$.
\end{theorem}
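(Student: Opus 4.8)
\medskip
\noindent\emph{Proof proposal.}
The natural route is through a decomposition theorem for the class $\mathcal{A}$, in the spirit of the Strong Perfect Graph Theorem but much tamer: forbidding odd holes, antiholes of length at least six, and prisms is precisely what destroys the decompositions that are problematic for coloring (notably $2$-joins and balanced skew partitions). Concretely, the technical core would be to prove that every $G\in\mathcal{A}$ is either \emph{basic} --- isomorphic to a bipartite graph, the complement of a bipartite graph, the line graph of a bipartite graph, or the complement of the line graph of a bipartite graph --- or else admits a clique cutset or a homogeneous pair. One proves this by assuming $G$ has none of the listed cutsets and running a case analysis, anchored on how a longest hole (or a suitable configuration that is forbidden except for one vertex) attaches to the rest of $G$, to conclude that $G$ must be basic. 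This is the step I expect to require the most work, and it is essentially the content of \cite{MT}.

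Granting the decomposition, the algorithm is recursive. The basic cases are handled by matching theory in polynomial time: bipartite graphs are trivial; for the line graph of a bipartite graph $B$ an optimal coloring is a minimum edge-coloring of $B$, which uses $\Delta(B)=\omega(G)$ colors by K\"onig's edge-coloring theorem, and a maximum clique is the set of edges at a vertex of maximum degree of $B$; the two complementary classes reduce, via K\"onig's theorem and Gallai-type identities, to partitioning the vertex set (resp.\ edge set) of a bipartite graph into a minimum number of cliques (resp.\ stars), whose value equals $\omega$ of the relevant complement. For the recursive step, a clique cutset $K$ splitting $G$ into overlapping pieces $G_1,G_2$ is treated in the classical way of Dirac and Tarjan: color each $G_i$ optimally, permute colors so the two colorings agree on $K$, and glue; then $\chi(G)=\max(\chi(G_1),\chi(G_2))\le\omega(G)$, and a maximum clique of $G$ lies in some $G_i$. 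A homogeneous set is handled by coloring the contracted graph and the set separately and substituting, again keeping $\omega$ colors. The delicate case is a homogeneous pair $(A,B)$: one colors the graph induced on $A\cup B$ and the graph obtained by reducing $A$ and $B$ to a few vertices each, and must patch these colorings so that they remain mutually compatible along all of $A$ and $B$ while still using only $\omega(G)$ colors. This recombination is the main obstacle: homogeneous pairs are notoriously awkward for coloring, and a naive substitution is neither obviously correct nor obviously polynomial.

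For the running time one argues that the decomposition recursion has only polynomially many nodes: clique-cutset decompositions are controlled by a standard counting bound, and working with \emph{proper} homogeneous pairs together with an appropriate potential function (counting vertices and ``distinguished'' pairs) keeps the homogeneous-pair branching polynomial. Each node costs polynomial time --- to detect a cutset, to recognize a basic graph, or to recombine colorings and cliques --- and multiplying through gives the claimed $O(n^9)$ bound, with the maximum clique obtained by carrying it up through every recombination rule.
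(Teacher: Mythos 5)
This statement is not proved in the paper at all: it is quoted verbatim from \cite{MT}, and the route taken there is entirely different from what you propose. The class $\mathcal{A}$ is the class of Artemis graphs, and the theorem of \cite{MT} is that every such graph is perfectly contractile; concretely, every graph in $\mathcal{A}$ that is not a clique contains an \emph{even pair} whose contraction stays in $\mathcal{A}$. The coloring algorithm repeatedly finds and contracts an even pair until a clique remains, colors that clique, and pulls the coloring back through the contractions; this simultaneously yields an $\omega(G)$-clique. No decomposition into basic classes is involved.

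Your proposed proof has a genuine gap at its core: the decomposition theorem you posit (every $G\in\mathcal{A}$ is basic, or has a clique cutset or homogeneous pair) is not an established result, and there are strong reasons to doubt it as stated. Three of your four ``basic'' classes do not even lie in $\mathcal{A}$: line graphs of bipartite graphs generically contain prisms (e.g.\ $L(K_{3,3})$), and complements of bipartite graphs and of line graphs of bipartite graphs generically contain antiholes of length $6$; so a decomposition of $\mathcal{A}$ would not naturally terminate at these classes. Beyond that, you yourself flag that the homogeneous-pair recombination step is ``neither obviously correct nor obviously polynomial,'' which is exactly the kind of unresolved obstacle that a proof cannot leave open --- indeed, the difficulty of coloring across homogeneous pairs is one of the reasons the literature on this class went through even pairs instead. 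As written, the argument establishes neither the structural claim it relies on nor the algorithmic step it acknowledges to be delicate.
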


Note that every antihole of length at least $6$ contains a
square; so a square-free graph contains no such antihole.  

Since Theorem~\ref{thm:MT} settles the case of graphs that have no
prism, we may assume for our proof of Theorem~\ref{thm:main} that we
are dealing with a graph that contains a prism.  The next sections
focus on the study of such graphs.  We will prove that whenever a
square-free Berge graph $G$ contains a prism, it contains a cutset of
a special type, and, consequently, that $G$ can be decomposed into two
induced subgraphs $G_1$ and $G_2$ such that an optimal coloring of $G$
can be obtained from optimal colorings of $G_1$ and $G_2$.

Note that results from~\cite{MTa} show that finding an induced prism
in a Berge graph can be done in polynomial time but that finding an
induced prism in general is NP-complete.

In \cite{M}, it was proved that when a square-free Berge graph
contains no odd prism, then either it is a clique or it has an ``even
pair'', as suggested by a conjecture of Everett and Reed (see
\cite{eps}).  However, this property does not carry over to all
square-free Berge graphs; indeed it follows from \cite{Houg} that the
line-graph of any $3$-connected square-free bipartite graph (for
example the ``Heawood graph'') is a square-free Berge graph with no
even pair.

\medskip

We finish this section with some notation and terminology.  In a graph
$G$, given a set $T\subset V(G)$, a vertex of $V(G) \setminus T$ is
\emph{complete to $T$} if it is adjacent to all vertices of $T$.  A
vertex of $V(G) \setminus T$ is \emph{anticomplete to $T$} if it is
non-adjacent to every vertex of $T$.  Given two disjoint sets
$S,T\subset V(G)$, $S$ is \emph{complete} to $T$ if every vertex of
$S$ is complete to $T$, and $S$ is \emph{anticomplete} to $T$ if every
vertex of $S$ is anticomplete to $T$.  Given a cycle, any
edge between two vertices that are not consecutive along it
is a \emph{chord}.  A cycle that has no chord is
\emph{chordless}.

The \emph{line-graph} of a graph $H$ is the graph $L(H)$ with
vertex-set $E(H)$ where $e,f\in E(H)$ are adjacent in $L(H)$ if they
share an end in $H$.

In a graph $J$, \emph{subdividing} an edge $uv\in E(J)$ means removing
the edge $uv$ and adding a new vertex $w$ and two new edges $uw,vw$.
Starting with a graph $J$, the effect of repeatedly subdividing edges
produces a graph $H$ called a \emph{subdivision} of $J$.  Note that
$V(J)\subseteq V(H)$.  A \emph{bipartite subdivision} of a graph $J$
is any subdivision of $J$ that is bipartite.

\begin{lemma}\label{C4cliques}
Let $G$ be square-free.  Let $K$ be a clique in $G$, possibly empty.
Let $X_1, X_2, \ldots, X_k$ be pairwise disjoint subsets of $V(G)$,
also disjoint from $K$, such that $X_i$ is complete to $X_j$ for all
$i\neq j$, and let $X = \bigcup_i X_i$.  Suppose that for every $v$ in
$K$, there is an integer $i$ so that $v$ is complete to $X\setminus
X_i$.  Then there is an integer $i$ such that $(K\cup X)\setminus X_i$
is a clique in $G$.
\end{lemma}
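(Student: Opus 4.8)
The plan is to use the exclusion of an induced $C_4$ three times: once to understand the internal structure of the parts $X_i$, and twice to locate a single index that works simultaneously for every vertex of $K$. If $k\le 1$ the statement is immediate (for $k=1$ take $i=1$, since $(K\cup X)\setminus X_1=K$), so assume $k\ge 2$. First I would show that \emph{at most one} of $X_1,\dots,X_k$ fails to be a clique: if $X_i$ and $X_j$ with $i\ne j$ both contained a non-edge, say $a\not\sim a'$ in $X_i$ and $b\not\sim b'$ in $X_j$, then since $X_i$ is complete to $X_j$ and the parts are pairwise disjoint, the four distinct vertices $a,b,a',b'$ would induce a $C_4$, a contradiction. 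So either all parts are cliques, or exactly one part $X_{i_0}$ is not; note that in the latter case the index in the conclusion is forced to equal $i_0$, since $X\setminus X_i$ must be a clique.

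Next I would record the consequence of the hypothesis on $K$. For $v\in K$ set $S(v)=\{\,i: v\text{ is complete to }X\setminus X_i\,\}$, which is nonempty by assumption. If two distinct indices $i,i'$ lie in $S(v)$, then $v$ is complete to $(X\setminus X_i)\cup(X\setminus X_{i'})=X$, so $S(v)=\{1,\dots,k\}$; hence for each $v$, either $v$ is complete to all of $X$, or $S(v)$ is a singleton $\{j(v)\}$ and the non-neighbours of $v$ in $X$ (of which there is then at least one) all lie in $X_{j(v)}$. The conclusion will follow once I establish two facts: (a) if every $X_i$ is a clique, then $\bigcap_{v\in K}S(v)\ne\emptyset$, and any index in this intersection then makes $(K\cup X)\setminus X_i$ a clique; (b) if $X_{i_0}$ is the unique non-clique part, then $i_0\in S(v)$ for every $v\in K$, and then $i=i_0$ works.

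For (a), suppose two vertices $v,v'\in K$ have $S(v)=\{i\}$ and $S(v')=\{i'\}$ with $i\ne i'$. Since $i'\notin S(v)$, the vertex $v$ has a non-neighbour, which must lie in $X_i$ (because $v$ is complete to $X\setminus X_i\supseteq X_{i'}$); call it $a$. Symmetrically $v'$ has a non-neighbour $a'\in X_{i'}$. Now $v\sim v'$ because both lie in the clique $K$; $v\sim a'$ since $a'\in X_{i'}\subseteq X\setminus X_i$; $v'\sim a$ similarly; and $a\sim a'$ since they lie in distinct parts. As $K$ is disjoint from $X$, the vertices $v,a',a,v'$ are distinct, and together with $v\not\sim a$ and $v'\not\sim a'$ they induce a $C_4$ --- a contradiction. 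Hence any two vertices of $K$ with singleton $S$-sets share the same index, so all the singleton sets (if any) coincide and $\bigcap_{v\in K}S(v)$ is nonempty. For (b), suppose some $v\in K$ is not complete to $X\setminus X_{i_0}$. Choosing $i^*\in S(v)$, we get $i^*\ne i_0$, so $v$ is complete to $X_{i_0}$, and $v$ has a non-neighbour $a$, which lies in $X_{i^*}$; picking a non-edge $b\not\sim b'$ of $X_{i_0}$, the four distinct vertices $v,b,a,b'$ induce a $C_4$, again a contradiction.

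The one genuinely delicate step is the previous paragraph, where one must check that the ``bad index'' of a vertex of $K$ is well defined and that two different bad indices really produce four \emph{distinct} vertices inducing a $C_4$ --- not merely a $4$-cycle with a chord. The hypotheses that the $X_i$ are pairwise disjoint and disjoint from $K$ are exactly what guarantees this; the remaining verifications (that $X\setminus X_i$ is a clique and is complete to $K$ for the chosen $i$) are routine bookkeeping.
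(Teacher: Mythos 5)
Your proof is correct and follows essentially the same route as the paper's: both arguments rest on the same three $C_4$ configurations (two non-edges in distinct parts, two vertices of $K$ with different ``bad'' parts, and a vertex of $K$ missing a vertex of one part combined with a non-edge in another). The only difference is organizational --- you split into cases according to whether some $X_{i_0}$ fails to be a clique, while the paper picks the index $j$ with $X\setminus X_j$ a clique and the index $h$ with $K$ complete to $X\setminus X_h$ and derives the final contradiction in one step --- but the underlying argument is the same.
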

\begin{proof}
First observe that there exists an integer $j$ such that $X\setminus
X_j$ is a clique, for otherwise two of $X_1, \ldots, X_k$ are not
cliques and their union contains a square.  Hence if $K$ is empty, the
lemma holds.  

Now we claim that $K$ is complete to at least $k-1$ of the $X_i$'s.
For suppose on the contrary that $K$ is not complete to any of $X_1$
and $X_2$.  Then there are vertices $v_1, v_2 \in K$, $x_1 \in X_1$,
$x_2 \in X_2$ such that for $i \in \{1,2\}$ and $j \in
\{1,2\}\setminus \{i\}$, $v_i$ adjacent to $x_i$ and non-adjacent to
$x_j$.  By the assumption, $v_1\neq v_2$.  Then $\{v_1,x_1,x_2,v_2\}$
induces a square, contradiction.  Hence there exists an index $h$ such
that $K$ is complete to $X\setminus X_h$.
	
Suppose that the lemma does not hold.  Then $j \neq h$ and there are
vertices $x, x' \in X_j$, $v \in K$, $w \in X_h$ such that $x$ and
$x'$ are non-adjacent and $v$ and $w$ are non-adjacent.  Then
$\{x,v,x',w\}$ induces a square, contradiction.  This proves the lemma.
\end{proof}

In a graph $G$, we say (as in \cite{CRST}) that a vertex $v$ can be
\emph{linked} to a triangle $\{a_1,a_2,a_3\}$ (via paths
$P_1,P_2,P_3$) when: the three paths $P_1,P_2,P_3$ are mutually
vertex-disjoint; for each $i\in\{1,2,3\}$, $a_i$ is an end of $P_i$;
for all $i,j\in\{1,2,3\}$ with $i\neq j$, $a_ia_j$ is the only edge
between $P_i$ and $P_j$; and $v$ has a neighbor in each of
$P_1,P_2,P_3$.
\begin{lemma}[(2.4) in \cite{CRST}]\label{lem:link}
In a Berge graph, if a vertex $v$ can be linked to a triangle
$\{a_1,a_2,a_3\}$, then $v$ is adjacent to at least two of
$a_1,a_2,a_3$.
\end{lemma}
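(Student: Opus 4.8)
The plan is to prove the contrapositive by a parity argument on holes. So suppose $v$ is adjacent to at most one of $a_1,a_2,a_3$; by the symmetry among the paths we may assume $v$ is non-adjacent to both $a_1$ and $a_2$. First I would dispose of the degenerate possibility that $v$ lies on one of $P_1,P_2,P_3$: if $v\in P_1$, say, then since $a_1a_2$ is the only edge between $P_1$ and $P_2$ and $a_1a_3$ the only one between $P_1$ and $P_3$, a vertex of $P_1$ with a neighbor in $P_2$ and a neighbor in $P_3$ must be $a_1$, so $v\in\{a_1,a_2,a_3\}$; being a vertex of the triangle it is then adjacent to the other two and the conclusion holds. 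Hence from now on $v$ lies on none of the three paths. Now for each $i\in\{1,2,3\}$ let $s_i$ be the neighbor of $v$ on $P_i$ closest to $a_i$ along $P_i$ (it exists since $v$ has a neighbor in $P_i$), let $R_i$ be the subpath of $P_i$ from $a_i$ to $s_i$, and write $\ell_i$ for its length. The facts to record are: $R_i$ is chordless, as a subpath of the chordless path $P_i$; the only neighbor of $v$ in $R_i$ is $s_i$, by the choice of $s_i$; the paths $R_1,R_2,R_3$ are pairwise vertex-disjoint, and for $i\neq j$ the edge $a_ia_j$ is the only edge between $R_i$ and $R_j$, both inherited from $P_1,P_2,P_3$; and whenever $v$ is non-adjacent to $a_i$ we have $s_i\neq a_i$, hence $\ell_i\geq 1$.

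The key sub-claim I would establish next is the following. If $v$ is non-adjacent to both $a_i$ and $a_j$, then the cycle that goes from $v$ to $s_i$, along $R_i$ to $a_i$, along the edge $a_ia_j$, along $R_j$ to $s_j$, and back to $v$ is a hole, and it has length $\ell_i+\ell_j+3$. If instead $v$ is non-adjacent to $a_i$ but adjacent to $a_j$ (so $s_j=a_j$), then the cycle from $v$ to $s_i$, along $R_i$ to $a_i$, then to $a_j$, then back to $v$ is a hole of length $\ell_i+3$. In both cases the vertices are pairwise distinct, the length is at least $4$, and chordlessness is immediate from the recorded facts: there is no chord inside an $R$; the only edge between the two $R$'s is the one already traversed; $v$ has no neighbor in the $R$'s besides $s_i$ and $s_j$; and $v$ misses $a_i$, and also $a_j$ in the first case. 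Since $G$ is Berge, each such hole is even, so $\ell_i+\ell_j$ is odd in the first case and $\ell_i$ is odd in the second.

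It remains to combine these. If $v$ is non-adjacent to all of $a_1,a_2,a_3$, then applying the sub-claim to the three pairs gives that $\ell_1+\ell_2$, $\ell_1+\ell_3$ and $\ell_2+\ell_3$ are all odd, so their sum $2(\ell_1+\ell_2+\ell_3)$ is odd, a contradiction. If instead $v$ is adjacent to exactly one of the three, say $a_3$, then the sub-claim in its second form (with $i\in\{1,2\}$ and $j=3$) gives that $\ell_1$ and $\ell_2$ are both odd, so $\ell_1+\ell_2$ is even, contradicting the first form of the sub-claim applied to the pair $\{1,2\}$. Either way we reach a contradiction, proving the lemma. The step I expect to need the most care is precisely this last case, where one of the three ``arms'' collapses to the single vertex $a_3$ and the holes through it must be built using the edge $va_3$ in place of a nontrivial path; the rest is the routine verification that each constructed cycle is genuinely induced and has at least four vertices, so that it really is an \emph{odd} hole and not, say, a triangle or a chorded cycle.
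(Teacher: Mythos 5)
Your proof is correct: the reduction to the case $v\notin V(P_1)\cup V(P_2)\cup V(P_3)$, the truncation to the subpaths $R_i$ ending at the neighbor of $v$ nearest $a_i$, the verification that each of the constructed cycles is an induced cycle of length at least $4$, and the final parity contradiction (three odd sums summing to an even number, or two odd $\ell_i$'s versus an odd $\ell_1+\ell_2$) are all sound. The paper imports this lemma from \cite{CRST} without proof, and your argument is essentially the standard one given there, so nothing further is needed.
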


\section{Good partitions}

In a graph $G$, a \emph{triad} is a set of three pairwise 
non-adjacent vertices.

A \emph{good partition} of a graph $G$ is a partition
$(K_1,K_2,K_3,L,R)$ of $V(G)$ such that:
\begin{itemize}
\item[(i)] 
$L$ and $R$ are not empty, and $L$ is anticomplete to $R$;
\item[(ii)] 
$K_1\cup K_2$ and $K_2\cup K_3$ are cliques;
\item[(iii)] 
  If $P=p_1-\dots-p_k$ is a path with $p_1 \in K_1$, $p_k \in K_3$, $k \geq 3$
  and  $P^* \subseteq L$, then $p_2$ is complete to $K_1$. 
\item[(iv)] 
Either $K_1$ is anticomplete to $K_3$, or for every $v \in L$
the set $N(v) \cap K_1$ is complete to $N(v) \cap K_3$;
\item[(v)] 
For some $x\in L$ and $y\in R$, there is a triad of $G$ that contains
$\{x,y\}$.
\end{itemize}

\begin{theorem}
Let $G$ be a square-free Berge graph.  If $G$ contains a prism, then
$G$ has a good partition.
\end{theorem}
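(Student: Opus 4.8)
The plan is to take a prism in $G$ and use it as the seed of a decomposition. Since $G$ is a square-free Berge graph, the prism is either even or odd, and in both cases the two triangles of the prism, together with the three connecting paths, give us a rigid structure to work with. I would first set up notation: let the triangles be $\{a_1,a_2,a_3\}$ and $\{b_1,b_2,b_3\}$, joined by paths $P_1,P_2,P_3$ with $P_i$ from $a_i$ to $b_i$. The idea is to locate a ``flat'' piece of this structure — essentially one of the paths $P_i$, or a segment of it — that separates $G$, and to let $L$ be (roughly) the interior of that piece on one side. The sets $K_1,K_2,K_3$ should be built from the attachment points: $K_2$ plays the role of the ``middle'' clique (vertices complete to the relevant part of $L$ on both sides), and $K_1,K_3$ the two ``flanks'' that $L$ sees on its two ends. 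Condition (v) — the triad containing a vertex of $L$ and a vertex of $R$ — is what ties this back to the prism: the two triangles give us room to find three pairwise non-adjacent vertices, one in $L$ (on the path we cut), one in $R$ (elsewhere in the prism), and one more.

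The core of the argument will be an analysis of how an arbitrary vertex $v\in V(G)$ attaches to the prism. Here I would lean on Lemma~\ref{lem:link} (the $(2.4)$ of \cite{CRST}): if $v$ can be linked to one of the triangles via the prism paths, then $v$ is adjacent to at least two of that triangle's vertices, which sharply limits the possible attachment patterns. Combined with square-freeness — which forbids a vertex from having two non-adjacent neighbors on a chordless path unless those neighbors are close together, and forbids many configurations of two vertices attaching to the same path — this should reduce to a bounded list of attachment types for each $v$. Lemma~\ref{C4cliques} is the tool for turning ``a vertex is complete to a union of complete-to-each-other sets, minus possibly one of them'' into the clique conditions (ii); I expect to apply it with $K$ the set of attachments of a cut-segment and the $X_i$'s the pieces of the prism that $L$ sees. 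Square-freeness also does the heavy lifting for (iii) and (iv): any chordless path through $L$ from $K_1$ to $K_3$ that dodges the complete-to-$K_1$ vertex of $L$ would, together with edges of the prism, create a square or an odd hole.

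I would organize the proof by choosing, among all prisms in $G$ and all ways of cutting them, one that is extremal in some sense — e.g., minimizing the total length of the three paths, or choosing $L$ minimal — so that every vertex outside has a controlled interaction with the chosen structure. Then: (1) show $L\neq\emptyset$ and $R\neq\emptyset$ and $L$ anticomplete to $R$, which follows from $L$ being the interior of a cut-segment and $R$ containing the rest of the prism; (2) verify (ii) via Lemma~\ref{C4cliques}; (3) verify (iii) and (iv) by the square/odd-hole argument on paths through $L$; (4) exhibit the triad for (v) using the two triangles of the prism. The main obstacle, I expect, is step (3) together with the case analysis of vertex attachments that precedes it: making ``the prism forces a flat cutset'' precise requires ruling out the many ways a vertex of $G$ could attach to several of $P_1,P_2,P_3$ simultaneously, and ensuring the chosen $L$ is genuinely separated from $R$ — this is where one usually needs the extremal choice of prism and a careful, somewhat lengthy, structural case check, possibly splitting on whether the prism is even or odd and on whether $K_1$ is anticomplete to $K_3$.
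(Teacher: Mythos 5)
Your sketch captures some of the right local tools (Lemma~\ref{lem:link} to constrain attachments, Lemma~\ref{C4cliques} to produce the cliques of condition (ii), square-freeness to force the ``major'' vertices to form a clique), and your picture of the cutset in one case --- separating the interior of one path of the prism from the rest --- is essentially the one the paper uses for even prisms. But two structural ideas without which the plan cannot be executed are absent. First, a single prism, however extremal, does not yield a cutset: a connected set in $G$ minus the prism and its major neighbors can attach to the prism non-locally, and then your $L$ is not anticomplete to your $R$. The paper's remedy is not to minimize the prism but to \emph{thicken and maximize} it into a hyperprism (strips $A_i,C_i,B_i$), and then to prove (Lemma~\ref{lem:hyplocal}, resting on (10.1) of \cite{CRST}) that every outside component either consists of major neighbors, or attaches locally, or can be used to enlarge the hyperprism, contradicting maximality. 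Maximality of the thickened object, not minimality of the prism, is what forces locality of attachments; your proposed extremal choice (``minimizing the total length of the three paths, or choosing $L$ minimal'') points in the opposite direction. The major neighbors must then be classified (Theorems~\ref{thm:ehyp} and~\ref{thm:ohyp}): each is either complete to two of the $A_i$ and two of the $B_j$ (whence Lemma~\ref{C4cliques} gives the clique conditions with $K_2=M$), or it can be absorbed into a larger hyperprism. This is most of the content of Sections 4 and 5, not a routine case check, and the odd case even needs a different cutset ($K_1=B_2\cup B_3$, $K_3=A_1\cup A_3$) in one subcase.

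Second, (10.1) of \cite{CRST} has an unavoidable outcome in which a non-local attachment produces the line-graph of a bipartite subdivision of $K_4$. There the ``cut along a path of the prism'' picture breaks down entirely: the paper must switch to $J$-strip systems for a maximal $3$-connected graph $J$, handle enlargements and the special $K_4$-strip-system degeneracy (Lemmas~\ref{SPGT85var}--\ref{specialgood}), and build the good partition from the sets $N_u\setminus N_{uv}$, $M$, $N_v\setminus N_{vu}$ at a branch vertex rather than from the prism's triangles. Your proposal does not anticipate this branch, and it cannot be folded into the prism analysis. So the gap is at the core of the argument, not in the details.
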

The proof of this theorem will be given in the following sections,
depending on the presence in $G$ of an even prism
(Theorem~\ref{thm:epr}), an odd prism (Theorem~\ref{thm:opr}), or the
line-graph of a bipartite subdivision of $K_4$
(Theorem~\ref{thm:lgb}).

\medskip

In the rest of this section we show how a good partition can be used
to find an optimal coloring of the graph.

\begin{lemma}\label{lem:cutset}
Let $G$ be a square-free Berge graph.  Suppose that $V(G)$ has a good
partition $(K_1, K_2, K_3, L, R)$.  Let $G_1=G\setminus R$ and
$G_2=G\setminus L$, and for $i=1,2$ let $c_i$ be an
$\omega(G_i)$-coloring of $G_i$.  Then an $\omega(G)$-coloring of $G$
can be obtained in polynomial time.
\end{lemma}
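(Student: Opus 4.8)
The plan is to combine the two given colorings $c_1$ of $G_1 = G \setminus R$ and $c_2$ of $G_2 = G \setminus L$ along their common part, which is $G \setminus (L \cup R)$, i.e.\ the graph induced on $K_1 \cup K_2 \cup K_3$. The natural approach is to permute the colors used by $c_2$ so that it agrees with $c_1$ on as much of $K_1 \cup K_2 \cup K_3$ as possible — in particular on the clique $K_1 \cup K_2 \cup K_3$-relevant part — and then argue that the merged coloring is proper and uses only $\omega(G)$ colors. First I would record that since $L$ is anticomplete to $R$ (good partition condition~(i)), there are no edges of $G$ between $V(G_1) \setminus (K_1\cup K_2\cup K_3)$ and $V(G_2)\setminus(K_1\cup K_2\cup K_3)$, so the only place a conflict can arise when we glue $c_1$ and $c_2$ together is on $K_1 \cup K_2 \cup K_3$, and indeed we only need the two colorings to agree there.

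The key structural input is that $K_1 \cup K_2 \cup K_3$, while not necessarily a clique, is the union of the two cliques $K_1 \cup K_2$ and $K_2 \cup K_3$ (condition~(ii)); by Lemma~\ref{C4cliques} (applied with the clique $K$ empty and $X$-parts coming from $K_1, K_3$, using squarefreeness) one expects that $\omega$ restricted to $G[K_1\cup K_2\cup K_3]$ is controlled — e.g.\ $K_1\cup K_2\cup K_3$ is "almost" a clique, missing only the non-edges between $K_1$ and $K_3$. The coloring merge should go as follows: on $G_1$ keep $c_1$; on $K_2$ (which lies in both cliques, hence is a clique, and both $c_1$ and $c_2$ assign it $|K_2|$ distinct colors) relabel $c_2$'s colors to match $c_1$; then extend the relabeling of $c_2$ so that the colors it uses on $K_1$ and on $K_3$ also line up with $c_1$ wherever an edge forces agreement. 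Because $K_1\cup K_2$ and $K_2\cup K_3$ are cliques of size at most $\omega(G)$, and by square-freeness one can show (via Lemma~\ref{C4cliques}) that the colors $c_1$ and $c_2$ use on $K_1$ versus $K_3$ can be made disjoint except on a common clique, there is enough room to realign the palettes with a single permutation of $c_2$'s color set. After the realignment, define $c(v) = c_1(v)$ for $v \in V(G_1)$ and $c(v) = c_2(v)$ for $v \in V(G_2) \setminus (K_1\cup K_2\cup K_3)$; check properness (the only cross edges are within $K_1\cup K_2\cup K_3$, where $c_1$ and $c_2$ now agree) and that the total number of colors is $\max(\omega(G_1), \omega(G_2)) \le \omega(G)$.

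The main obstacle — and where conditions~(iii), (iv), (v) of a good partition and square-freeness must really be used — is controlling the colors on $K_1$ and $K_3$ simultaneously so that one permutation of $c_2$'s palette suffices. If, say, $c_1$ uses many colors on $K_1$ and many on $K_3$ that are "entangled" (neither equal nor disjoint), a single relabeling of $c_2$ might not be able to match both. The resolution I anticipate: use Lemma~\ref{C4cliques} to show that in $G[K_1\cup K_2\cup K_3]$ there is an index $i$ so that $(K_1\cup K_2\cup K_3)\setminus K_i$ is a clique (for $i\in\{1,3\}$, with $K$ taken to be $K_2$ together with the appropriate verification of the hypotheses using condition~(iv)); then one of $K_1\cup K_2$, $K_2\cup K_3$ together with the other $K_i$ organizes the palette so that $c_1$'s colors on $K_1\setminus K_3$-side and $K_3$-side are forced to be disjoint, and the permutation can be built in two stages. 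One then also needs $\omega(G_1),\omega(G_2)\le\omega(G)$, which is immediate since $G_1,G_2$ are induced subgraphs of $G$, and conditions~(i) and~(v) only enter to guarantee the partition is nontrivial (so that this is a genuine reduction, though for this lemma's statement per se nontriviality is not strictly needed). The polynomial running time is clear since everything is a bounded number of palette permutations and edge checks.
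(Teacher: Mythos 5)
There is a genuine gap here, and it sits exactly where you flagged the ``main obstacle'': reconciling $c_1$ and $c_2$ on $K_3$ after they have been made to agree on the clique $K_1\cup K_2$. A global permutation of $c_2$'s palette (even ``built in two stages'') cannot work in general. Concretely, since $K_1$ need not be complete to $K_3$, there can be a non-adjacent pair $a\in K_1$, $u\in K_3$ with $c_1(a)=c_1(u)=1$ while $c_2(a)=1$ and $c_2(u)=2$; any permutation $\pi$ of $c_2$'s colors that fixes the agreement on $K_1\cup K_2$ must send $1\mapsto 1$, and then it cannot also send $2\mapsto 1$, so $u$ stays mismatched. Your proposed fix via Lemma~\ref{C4cliques} also does not apply as stated: that lemma needs the sets $X_i$ to be pairwise complete, and $K_1$ is not assumed complete to $K_3$, so you cannot conclude that $K_1\cup K_2\cup K_3$ is ``almost a clique.'' (Indeed, if the cutset were a clique the whole lemma would be routine; the entire difficulty is that it is only a union of two cliques.)

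The paper's proof uses a fundamentally different mechanism that your proposal is missing: Kempe-chain exchanges. It defines the set $B$ of ``bad'' vertices of $K_3$ (those where $c_1$ and $c_2$ disagree) and repeatedly swaps two colors on a connected component of a bichromatic subgraph of $G_1$ or $G_2$, showing each time that either a bad vertex is fixed without creating new ones, or else the structure of the bichromatic paths from that vertex into $K_1$ --- constrained by condition~(iii) (every relevant path through $L$ contains a vertex complete to $K_1$, which a bicolored path cannot), condition~(iv), and square-freeness --- forces an odd hole, contradicting Bergeness. This is where (iii) and (iv) really enter; they are not, as your sketch suggests, used to organize a palette permutation, and (i) is needed not just for nontriviality but to guarantee $V(P)\cup V(Q)$-type unions are actually holes. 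Without the Kempe-chain idea and the odd-hole analysis, the merge step of your argument cannot be completed.
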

\begin{proof}
We may assume (by making $K_2$ maximal) that no vertex of $K_3$ is
complete to $K_1$.  Since $K_1\cup K_2$ is a clique, by permuting
colors we may assume that $c_1(x)=c_2(x)$ holds for every vertex $x\in
K_1\cup K_2$.

Say that a vertex $u$ in $K_3$ is \emph{bad} if $c_1(u)\neq c_2(u)$,
and let $B$ be the set of bad vertices.  If $B=\emptyset$, we can
merge $c_1$ and $c_2$ into a coloring of $G$ and the lemma holds.
Therefore let us assume that $B\neq\emptyset$.  We will show that we
can produce in polynomial time a pair $(c'_1, c'_2)$ of
$\omega(G)$-colorings of $G_1$ and $G_2$, respectively, that agree on
$K_1\cup K_2$ and have strictly fewer bad vertices than $(c_1, c_2)$.
Repeating this argument at most $|B|$ times will prove the lemma.

For each $h\in\{1,2\}$ and for any two distinct colors $i$ and $j$,
let $G^{i,j}_h$ be the bipartite subgraph of $G_h$ induced by $\{v\in
V(G_h) \mid c_h(v)\in\{i,j\}\}$; and for any vertex $u\in K_3$, let
$C^{i,j}_h(u)$ be the component of $G^{i,j}_h$ that contains $u$.

Let $u\in B$, with $i=c_1(u)$ and $j=c_2(u)$.  Then $C^{i,j}_h(u) \cap
K_2=\emptyset$ for each $h\in\{1,2\}$ because $u$ is complete to
$K_2$.  Say that $u$ is \emph{free} if $C^{i,j}_h(u)\cap
K_1=\emptyset$ holds for some $h\in\{1,2\}$.  In particular $u$ is
free whenever colors $i$ and $j$ do not appear in $K_1$.
\begin{equation}\label{eq:nofree}
\mbox{We may assume that there is no free vertex.}
\end{equation}
Suppose that $u$ is a free vertex, with $C^{i,j}_1(u)\cap K_1=
\emptyset$ say.  Then we swap colors $i$ and $j$ on $C^{i,j}_1(u)$.
We obtain a coloring $c'_1$ of $G_1$ where the color of every vertex
in $K_1\cup K_2$ is unchanged, by the definition of a free vertex; so
$c'_1$ and $c_2$ agree on $K_1\cup K_2$.  For all $v\in K_3\setminus
B$ we have $c_1(v)\neq i$, because $c_1(u)=i$, and $c_1(v)\neq j$,
because $c_1(v)=c_2(v)\neq c_2(u)=j$; so the color of $v$ is
unchanged.  Moreover we have $c'_1(u)=j=c_2(u)$, so $c'_1$ and $c_2$
agree on $u$.  Hence the pair $(c'_1,c_2)$ has strictly fewer bad
vertices than $(c_1,c_2)$.  Thus (\ref{eq:nofree}) holds.

\medskip

Choose $w$ in $B$ with the largest number of neighbors in $K_1$. Then:
\begin{equation}\label{cl1}
\mbox{Every vertex $u\in B$ satisfies $N(u)\cap K_1\subseteq N(w)\cap
K_1$.}
\end{equation}
For suppose that some vertex $x\in K_1$ is adjacent to $u$ and not to
$w$.  By the choice of $w$ there is a vertex $y\in K_1$ that is
adjacent to $w$ and not to $u$.  Then $\{x, y, u, w\}$ induces a
square, a contradiction.  Thus (\ref{cl1}) holds.

\medskip

Up to relabelling, let $c_1(w)=1$ and $c_2(w)=2$.  By
(\ref{eq:nofree}) $w$ is not a free vertex, so $C_1^{1,2}(w)\cap
K_1\neq\emptyset$ and $C_2^{1,2}(w)\cap K_1\neq\emptyset$.  Hence for
some $i\in\{1,2\}$ there is a  path
$P=w$-$p_1$-$\cdots$-$p_k$-$a$ in $C^{1,2}_1(w)$, with $k\ge 1$,
$p_1\in K_3\cup L$, $p_2, \ldots, p_k\in L$ and $a\in K_1$ with
$c_1(a)=i$; and for some $i'\in\{1,2\}$ there is a  path
$Q=w$-$q_1$-$\cdots$-$q_\ell$-$a'$ in $C^{1, 2}_2(w)$, with $\ell\ge
1$, $q_1\in K_3\cup R$, $q_2, \ldots, q_\ell\in R$ and $a'\in K_1$
with $c_2(a')={i'}$.  It follows that at least one of the colors $1$
and $2$ appears in $K_1$.  We claim that:
\begin{equation}\label{a1a2}
\mbox{Exactly one of the colors $1$ and $2$ appears in $K_1$.}
\end{equation}
For suppose that there are vertices $a_1,a_2\in K_1$ with $c_1(a_1)=1$
and $c_1(a_2)=2$.  We know that $w$ is anticomplete to $\{a_1,a_2\}$.
Since $P$ is bicolored by $c_1$, it cannot contain a vertex complete
to $\{a_1,a_2\}$; so, by assumption (iii), $P$ does not meet $L$.
This implies that $P=w$-$p_1$-$a_1$ and $p_1\in K_3$.  Then
$c_2(p_1)\neq 2$, because $c_2(w)=2$, and so $p_1\in B$; but then
(\ref{cl1}) is contradicted since $w$ is non-adjacent to $a_1$.  Thus
(\ref{a1a2}) holds.

\medskip

By (\ref{a1a2}) we have $i=i'$ and $a=a'$.  Let $j=3-i$.  Note that if
$i=1$ then $P$ has even length and $Q$ has odd length, and if $i=2$
then $P$ has odd length and $Q$ has even length.  So $P$ and $Q$ have
different parities.  If $p_1\in L$ and $q_1\in R$, then $V(P)\cup
V(Q)$ induces an odd hole, a contradiction.  Hence,
\begin{equation}\label{p1orq1inK3}
\mbox{At least one of $p_1$ and $q_1$ is in $K_3$.}
\end{equation}  
We claim that:
\begin{equation}\label{eq:nob21}
\mbox{There is no vertex $y$ in $K_3$ such that $c_1(y)=2$ and
$c_2(y)=1$.}
\end{equation}
Suppose that there is such a vertex $y$.  If $p_1\in K_3$ and $q_1\in
K_3$, then $p_1=y=q_1$ and $(V(P)\cup V(Q))\setminus\{w\}$ induces an
odd hole, a contradiction.  So, by (\ref{p1orq1inK3}), exactly one of
$p_1$ and $q_1$ is in $K_3$.  Suppose that $p_1\in K_3$ and $q_1\in
R$.  So $p_1=y$, and in particular $p_1a$ is not an edge.  If $p_1$
has no neighbor on $Q\setminus w$, then $V(P)\cup V(Q)$ induces an odd
hole.  So suppose that $p_1$ has a neighbor on $Q\setminus w$.  Then
there is a path $Q'$ from $p_1$ to $a'$ with interior in $Q\setminus
w$, and since it is bicolored by $c_2$ the parity of $Q'$ is different
from the parity of $Q$.  Then $(V(P)\setminus\{w\})\cup V(Q')$ induces
an odd hole, a contradiction.  When $p_1\in L$ and $q_1\in K_3$ the
proof is similar.  Thus (\ref{eq:nob21}) holds.

\begin{equation}\label{eq:nob2} 
p_1\notin K_3.
\end{equation}
For suppose that $p_1\in K_3$.  We have $c_2(p_1)\neq 1$ by
(\ref{eq:nob21}) and $c_2(p_1)\neq 2$ because $c_2(w)=2$.  Hence let
$c_2(p_1)=3$.  So color~$3$ does not appear in $K_2$.  Note that every
vertex in $K_3\setminus B$ has its color different from $1,2,3$
because of $w$ and $p_1$.

Suppose that color~$3$ does not appear in $K_1$.  Then, by
(\ref{a1a2}), $C^{j,3}_2(p_1)\cap K_1=\emptyset$.  We swap colors $j$
and $3$ on $C^{j,3}_2(p_1)$.  We obtain a coloring $c_2'$ of $G_2$
such that the color of all vertices in $K_1\cup K_2$ is unchanged, so
$c'_2$ agrees with $c_1$ on $K_1\cup K_2$.  For every vertex $v$ in
$K_3\setminus B$ we have $c_2(v)\neq 3$, because $c_2(p_1)=3$, and
$c_2(v)\notin \{1,2\}$, because $c_2(v)=c_1(v)$ and $\{1,2\}=
\{c_1(w),c_1(p_1)\}$; so $c'_2(v)=c_2(v)$.  Moreover, $c_2'(p_1)=j$.
If $j=1$, then $c'_2(w)=c_2(w)$ and the pair $(c_1, c_2')$,
contradicts (\ref{eq:nob21}) (with $y=p_1$).  If $j=2$, then
$c'_2(p_1)= c_1(p_1)$, so the pair $(c_1,c_2')$ has strictly fewer bad
vertices than $(c_1,c_2)$.  Therefore we may assume that there is a
vertex $a_3$ in $K_1$ with $c_1(a_3)=3$.

Vertex $p_1$ is not adjacent to $a_3$ because $c_2(p_1)=c_2(a_3)$, and
$p_1$ is not adjacent to $a$ by (\ref{cl1}) and because $w$ is not
adjacent to $a$.  This implies $k\ge 2$, so the path $P\setminus w$
meets $L$.  Assumption (iii) implies that $P\setminus w$ contains a
vertex that is complete to $\{a,a_3\}$, and since $P$ is a path,
that vertex is~$p_k$.

Suppose that $a_3$ has a neighbor $p_g$ on $P\setminus\{w,p_k\}$, and
choose the smallest such integer $g$.  We know that $g\ge 2$.  The
path $p_1$-$\cdots$-$p_g$-$a_3$ meets $L$, but it contains
no vertex that is complete to $\{a,a_3\}$ because $a$ has no neighbor
on $P\setminus p_k$, so assumption (iii) is contradicted.  Therefore
$a_3$ has no neighbor on $P\setminus \{w,p_k\}$.

Suppose that $i=1$.  Then $P$ has even length, and by (\ref{a1a2})
color $2$ does not appear in $K_1$.  If $w$ is adjacent to $a_3$, then
since $k\geq 2$, we see that $(V(P)\setminus \{a\})\cup \{a_3\}$
induces an odd hole.  So $w$ is non-adjacent to $a_3$.  Hence
$\{a,a_3\}$ is anticomplete to $\{w,p_1\}$.  Since, by
(\ref{eq:nofree}), $p_1$ is not a free vertex, and color~$2$ does not
appear in $K_1$, there is a path $S$ between $p_1$ and $a_3$
in $C^{2,3}_2(p_1)$, and $S$ has even length because $c_2(p_1)=
c_2(a_3)$.  If $w\in V(S)$, then $V(S)\cup\{p_1,\ldots,p_k\}$ induces
an odd hole.  If $w\notin V(S)$, then the interior of $S$ is in $R$,
and $V(S)\cup\{p_2,\ldots,p_k\}$ induces an odd hole.

Now suppose that $i=2$.  By (\ref{eq:nofree}), $p_1$ is not a free
vertex, so there is a path $T$ from $p_1$ to $\{a, a_3\}$ in
$C^{2,3}_1(p_1)$.  Since $T$ is bicolored by $c_1$ it cannot contain a
vertex that is complete to $\{a,a_3\}$, so assumption (iii) implies
that $T$ does not meet $L$.  So we have $T=p_1$-$x$-$a$ for some
vertex $x$ in $K_3$ with $c_1(x)=3$.  We have $c_2(x)\neq 3$ because
$c_2(p_1)=3$; so $x\in B$.  But the fact that $a$ is adjacent to $x$
and not to $w$ contradicts (\ref{cl1}).  Thus (\ref{eq:nob2}) holds.

\medskip

By (\ref{p1orq1inK3}) and (\ref{eq:nob2}) we have $p_1\not\in K_3$ and
$q_1\in K_3$.  In particular, $P$ meets $L$.  We have $c_1(q_1)\neq 1$
because $c_1(w)=1$, and $c_1(q_1)\neq 2$ by (\ref{eq:nob21}).  Hence
let $c_1(q_1)=3$.  Note that every vertex in $K_3\setminus B$ has its
color different from $1,2,3$ because of $w$ and $q_1$.
\begin{equation}\label{eq:c3inK1}
\mbox{Color~$3$ appears in $K_1$.}
\end{equation}
Assume the contrary.  Then, by (\ref{a1a2}), $C^{j,3}_1(q_1)\cap
K_1=\emptyset$.  We swap colors $j$ and $3$ on $C^{j,3}_1(q_1)$.  We
obtain a coloring $c_1'$ of $G_1$ such that the color of every vertex
in $K_1\cup K_2$ is unchanged, so $c'_1$ agrees with $c_2$ on $K_1\cup
K_2$.  Also every vertex $v$ in $K_3\setminus B$ satisfies $c'_1(v)=
c_1(v)$.  Moreover, $c_1'(q_1)=j$.  If $j=1$, then $c_1'(q_1)=
c_2(q_1)$, so the pair $(c_1',c_2)$ has strictly fewer bad vertices
than $(c_1,c_2)$.  If $j=2$, then $c'_1(w)=c_1(w)=1$, so the pair
$(c_1',c_2)$ contradicts (\ref{eq:nob21}) (with $y=q_1$).  Thus we may
assume that (\ref{eq:c3inK1}) holds.

\medskip

By (\ref{eq:c3inK1}) there is a vertex $a_3$ in $K_1$ with $c_1(a_3)=
3$.  By (\ref{cl1}), $q_1$ is anticomplete to $\{a,a_3\}$.  Vertex
$q_1$ has a neighbor in $P\setminus w$, for otherwise $V(P)\cup V(Q)$
induces an odd hole.  So there is a path $P'$ from $q_1$ to
$a$ with interior in $P\setminus w$, and $P'$ meets $L$ because it
contains $p_k$.  By assumption (iii), $P'$ contains a vertex that is
complete to $\{a,a_3\}$, and since $P$ is a path  that vertex is
$p_k$.

Suppose that $C^{i,3}_1(q_1)\cap K_1=\emptyset$.  Then we swap colors
$i$ and $3$ on $C^{i,3}_1(q_1)$.  We obtain a coloring $c_1'$ of $G_1$
such that the color of every vertex in $K_1\cup K_2$ is unchanged, so
$c'_1$ agrees with $c_2$ on $K_1\cup K_2$.  Also every vertex $v$ in
$K_3\setminus B$ satisfies $c'_1(v)=c_1(v)$.  Moreover, $c_1'(q_1)=i$.
If $i=1$, the pair $(c_1',c_2)$ has strictly fewer bad vertices than
$(c_1, c_2)$.  If $i=2$, then $c'_1(w)=c_1(w)=1$, so $(c_1',c_2)$
contradicts (\ref{eq:nob21}) (with $y=q_1$).  Therefore we may assume
that $C^{i,3}_1(q_1)\cap K_1\neq \emptyset$.

Let $Z$ be a path from $q_1$ to $\{a,a_3\}$ in $C^{i,3}_1(q_1)$.
Since $Z$ is bicolored by $c_1$, no vertex of $Z$ can be complete to
$\{a,a_3\}$, and so assumption (iii) implies that $Z$ does not meet
$L$.  This means that either $i=1$ and $Z=q_1$-$w$-$a_3$, or $i=2$ and
$Z=q_1$-$z$-$a_3$ for some $z$ in $K_3$ with $c_1(z)=2$.  In either
case, $K_1$ is not anticomplete to $K_3$.  Since $p_k$ is complete to
$K_1$, and no vertex of $K_3$ is complete to $K_1$, assumption (iv)
implies that $p_k$ is anticomplete to $K_3$.  In particular, $p_k$ is
non-adjacent to~$q_1$ and $k\geq 2$.

If $a_3$ has a neighbor in $P\setminus\{w,p_k\}$, then (since $q_1$
also has a neighbor in $P\setminus\{w,p_k\}$) there is a 
path from $q_1$ to $a_3$ with interior in $V(P)\setminus\{w,p_k\}$,
so, by (iii), that path must contain a vertex that is complete to
$\{a,a_3\}$; but this is impossible because $a$ has no neighbor in
$P\setminus p_k$.  So $a_3$ has no neighbor in $P\setminus\{w,p_k\}$.

Now if $i=1$, then $w$ is adjacent to $a_3$, and $P$ has even length,
hence $(V(P)\setminus\{a\}) \cup \{a_3\}$ induces an odd hole.  So
$i=2$, and $Z=q_1$-$z$-$a_3$ with $z\in K_3$ and $c_1(z)=2$.  Recall
that $p_k$ is non-adjacent to $z$.  The path $z$-$w$-$P$-$p_k$ has odd
length, and it is bicolored by $c_1$, so it contains an odd
path $P''$ from $z$ to $p_k$.  But then $V(P'')\cup\{a_3\}$ induces an
odd hole.  This completes the proof.
\end{proof}

\section{Prisms and hyperprisms}

In a graph $G$ let $R_1,R_2,R_3$ be three  paths that form a
prism $K$ with triangles $\{a_1, a_2, a_3\}$ and $\{b_1, b_2, b_3\}$,
where each $R_i$ has ends $a_i$ and $b_i$.  A vertex of $V(G)\setminus
K$ is a \emph{major neighbor} of $K$ if it has at least two neighbors
in $\{a_1,a_2,a_3\}$ and at least two neighbors in $\{b_1,b_2,b_3\}$.
A subset $X$ of $V(K)$ is \emph{local} if either
$X\subseteq\{a_1,a_2,a_3\}$ or $X\subseteq\{b_1,b_2,b_3\}$ or
$X\subseteq V(R_i)$ for some $i\in\{1,2,3\}$.

If $F,K$ are induced subgraphs of $G$ with $V(F)\cap V(K)=\emptyset$,
any vertex in $K$ that has a neighbor in $F$ is called an
\emph{attachment} of $F$ in $K$, and whenever any such vertex exists
we say that $F$ \emph{attaches} to $K$.

Here are several theorems from the Strong Perfect Graph
Theorem~\cite{CRST} that we will use.

\begin{theorem}[(7.4) in \cite{CRST}]\label{spgt74}
In a Berge graph $G$, let $R_1,R_2,R_3$ be three  paths, of
even lengths, that form a prism $K$ with triangles $\{a_1, a_2, a_3\}$
and $\{b_1, b_2, b_3\}$, where each $R_i$ has ends $a_i$ and $b_i$.
Assume that $R_1, R_2, R_3$ all have length at least $2$.  Let $R'_1$
be a  path from $a'_1$ to $b_1$, such that $R'_1, R_2, R_3$
also form a prism.  Let $y$ be a major neighbor of $K$.  Then $y$ has
at least two neighbors in $\{a'_1, a_2, a_3\}$.
\end{theorem}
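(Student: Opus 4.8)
The plan is to argue by contradiction: assume $y$ has at most one neighbour in $\{a'_1,a_2,a_3\}$, and derive either a forbidden odd hole or a configuration contradicting Lemma~\ref{lem:link}. First I reduce the situation. Since $y$ is a major neighbour of $K$ it has at least two neighbours in $\{a_1,a_2,a_3\}$, hence at least one of $a_2,a_3$; together with the assumption this forces $y$ to have exactly one of $a_2,a_3$ as a neighbour, to be non-adjacent to $a'_1$, and — in order still to have two neighbours in $\{a_1,a_2,a_3\}$ — to be adjacent to $a_1$. After relabelling $2$ and $3$ (the hypothesis is symmetric in them) I may assume $y\sim a_1$, $y\sim a_2$, $y\not\sim a_3$ and $y\not\sim a'_1$; the goal is now a contradiction.

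Next I pin down the neighbours of $y$ among $b_1,b_2,b_3$, using that $R_1,R_2,R_3$ have even length. The engine is a transfer statement for this even prism: if $y\sim a_i$ and $y\sim b_j$ with $i\ne j$, then $y\sim b_i$ (and symmetrically $y\sim a_j$). To see this, suppose $y\not\sim b_i$ and let $e$ be the neighbour of $y$ on $R_i$ closest to $b_i$; then $y$-$e$-$(R_i\text{ from }e\text{ to }b_i)$-$b_i$-$b_j$-$y$ is a chordless cycle, and since the part of $R_i$ between $a_i$ and $e$ has even length (consecutive neighbours of $y$ along a path of the prism lie at even distance — the step needing a little care, discussed below) while $R_i$ has even length, this cycle has odd length, contradicting that $G$ is Berge. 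Applying transfer with $v=y$: if $y\not\sim b_1$ its two neighbours among the $b_i$ are $b_2,b_3$, and transfer from $a_2$ and $b_3$ forces $y\sim a_3$, a contradiction; so $y\sim b_1$; transfer from $a_2$ and $b_1$ gives $y\sim b_2$; and if moreover $y\sim b_3$, transfer from $a_2$ and $b_3$ again forces $y\sim a_3$. Hence $y\sim a_1,a_2,b_1,b_2$ and $y\not\sim a_3,b_3,a'_1$.

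Finally I eliminate $R_3$ and then $R'_1$. If $y$ had a neighbour $d$ in the interior of $R_3$, then with $P_2=\{a_2\}$, with $P_3$ the segment of $R_3$ from $a_3$ to $d$, and with $P_1$ the segment of $R'_1$ from $a'_1$ up to the first neighbour of $y$ on $R'_1$ (which exists, since $b_1\in R'_1$), one checks that $y$ is linked to the triangle $\{a'_1,a_2,a_3\}$: the only non-triangle edge that could occur between these paths is $b_1b_3$, and it does not because $b_3\notin P_3$; this contradicts Lemma~\ref{lem:link}, as $a_2$ is the only neighbour of $y$ in that triangle. So $y$ is anticomplete to $R_3$. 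Now, if the only neighbour of $y$ on $R'_1$ is $b_1$, then $y$-$b_1$-$R'_1$-$a'_1$-$a_2$-$y$ is a hole of odd length (because $R'_1$ has even length), a contradiction. Otherwise $y$ has a neighbour $c$ in the interior of $R'_1$; taking $c$ closest to $a'_1$, the hole $y$-$c$-$(R'_1\text{ from }c\text{ to }a'_1)$-$a'_1$-$a_2$-$y$ shows that the subpath of $R'_1$ from $a'_1$ to $c$ has odd length, and then $y$-$c$-$(R'_1\text{ from }c\text{ to }a'_1)$-$a'_1$-$a_3$-$R_3$-$b_3$-$b_1$-$y$ is a hole (using that $y$ is anticomplete to $R_3$, the triangle edges $a'_1a_3$ and $b_1b_3$, and $c\not\sim b_1$) of odd length — a contradiction. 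The single residual configuration, $c\sim b_1$ (so the neighbours of $y$ on $R'_1$ are exactly $b_1$ and the second-to-last vertex of $R'_1$), is dispatched by a similar but more delicate odd-hole argument, or by first choosing $R'_1$ so as to avoid $y$ in its interior.

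The crux is this last step: once the two prisms $K$ and $K'$ are forced to share $R_2,R_3$ and the triangle $\{b_1,b_2,b_3\}$, the whole argument turns on locating the neighbours of $y$ on $R'_1$ and on $R_3$, because that is what decides whether a linkage becomes available (just one triangle edge short of making Lemma~\ref{lem:link} apply) or an odd hole of the correct parity can be exhibited; the even-length hypothesis on $R_1,R_2,R_3$ is exactly what makes those parities come out right. The secondary point needing its own justification is the claim used in the transfer step, namely that a major neighbour of the prism has no two consecutive neighbours along any $R_i$ (equivalently, that its neighbours on each prism path sit at mutually even distances).
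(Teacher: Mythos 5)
This statement is imported from \cite{CRST} and the paper gives no proof of it, so your argument has to stand on its own --- and it has a genuine gap at its core. The ``transfer'' lemma (if $y\sim a_i$ and $y\sim b_j$ with $i\neq j$ then $y\sim b_i$) is false, as is the supporting claim you defer to the end (that the neighbours of a major vertex on each $R_i$ sit at mutually even distances, i.e.\ that it has no two consecutive neighbours on a rung). Concretely, take the prism with rungs $a_i$-$m_i$-$b_i$ of length $2$ for $i=1,2,3$, and add a vertex $y$ adjacent exactly to $a_1,m_1,a_2,a_3,b_2,b_3$. The non-neighbours of $y$, namely $b_1,m_2,m_3$, form an independent set, so every hole through $y$ has length $4$; the prism itself contains only $6$-holes, and degree counting rules out antiholes of length at least $7$. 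Hence this graph is Berge, $y$ is major, yet $y$ is adjacent to the consecutive pair $a_1,m_1$ and satisfies $y\sim a_1$, $y\sim b_2$, $y\not\sim b_1$. The correct conclusion from your cycle $y$-$e$-$R_i$-$b_i$-$b_j$-$y$ is only that $d_{R_i}(e,b_i)$ is odd (when this distance is $1$ the cycle is a square and nothing is contradicted); that is, $y$ has a neighbour on $R_i$ at odd distance from $b_i$, not $y\sim b_i$.

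This invalidates the pivotal intermediate claim ``$y\sim a_1,a_2,b_1,b_2$ and $y\not\sim a_3,b_3$''. Indeed, the variant in which $y$ is adjacent exactly to $a_1,m_1,a_2,m_3,b_2,b_3$ is again Berge (the non-neighbours $a_3,b_1,m_2$ are independent), and there $y$ is major with $y\sim a_1,a_2$ and $y\not\sim a_3$ --- precisely your reduced situation --- yet $y\not\sim b_1$ and $y\sim b_3$. So the adjacencies of $y$ to $b_1,b_2,b_3$ cannot be pinned down from the prism $K$ alone; a correct proof must already invoke $R'_1$ (and the interior neighbours on the rungs that your odd-cycle arguments actually force) at this stage, which your argument does not do. Two further loose ends: the residual case $c\sim b_1$ at the very end is only asserted to be ``dispatched by a similar but more delicate argument'', and ``first choosing $R'_1$ so as to avoid $y$ in its interior'' is not an available move since $R'_1$ is given in the hypothesis; and the deferred even-distance claim is never proved and, as shown, is false. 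The overall architecture --- reduce to $y\sim a_1,a_2$, $y\not\sim a_3,a'_1$, then locate the neighbours of $y$ on $R_3$ and $R'_1$ and exhibit either an odd hole or a linkage contradicting Lemma~\ref{lem:link} --- is sensible and the endgame is largely sound, but the middle of the proof, where the real work lies, does not hold up.
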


\begin{theorem}[(10.1) in \cite{CRST}]\label{spgt101}
In a Berge graph $G$, let $R_1,R_2,R_3$ be three  paths that
form a prism $K$ with triangles $\{a_1, a_2, a_3\}$ and $\{b_1, b_2,
b_3\}$, where each $R_i$ has ends $a_i$ and $b_i$.  Let $F \subseteq
V(G) \setminus V(K)$ be connected, such that its set of attachments in
$K$ is not local.  Assume no vertex in $F$ is major with respect to
$K$.  Then there is a path $f_1$-$\ldots$-$f_n$ in $F$ with $n \ge 1$,
such that (up to symmetry) either:
\begin{itemize}
\item[1.]  
$f_1$ has two adjacent neighbors in $R_1$, and $f_n$ has two adjacent
neighbors in $R_2$, and there are no other edges between $\{f_1,
\ldots , f_n\}$ and $V(K)$, and (therefore) $G$ has an induced
subgraph which is the line graph of a bipartite subdivision of $K_4$,
or 
\item[2.]  
$n \ge 2$, $f_1$ is adjacent to $a_1, a_2, a_3$, and $f_n$ is adjacent
to $b_1, b_2, b_3$, and there are no other edges between $\{f_1,
\ldots, f_n\}$ and $V(K)$, or  
\item[3.]  
$n \ge 2$, $f_1$ is adjacent to $a_1, a_2$, and $f_n$ is adjacent to
$b_1, b_2$, and there are no other edges between $\{f_1, \ldots,
f_n\}$ and $V(K)$, or 
\item[4.] 
$f_1$ is adjacent to $a_1, a_2$, and there is at least one edge
between $f_n$ and $V(R_3) \setminus \{a_3\}$, and there are no other
edges between $\{f_1, \ldots, f_n\}$ and $V(K) \setminus \{a_3\}$.
\end{itemize}
\end{theorem}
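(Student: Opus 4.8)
The plan is to pass to a smallest possible piece of $F$ and then read off the structure from its two ends.

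\emph{Reduction.} Replace $F$ by a subset $F_0\subseteq F$ that is connected, whose set of attachments in $K$ is not local, and that is inclusion-minimal with these two properties; this is harmless, as no induced subgraph of $F$ has a major vertex with respect to $K$. I first claim that $F_0$ induces a (chordless) path. Suppose not. Since a connected graph has at least two vertices whose deletion leaves it connected, and exactly two only when it is a path, $F_0$ has three distinct such vertices $v_1,v_2,v_3$. For each $i$, the graph $F_0\setminus v_i$ is a connected proper subgraph, so by minimality its attachments lie in a single maximal local set $S_i\in\{\{a_1,a_2,a_3\},\{b_1,b_2,b_3\},V(R_1),V(R_2),V(R_3)\}$; moreover $v_i$ has a neighbour $t_i$ in $V(K)\setminus S_i$, for otherwise all attachments of $F_0$ would lie in $S_i$. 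Because $v_i$ lies in $F_0\setminus v_j$ whenever $j\neq i$, its neighbour $t_i$ is an attachment of $F_0\setminus v_j$, hence $t_i\in S_j$; so $S_1,S_2,S_3$ pairwise intersect, and they are pairwise distinct since $t_i\in S_j\setminus S_i$. But there is no family of three pairwise intersecting maximal local sets, because $V(R_i)\cap V(R_j)=\emptyset$ for $i\neq j$ and $\{a_1,a_2,a_3\}\cap\{b_1,b_2,b_3\}=\emptyset$. This contradiction shows $F_0=f_1$-$\cdots$-$f_n$ is a chordless path.

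\emph{Analysing the ends.} If $n=1$, then $f_1$ has a non-local neighbourhood in $K$ and is not major, and a short check using the linking lemma (Lemma~\ref{lem:link}) and the absence of odd holes shows that $f_1$ realises outcome~1 (or, degenerately, outcome~4). Now let $n\geq2$. By minimality there is a maximal local set $S$ containing every attachment of $f_2$-$\cdots$-$f_n$ and a maximal local set $S'$ containing every attachment of $f_1$-$\cdots$-$f_{n-1}$; consequently every internal vertex $f_i$ attaches only inside $S\cap S'$, while $f_1$ has a neighbour outside $S$ and $f_n$ has a neighbour outside $S'$ (otherwise all attachments of $F_0$ would be local). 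One then goes through the possibilities for the pair $(S,S')$ and the locations of these ``escaping'' neighbours. Three ingredients cut the list down: the no-major-vertex hypothesis forbids an end from meeting two of the $a_i$ and two of the $b_i$; the linking lemma constrains an end having a neighbour in each of $R_1,R_2,R_3$; and the Berge condition eliminates every configuration in which a sub-path of $f_1$-$\cdots$-$f_n$ together with a sub-path of $K$ closes into an odd hole, or --- when an end has several neighbours in one triangle together with a nearby interior neighbour --- into an odd antihole. After trimming the path to the portion between its first and last genuine attachments (re-invoking minimality), the surviving patterns are precisely: both ends seeing a full triangle (outcome~2); each end seeing two vertices of one triangle (outcome~3); one end seeing $a_1,a_2$ while the other reaches $V(R_3)\setminus\{a_3\}$, with a possible extra attachment at $a_3$ (outcome~4); and a ``jump'' between the interiors of two distinct paths $R_i$, which forces an induced line graph of a bipartite subdivision of $K_4$ (outcome~1).

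\emph{Main obstacle.} The real work is this last case analysis. It must be organised by whether each permitted attachment sits at a corner $a_i$ or $b_i$ or in the interior of some $R_i$, by whether the two ends attach to the same triangle or to different paths, and it must separately handle the degenerate prisms in which some $R_i$ has length $0$ or $1$, where ``interior'' attachments collapse onto corners. In each branch one has to exhibit an odd hole, an odd antihole, or a major vertex; the routine but indispensable auxiliary fact is that a shortest path between two vertices through a given set of vertices is chordless, so that odd holes appear as soon as the lengths and parities along $K$ and along $f_1$-$\cdots$-$f_n$ are tracked.
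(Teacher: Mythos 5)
This statement is not proved in the paper at all: it is quoted verbatim as (10.1) of \cite{CRST}, so the only ``proof'' here is a citation, and your attempt has to be measured against the original argument in that reference, which is a multi-page case analysis. Your reduction step is sound and is essentially the standard opening move: passing to an inclusion-minimal connected $F_0$ with non-local attachment set, and using the fact that a connected graph other than a path has three non-cut vertices together with the observation that no three of the five maximal local sets $\{a_1,a_2,a_3\}$, $\{b_1,b_2,b_3\}$, $V(R_1)$, $V(R_2)$, $V(R_3)$ pairwise intersect, to conclude that $F_0$ induces a chordless path whose interior attaches only inside $S\cap S'$. That part I would accept (with the small caveat that the maximal local set containing the attachments of $F_0\setminus v_i$ need not be unique, so you should say ``some'' rather than ``a single'').

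The genuine gap is that everything after the reduction is a plan rather than a proof. The entire content of the theorem is the classification of how the two ends $f_1$ and $f_n$ can attach, and you dispose of it with ``a short check \dots shows that $f_1$ realises outcome~1'', ``one then goes through the possibilities'', and ``the surviving patterns are precisely \dots''. None of the eliminations is actually carried out: you do not exhibit a single odd hole, odd antihole, or major vertex in any forbidden configuration, nor do you verify that the configurations you keep really satisfy the strong conclusions of the four outcomes (e.g.\ that in outcome~1 there are \emph{no other} edges to $V(K)$, or that in outcome~4 extra edges are permitted only to $a_3$). Even the $n=1$ case, which you call ``a short check'', requires an argument (a single non-major vertex with non-local attachment set adjacent, say, to $a_1,a_2,a_3$ and $b_1$ must be ruled out, and this is not immediate from Lemma~\ref{lem:link} alone). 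Your ``main obstacle'' paragraph correctly identifies where the work lies --- organising the cases by corner versus interior attachments, same triangle versus different paths, and the degenerate short rungs --- but identifying the obstacle is not the same as overcoming it. As written, the proposal establishes only the preliminary structural reduction and asserts the theorem's conclusion for the reduced object.
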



\subsubsection*{Hyperprisms}
A \emph{hyperprism} is a graph $H$ whose vertex-set can be partitioned
into nine~sets:
\begin{center}
\begin{tabular}{ccc}
$A_1$ & $C_1$ & $B_1$ \\ 
$A_2$ & $C_2$ & $B_2$ \\ 
$A_3$ & $C_3$ & $B_3$
\end{tabular}
\end{center}
with the following properties:
\begin{itemize}
\item
Each of $A_1,A_2,A_3,B_1,B_2,B_3$ is non-empty.
\item
For distinct $i,j\in\{1,2,3\}$, $A_i$ is complete to $A_j$, and $B_i$
is complete to $B_j$, and there are no other edges between $A_i \cup
B_i \cup C_i$ and $A_j \cup B_j \cup C_j$.
\item
For each $i\in\{1,2,3\}$, every vertex of $A_i \cup B_i \cup C_i$
belongs to a  path between $A_i$ and $B_i$ with interior in
$C_i$.
\end{itemize}
For each $i\in\{1,2,3\}$, any  path from $A_i$ to $B_i$ with
interior in $C_i$ is called an \emph{$i$-rung}.  The triple $(A_i,
C_i, B_i)$ is called a \emph{strip} of the hyperprism.  If we pick any
$i$-rung $R_i$ for each $i\in\{1,2,3\}$, we see that $R_1, R_2, R_3$
form a prism; any such prism is called an \emph{instance} of the
hyperprism.  If $H$ contains no odd hole, it is easy to see that all
rungs have the same parity; then the hyperprism is called even or odd
accordingly.  Note that if $H$ is an even hyperprism, then $A_i$ is
anticomplete to $B_i$ for each $i$.  On the other hand, if $H$ is an
odd hyperprism, there may be edges between $A_i$ and $B_i$ for any
$i$; however, if $H$ is square-free there is at most one integer $i$
such that there is an edge between $A_i$ and $B_i$.

Let $G$ be a graph that contains a prism.  Then $G$ contains a
hyperprism $H$.  Let $(A_1, \ldots, B_3)$ be a partition of $V(H)$ as
in the definition of a hyperprism above.  A subset $X \subseteq V(H)$
is \emph{local} if either $X\subseteq A_1\cup A_2\cup A_3$ or
$X\subseteq B_1\cup B_2\cup B_3$ or $X\subseteq A_i\cup B_i\cup C_i$
for some $i\in\{1,2,3\}$.  A vertex $x$ in $V(G)\setminus V(H)$ is a
\emph{major neighbor} of $H$ if $x$ is a major neighbor of some
instance of $H$.  The hyperprism $H$ is \emph{maximal} if there is no
hyperprism $H'$ such that $V(H)$ is strictly included in $V(H')$.

\begin{lemma}\label{lem:hyplocal}
Let $G$ be a Berge graph, let $H$ be a hyperprism in $G$, and let $M$
be the set of major neighbors of $H$ in $G$.  Let $F$ be a component
of $G\setminus (V(H)\cup M)$ such that the set of attachments of $F$
in $H$ is not local.  Then one can find in polynomial time one of the
following:
\begin{itemize}
\item 
A  path $P$, with $\emptyset\neq V(P)\subseteq V(F)$, such
that $V(H)\cup V(P)$ induces a hyperprism (of the same parity as $H$).
\item 
A  path $P$, with $\emptyset\neq V(P)\subseteq V(F)$, and for
each $i\in\{1,2,3\}$ an $i$-rung $R_i$ of $H$, such that $V(P)\cup
V(R_1)\cup V(R_2)\cup V(R_3)$ induces the line-graph of a bipartite
subdivision of $K_4$.
\end{itemize}
\end{lemma}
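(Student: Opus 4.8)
The plan is to reduce Lemma~\ref{lem:hyplocal} to Theorem~\ref{spgt101} applied to a carefully chosen instance $K$ of the hyperprism $H$, and then to upgrade the path obtained from that theorem into either a bigger hyperprism or the desired line-graph of a bipartite subdivision of $K_4$.

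First I would fix a partition $(A_1,\dots,B_3)$ of $V(H)$ and pick, for each $i$, an $i$-rung $R_i$, so that $K=R_1\cup R_2\cup R_3$ is an instance of $H$ with triangles $\{a_1,a_2,a_3\}$ and $\{b_1,b_2,b_3\}$. The component $F$ of $G\setminus(V(H)\cup M)$ has, by hypothesis, a non-local set of attachments in $H$; the first technical point is that we may choose the rungs $R_1,R_2,R_3$ so that the set of attachments of $F$ in $K$ is also non-local (if $F$ attaches to two different strips, any choice works; if it attaches only inside some $A_i\cup B_i\cup C_i$, then because the attachments in $H$ are non-local it must reach both $A_i$ and $B_i$, or reach $C_i$ at a vertex not on the chosen rung, and we can reroute $R_i$ to pass through such an attachment). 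By definition of $M$, no vertex of $F$ is a major neighbor of $K$. So Theorem~\ref{spgt101} applies to $F$ and $K$, and we obtain a path $f_1$-$\cdots$-$f_n$ in $F$ satisfying one of the four alternatives, and this path can be found in polynomial time (enumerate instances and run the algorithmic content of (10.1)).

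Next I would take $P=f_1$-$\cdots$-$f_n$ and analyse the four outcomes. In outcome~1, $f_1$ has two adjacent neighbors in $R_1$, $f_n$ has two adjacent neighbors in $R_2$, and there are no other edges to $V(K)$; here $P\cup R_1\cup R_2\cup R_3$ is exactly the line-graph of a bipartite subdivision of $K_4$ (this is the standard identification used in~\cite{CRST}), giving the second conclusion of the lemma. In outcomes~2 and~3, $P$ behaves like a new rung: in outcome~2, $f_1$ is complete to $\{a_1,a_2,a_3\}$ and $f_n$ to $\{b_1,b_2,b_3\}$ with no other attachments, and in outcome~3, $f_1$ is complete to $\{a_1,a_2\}$ and $f_n$ to $\{b_1,b_2\}$ with no other attachments. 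In either case I claim $V(H)\cup V(P)$ induces a hyperprism: $P$ together with all three (resp.\ with the first two) strips forms, after recomputing the partition, a valid nine-set decomposition — in outcome~2 we get a new fourth strip merged appropriately, and in outcome~3 the vertices of $P$ join $C_1$ or $C_2$ or form part of a rerouted strip. The routine check is that every vertex of the new piece still lies on a chordless $A_i$-$B_i$ path with interior in $C_i$, and that the ``no other edges between strips'' condition is preserved; this uses precisely the ``no other edges between $\{f_1,\dots,f_n\}$ and $V(K)$'' clause, plus the fact that $F$ is anticomplete to $V(H)\setminus V(K)$ except through its stated attachments — which follows because $F$ is a single component of $G\setminus(V(H)\cup M)$ and any attachment of $F$ in $H\setminus K$ would, together with the rerouting freedom, contradict non-locality or the structure of~(10.1). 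Outcome~4, where $f_1$ is complete to $\{a_1,a_2\}$ and $f_n$ has an edge into $V(R_3)\setminus\{a_3\}$ with no other edges to $V(K)\setminus\{a_3\}$, I would handle by a short case analysis on where $f_n$ attaches along $R_3$: splicing $P$ with the appropriate subpath of $R_3$ and with $R_1,R_2$ produces either a shorter configuration of type~1 (hence the line-graph of a bipartite subdivision of $K_4$) or a genuine new rung, i.e.\ again enlarges the hyperprism.

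The main obstacle I expect is the bookkeeping in outcomes 2--4: verifying in each case that the enlarged vertex set genuinely satisfies all three hyperprism axioms — in particular the third axiom (every vertex on a chordless rung) after inserting $P$ and possibly rerouting an original rung — and confirming that there are no stray edges between the new path and the parts of $H$ outside the chosen instance $K$. The parity statement is free once there is no odd hole, and the polynomial running time follows because there are only polynomially many instances $K$ of $H$ to try and Theorem~\ref{spgt101}'s proof is algorithmic; so the real work is the structural case check, which is tedious but elementary given~(10.1).
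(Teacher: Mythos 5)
Your overall strategy -- pick an instance $K$ of $H$ whose attachment set for $F$ is non-local, apply Theorem~\ref{spgt101}, and case-analyse the four outcomes -- is indeed the strategy of the argument the paper relies on (Claim~(2) in the proof of Theorem~10.6 of \cite{CRST}, which the paper cites rather than reproves). But there are two genuine gaps. The first and most serious is the step you dismiss in one clause: Theorem~\ref{spgt101} only controls the edges between the path $f_1$-$\cdots$-$f_n$ and the chosen instance $V(K)$, whereas the conclusion ``$V(H)\cup V(P)$ induces a hyperprism'' requires controlling the edges between $P$ and \emph{all} of $V(H)$ -- e.g.\ that $f_1$ is complete to two entire sets among $A_1,A_2,A_3$ (not just to one vertex $a_i$ of each) and that the interior of $P$ is anticomplete to every $C_j$ and to the rungs not chosen. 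Your justification (``would contradict non-locality or the structure of (10.1)'') is not an argument: one must rerun the analysis over other choices of rungs and kill each bad attachment with an odd hole or a line-graph of a bipartite subdivision of $K_4$, and this is the bulk of the page-long proof in \cite{CRST}. Relatedly, your structural conclusions in outcomes~2 and~3 are wrong as stated: a hyperprism has exactly three strips, so there is no ``new fourth strip'' in outcome~2 (the path must be absorbed as a new rung of one of the \emph{existing} strips, which is legitimate only after the completeness just discussed is established), and in outcome~3 the path becomes a new rung of strip~$3$ -- the strip whose corner vertices $a_3,b_3$ its ends avoid -- not an addition to $C_1$ or $C_2$; placing interior vertices of $P$ into $C_1$ would create forbidden edges between $C_1$ and $A_2$. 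These misassignments indicate the case check was not actually carried out.

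The second gap is that you never distinguish even from odd hyperprisms. The parity of the rungs governs which end of the path of Theorem~\ref{spgt101} attaches to which triangle (via the parity of $n$), and the one piece of the argument that does not carry over verbatim from the even case is precisely the point the paper addresses with its substitute argument (the displayed paragraph replacing lines 16--18 on page 126 of \cite{CRST}, producing a line-graph of a bipartite subdivision of $K_4$ when $f_n$ misses $b_1$). A complete proof along your lines would have to either reproduce the full case analysis of Claim~(2) of Theorem~10.6 for both parities, or do what the paper does and cite it while supplying the odd-case patch; as written, the proposal does neither.
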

\begin{proof}
When $H$ is an even hyperprism, the proof of the lemma is identical to
the proof of Claim~(2) in the proof of Theorem~10.6 in \cite{CRST},
and we omit it.  When $H$ is an odd hyperprism, the proof of the lemma
is similar to the proof of Claim~(2), with the following adjustments:
the case when the integer $n$ in that proof is even and the case when
$n$ is odd are swapped, and the argument on page 126 of \cite{CRST},
lines 16--18, is replaced with the following argument:

\begin{quote}
Suppose that $f_n$ is not adjacent to $b_1$; so $f_1$ is adjacent to
$b_1$, $n\ge 2$, and $f_n$ is adjacent to $a_2$.  Let $R_3$ be any
$3$-rung, with ends $a_3\in A_3$ and $b_3\in B_3$.  Then $a_1b_1$ is
an edge, for otherwise $f_1$-$a_1$-$R_1$-$b_1$-$f_1$ is an odd hole;
and $f_1$ has no neighbor in $\{a_3,b_3\}$, for otherwise we would
have $n=1$.  Likewise, $a_2b_2$ is an edge, and $f_n$ has no neighbor
in $\{a_3,b_3\}$.  But then $V(R_1)\cup V(R_2)\cup V(R_3)\cup\{f_1,
\ldots, f_n\}$ induces the line-graph of a bipartite subdivision of
$K_4$, a contradiction.
\end{quote}

This completes the proof of the lemma.  
\end{proof}

\section{Even prisms}

We need to analyze the behavior of major neighbors of an even
hyperprism.  The reader may note that in the following theorem we are
not assuming that the graph is square-free.

\begin{theorem}\label{thm:ehyp}
Let $G$ be a Berge graph that contains an even prism and does not
contain the line-graph of a bipartite subdivision of $K_4$.  Let $H$
be an even hyperprism in $G$, with partition $(A_1,\ldots,B_3)$
as in the definition of a hyperprism, and let $x$ be a major neighbor
of $H$.  Then either: 
\begin{itemize}
\item $x$ is complete to at least two of $A_1, A_2, A_3$ and at least
two
of $B_1, B_2, B_3$, or 
\item $V(H)\cup\{x\}$ induces a hyperprism.
\end{itemize}
\end{theorem}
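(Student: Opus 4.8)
The plan is to argue by considering a major neighbor $x$ of $H$ that fails the first alternative, and show it must create a new hyperprism. Since $x$ is major, by definition there is an instance $K$ of $H$, formed by rungs $R_1,R_2,R_3$ with triangles $\{a_1,a_2,a_3\}$ and $\{b_1,b_2,b_3\}$, such that $x$ has at least two neighbors among the $a_i$ and at least two among the $b_i$. After relabeling we may assume $x$ is adjacent to $a_1,a_2$ and to $b_1,b_2$. The crux is to control how $x$ interacts with the strips $(A_i,C_i,B_i)$ of $H$, not just with the chosen instance; the hyperprism structure gives us freedom to re-choose rungs, and Theorem~\ref{spgt74} (applied to even prisms with long rungs) is precisely the tool that lets us transfer information from one instance to another.

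\textbf{Step 1: reduce to understanding $x$ on a fixed strip.} First I would fix notation: for each $i$, let $K^a_i = \{v \in A_i : x \text{ adjacent to } v\}$ and $K^b_i$ analogously, and study $N(x)\cap C_i$. The goal of this step is to show that on each strip $(A_i,C_i,B_i)$, the neighborhood of $x$ is ``rung-like'': either $x$ is complete to $A_i$ (and we can say something about $B_i,C_i$), or $x$ has a neighbor in $C_i$ that, together with $x$, can play the role of an internal vertex of a rung. Concretely I expect to prove that for each $i$, either $x$ is complete to $A_i$ and to $B_i$ (and anticomplete to $C_i$), or $x$ has exactly the right attachment pattern to $A_i\cup B_i\cup C_i$ so that inserting $x$ into strip $i$ keeps the path-cover property (iii) of a hyperprism. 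The main lever here is Lemma~\ref{lem:link}: if $x$ is adjacent to exactly one of $a_i$, but can be linked to the triangle $\{a_1,a_2,a_3\}$ via suitable sub-paths, we get a contradiction; and more refined holes rule out ``bad'' attachments of $x$ to the interior $C_i$.

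\textbf{Step 2: rule out the genuinely mixed case using Theorem~\ref{spgt74}.} The dangerous configuration is when $x$ is, say, complete to $A_1$ and $A_2$ but has a neighbor strictly inside $C_3$ (or is adjacent to only one of $a_3,b_3$), so that $x$ behaves like a major neighbor on two strips but like an interior-rung vertex on the third. Here I would pass to an instance $K$ with all three rungs of length at least $2$ (possible unless the third strip is trivial, a case handled separately), re-route the first rung through any other vertex of $C_1$ to get $R'_1$, and apply Theorem~\ref{spgt74}: it forces $x$ to have two neighbors in $\{a'_1,a_2,a_3\}$ for \emph{every} choice of $a'_1 \in A_1$, which pushes $x$ to be complete to $A_1$; symmetrically complete to $B_1$, and then to $A_2\cup B_2$. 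That is exactly the first alternative, contradiction — so in the remaining case $x$ attaches to each strip in a rung-compatible way. I expect this step, reconciling the instance-level statement of (7.4) with the strip-level structure of the hyperprism across all three strips simultaneously, to be the main obstacle.

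\textbf{Step 3: assemble the new hyperprism.} Once Steps 1–2 establish that on each strip $x$'s neighborhood is rung-compatible, I would define the candidate partition of $V(H)\cup\{x\}$: place $x$ into the strip where it is not complete to the $A$- and $B$-sides (there is at least one such strip, else we are in the first alternative), say strip $1$, putting $x$ into $A_1$, $B_1$, or $C_1$ according to which of $\{a_1,b_1\}$ it is (non)adjacent to, and leave $A_2,B_2,C_2,A_3,B_3,C_3$ and the untouched parts of strip $1$ as before. Then I verify the three hyperprism axioms: non-emptiness is immediate; the completeness/anticompleteness between distinct strips follows because $x$ has no neighbors in strips $2$ and $3$ other than the forced clique-neighbors (this is where Step 1's control of $N(x)$ off strip $1$ is used, together with the ``no other edges'' clause for $H$); and the path-cover property (iii) for the new strip $1$ follows because every vertex of the old strip $1$ lay on an $i$-rung of $H$, and $x$ was chosen to extend or insert into such rungs. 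This yields the second alternative, completing the proof.
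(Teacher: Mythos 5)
Your overall shape (negate the first outcome, analyze $N(x)$ strip by strip, rebuild a hyperprism) matches the paper's, but two of your three steps contain genuine gaps. First, Step 2 does not work as described. Theorem~\ref{spgt74} only applies when the replacement rung $R'_1$ has the \emph{same} end $b_1$ as $R_1$ (it is a path from $a'_1$ to $b_1$), not to an arbitrary re-routing through $C_1$; and even where it applies, its conclusion ``$x$ has at least two neighbors in $\{a'_1,a_2,a_3\}$'' is satisfied vacuously whenever $x$ is adjacent to both $a_2$ and $a_3$, so it cannot ``push $x$ to be complete to $A_1$.'' Indeed, in the configuration that actually produces the second outcome, $x$ is adjacent to $a_1,a_3,b_1,b_3$ and to no vertex of $A_2\cup B_2$ and to some but not all of $A_1\cup B_1$ --- complete to none of the $A_i$ --- without violating Theorem~\ref{spgt74}. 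The paper's mechanism is different: since $x$ is major on some instance but (having non-neighbors $u_1\in A_1$, $u_2\in A_2$) not major on the instance through $u_1,u_2$, one replaces rungs one at a time and isolates two consecutive instances $K,K'$ differing in a single rung with $x$ major on $K$ and not on $K'$; a short count of $|N(x)\cap A|,|N(x)\cap B|$ on both then lets Theorem~\ref{spgt101} (applied to $K'$ with $F=\{x\}$, which requires $x$ to be \emph{non-major} on $K'$) pin down $N(x)\cap V(K')$ exactly. That interpolation step is the missing idea; Lemma~\ref{lem:link} and Theorem~\ref{spgt74} alone do not give you the needed classification.

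Second, Step 3 is not just incomplete but impossible as stated: a major neighbor of $H$ can never be inserted into a single existing strip with the other eight sets left unchanged. If $x\in C_1$ it would have to be anticomplete to strips $2$ and $3$, but $x$ has at least two neighbors in a triangle $\{a_1,a_2,a_3\}$ of some instance and these lie in distinct strips. If $x\in A_1$ it would have to be anticomplete to $B_2\cup B_3$, yet $x$ has at least two neighbors in $\{b_1,b_2,b_3\}$, hence a neighbor in $B_2\cup B_3$; symmetrically $x\notin B_1$. The second outcome of the theorem is realized only by \emph{re-partitioning}: each of strips $1$ and $2$ splits into the sub-strip of rungs whose ends are adjacent to $x$ and the sub-strip of rungs whose ends are not (this dichotomy, your claim (\ref{eq:1a})-style, must itself be proved via Theorem~\ref{spgt101}), and the ``adjacent'' halves are merged with strip $3$ into one new strip whose interior contains $x$. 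Verifying that this nine-set partition is a hyperprism needs the additional facts that $A'_1$ is complete to $A''_1$, that $x$ is anticomplete to $A'_1\cup C'_1\cup B'_1$, and that $C'_1$ and $C''_1$ see no edges to each other --- none of which follow from your Step 1 as written.
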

\begin{proof}
Since $x$ is a major neighbor of $H$, there exists for each $i\in\{1,
2, 3\}$ an $i$-rung $W_i$ of $H$ such that $x$ is a major neighbor of
the prism $K_W$ formed by $W_1, W_2, W_3$.  Suppose that the first
item does not hold; so, up to symmetry, $x$ has a non-neighbor $u_1\in
A_1$ and a non-neighbor $u_2\in A_2$.  For each $i\in\{1,2\}$ let
$U_i$ be an $i$-rung with end $u_i$, and let $U_3$ be any $3$-rung.
Then $x$ is not a major neighbor of the prism $K_U$ formed by $U_1,
U_2, U_3$.  We can turn $K_W$ into $K_U$ by replacing the rungs one by
one (one at each step).  Along this sequence there are two consecutive
prisms $K$ and $K'$ such that $x$ is a major neighbor of $K$ and not a
major neighbor of $K'$.  Since $K$ and $K'$ are consecutive they
differ by exactly one rung.  Let $K$ be formed by rungs $R_1, R_2,
R_3$, where each $R_i$ has ends $a_i\in A_i$ and $b_i\in B_i$
($i=1,2,3$), and let $A=\{a_1,a_2,a_3\}$ and $B=\{b_1,b_2,b_3\}$; and
let $K'$ be formed by $P_1, R_2, R_3$ for some $i$-rung $P_1$.  Let
$P_1$ have ends $a'_1\in A_1$ and $b'_1\in B_1$, and let $A'=\{a'_1,
a_2, a_3\}$ and $B'=\{b'_1, b_2, b_3\}$.

Let $\alpha=|N(x)\cap A|$, $\beta=|N(x)\cap B|$, $\alpha'=|N(x)\cap
A'|$, $\beta'=|N(x)\cap B'|$.  We know that $\alpha\ge 2$ and
$\beta\ge 2$ since $x$ is a major neighbor of $K$, and
$\min\{\alpha',\beta'\}\le 1$ since $x$ is not a major neighbor of
$K'$.  Moreover, $\alpha'\ge \alpha-1$ and $\beta'\ge \beta-1$ since
$K$ and $K'$ differ by only one rung.  Up to the symmetry on $A,B$,
these conditions imply that the vector $(\alpha,\beta,\alpha',\beta')$
is equal to either $(3,2,3,1)$, $(3,2,2,1)$, $(2,2,2,1)$ or
$(2,2,1,1)$.  In either case we have $\beta=2$ and $\beta'=1$, so $x$
is adjacent to $b_1$, non-adjacent to $b'_1$, and adjacent to exactly
one of $b_2, b_3$, say to $b_3$.  \\
Suppose that $(\alpha',\beta')$ is equal to $(3,1)$ or $(2,1)$.  We
can apply Theorem~\ref{spgt101} to $K'$ and $F=\{x\}$, and it follows
that $x$ satisfies item~4 of that theorem, so $x$ is adjacent to
$a'_1, a_2, b_3$ and has no neighbor in $V(K') \setminus (\{a'_1,
a_2\}\cup V(R_3))$.  But then $V(R_2)\cup\{x,b_3\}$ induces an odd
hole, a contradiction.  So we may assume that
$(\alpha,\beta,\alpha',\beta') = (2,2,1,1)$, which restores the
symmetry between $A$ and $B$.  Since $\alpha=2$ and $\alpha'=1$, $x$
is adjacent to $a_1$, non-adjacent to $a'_1$, and adjacent to exactly
one of $a_2, a_3$.  If $x$ is adjacent to $a_2$, then $K'$ and $\{x\}$
violate Theorem~\ref{spgt101}.  So $x$ is adjacent to $a_3$ and not to
$a_2$, and Theorem~\ref{spgt101} implies that $x$ is a local neighbor
of $K'$ with $N(x)\cap K'\subseteq V(R_3)$, so $x$ has no neighbor in
$P_1$ or $R_2$.  Then we claim that:
\begin{equation}\label{eq:1a}
\longbox{For every $1$-rung $Q_1$, the ends of $Q_1$ are either both
adjacent to $x$ or both non-adjacent to $x$.}
\end{equation}
For suppose the contrary.  Then $x$ is not a major neighbor of the
prism formed by $Q_1, R_2, R_3$, and consequently that prism and the
set $F=\{x\}$ violate Theorem~\ref{spgt101}.  So (\ref{eq:1a}) holds.

\medskip

Let $A'_1=A_1\setminus N(x)$ and $A''_1=A_1\cap N(x)$, and
$B'_1=B_1\setminus N(x)$ and $B''_1=B_1\cap N(x)$.  By (\ref{eq:1a}),
every $1$-rung is either between $A'_1$ and $B'_1$ or between $A''_1$
and $B''_1$.  Let $C'_1$ be the set of vertices of $C_1$ that lie on a
$1$-rung whose ends are in $A'_1\cup B'_1$, and let $C''_1$ be the set
of vertices of $C_1$ that lie on a $1$-rung whose ends are in
$A''_1\cup B''_1$.  By (\ref{eq:1a}), $C'_1$ and $C''_1$ are disjoint
and there is no edge between $A'_1\cup C'_1\cup B'_1$ and $C''_1$ or
between $A''_1\cup C''_1\cup B''_1$ and~$C'_1$.

Pick any $1$-rung $P'_1$ with ends in $A'_1\cup B'_1$.  Then
Theorem~\ref{spgt101} implies (just like for $P_1$) that $x$ is a
local neighbor of the prism formed by $P'_1, R_2, R_3$, so $x$ has no
neighbor in $P'_1$.  It follows that:
\begin{equation}\label{eq:1b}
\mbox{$x$ has no neighbor in $A'_1\cup C'_1\cup B'_1$.}
\end{equation}
Moreover, we claim that:
\begin{equation}\label{eq:1c}
\mbox{$A'_1$ is complete to $A''_1$, and $B'_1$ is complete to
$B''_1$.}
\end{equation}
For suppose on the contrary, up to relabelling vertices and rungs,
that $a'_1$ and $a_1$ are non-adjacent.  Then, by (\ref{eq:1b}),
$V(P_1)\cup\{x, a_1, a_2, b_3\}$ induces an odd hole.  Thus
(\ref{eq:1c}) holds.

\medskip

Let $A'_2=A_2\setminus N(x)$, $A''_2=A_2\cap N(x)$, $B'_2=B_2\setminus
N(x)$ and $B''_2=B_2\cap N(x)$.  Let $C'_2$ be the set of vertices of
$C_2$ that lie on a $2$-rung whose ends are in $A'_2\cup B'_2$, and
let $C''_2$ be the set of vertices of $C_2$ that lie on a $1$-rung
whose ends are in $A''_2\cup B''_2$.  By the same arguments as for the
$1$-rungs, we see that every $2$-rung is either between $A'_2$
and $B'_2$ or between $A''_2$ and $B''_2$, that $C'_2$ and $C''_2$ are
disjoint and that there is no edge between $A'_2\cup C'_2\cup B'_2$
and $C''_2$ or between $A''_2\cup C''_2\cup B''_2$ and $C'_2$.  Also
$x$ has no neighbor in $A'_2\cup C'_2\cup B'_2$, and $A'_2$ is
complete to $A''_2$, and $B'_2$ is complete to $B''_2$.  Note that,
since $xa'_1$ and $xa_2$ are not edges, the sets $A'_1$, $B'_1$,
$C'_1$, $A'_2$, $B'_2$, $C'_2$ are all non-empty.  It follows that the
nine sets
\begin{center}
\begin{tabular}{ccc}
$A'_1$ & $C'_1$ & $B'_1$ \\ 
$A'_2$ & $C'_2$ & $B'_2$ \\ 
$A''_1\cup A''_2\cup A_3$ & $C''_1\cup C''_2\cup C_3\cup\{x\}$ &
$B''_1\cup B''_2\cup B_3$
\end{tabular}
\end{center}
form a hyperprism. So the second item of the theorem holds. 
\end{proof}

\begin{theorem}\label{thm:epr}
Let $G$ be a square-free Berge graph that contains an even prism and
does not contain the line-graph of a bipartite subdivision of $K_4$.
Then $G$ has a good partition.
\end{theorem}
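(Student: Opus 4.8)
The plan is to start from a maximal even hyperprism $H$ in $G$ with partition $(A_1,\dots,B_3)$ and strips $(A_i,C_i,B_i)$, together with the set $M$ of major neighbors of $H$. First I would record the structural consequences of the hypotheses: since $G$ contains no line-graph of a bipartite subdivision of $K_4$, Lemma~\ref{lem:hyplocal} forces every component $F$ of $G\setminus(V(H)\cup M)$ to have a \emph{local} set of attachments in $H$ (otherwise the first bullet of that lemma would, by maximality of $H$, produce a strictly larger hyperprism, and the second bullet is forbidden outright). Next, Theorem~\ref{thm:ehyp} applied to $x\in M$ says that each major neighbor is either complete to two of the $A_i$'s and two of the $B_i$'s, or can be absorbed into $H$ to form a larger hyperprism --- so again by maximality every $x\in M$ is complete to at least two of $A_1,A_2,A_3$ and at least two of $B_1,B_2,B_3$. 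Using square-freeness and Lemma~\ref{C4cliques} (with $K=\emptyset$ and the $X_i$'s among the complete-to-everything sets), I expect to deduce that for each $x\in M$ there is an index $i(x)\in\{1,2,3\}$ such that $x$ is complete to $A_j$ and $B_j$ for both $j\ne i(x)$; and then that $A_1\cup A_2\cup A_3\cup M$ and similarly $B_1\cup B_2\cup B_3\cup M$ decompose nicely, with $M$ essentially behaving like an extra ``column'' attached to the $A$-side and the $B$-side.

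The heart of the argument is then to build the good partition $(K_1,K_2,K_3,L,R)$. The natural guess is to split along one strip, say strip $3$: let $L$ contain (the relevant part of) $C_3$ together with the components $F$ whose attachments lie in $A_3\cup B_3\cup C_3$, let $R$ contain everything on the ``other side'' --- the interiors $C_1,C_2$, their strips' private vertices, and the remaining components --- and let $K_1,K_2,K_3$ be a partition of $A_3\cup B_3\cup M$ (or a suitable sub/superset) into the three required cliques. Condition (ii) ($K_1\cup K_2$ and $K_2\cup K_3$ cliques) should come from putting $M$-style complete vertices and one of $A_3$, $B_3$ into $K_2$, with $A_3$-ish vertices in $K_1$ and $B_3$-ish vertices in $K_3$; square-freeness plus Lemma~\ref{C4cliques} is exactly the tool to check these unions are cliques. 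Condition (i) (nonempty $L,R$, anticomplete) follows from $A_i,B_i$ nonempty and from the no-edges-between-strips property of a hyperprism. Condition (iii) is the delicate one: any chordless $K_1$--$K_3$ path through $L$ is, morally, a path from an $A_3$-type vertex to a $B_3$-type vertex with interior in $C_3$ and in attached components, i.e. (the guts of) a $3$-rung or something close to it, and one must exhibit an interior vertex of it that is complete to $K_1$; this is where I expect to invoke the SPGT structure theorems (Theorems~\ref{spgt74}, \ref{spgt101}) and the definition of a hyperprism (``every vertex of $A_i\cup B_i\cup C_i$ lies on a rung'') to control how components and major neighbors can attach along such a path. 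Condition (iv) (either $K_1$ anticomplete to $K_3$, or no $L$-vertex sees both) should be arranged by the choice of which part of $A_3\cup B_3$ has edges to which --- if $A_3$ is complete to $B_3$ we get the first alternative after the correct bookkeeping, otherwise we argue an interior vertex of a rung cannot see both ends without creating a $C_4$ or odd hole. Condition (v) (a triad containing one vertex from $L$ and one from $R$): pick $x\in L$ on a $3$-rung interior and $y\in R$ on a $1$-rung interior that are non-adjacent (no edges between distinct strips), and find a third vertex non-adjacent to both, e.g. on the $2$-strip, using that rungs have length $\ge 1$ and the non-adjacency across strips.

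The main obstacle, I expect, is verifying condition (iii) cleanly while also respecting condition (iv), because these two conditions pull in opposite directions: (iii) wants a rich ``complete-to-$K_1$'' vertex on every crossing path, which is easiest when $K_1$ is small, while making $K_1\cup K_2$ a clique and getting the coloring-relevant structure wants $K_1$ to be large enough to absorb all the $A_3$-side material (including possibly some of $C_3$ and some components). The resolution should come from choosing $K_1$ to be exactly the set of vertices that a suitable ``$A_3$-end'' is complete to --- so that completeness to $K_1$ is inherited along rungs --- and from a careful case analysis (mirroring the style of the proof of Theorem~\ref{thm:ehyp}) of how a non-local component could create a crossing path, which Lemma~\ref{lem:hyplocal} and maximality rule out. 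A secondary difficulty is handling degenerate cases: rungs of length exactly $1$ (so some $C_i$ empty), the interaction of $M$ with $L$ (a major neighbor should never be put in $L$, and one must check it has no neighbor deep inside $L$ other than on $K_1\cup K_2$), and ensuring all five of $K_1,K_2,K_3,L,R$ land in the right place when several components attach to the same strip. I would organize the write-up as: (1) local-attachment and major-neighbor normalization; (2) definition of the partition; (3) checking (i),(ii),(v) (routine, via Lemma~\ref{C4cliques} and hyperprism axioms); (4) checking (iii) and (iv) together (the real work).
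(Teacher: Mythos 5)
Your setup is the same as the paper's: take a maximal even hyperprism $H$, use Lemma~\ref{lem:hyplocal} plus maximality to force every component of $G\setminus(V(H)\cup M)$ to attach locally, use Theorem~\ref{thm:ehyp} plus maximality to force every major neighbor to be complete to two of the $A_i$'s and two of the $B_j$'s, observe that $M$ is a clique (two non-adjacent major vertices would have common neighbors $a\in A$, $b\in B$ giving a square), and apply Lemma~\ref{C4cliques} with $K=M$ to get a common index $i$ with $M\cup A_i$ and $M\cup B_i$ both cliques. All of that is right. (One small slip: Theorem~\ref{thm:ehyp} does not give a single index $i(x)$ with $x$ complete to both $A_j$ and $B_j$ for $j\neq i(x)$ --- the two ``bad'' indices on the $A$-side and the $B$-side need not coincide for an individual $x$ --- but this stronger statement is not needed.)

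The genuine gap is in the orientation of the cut. You put the interior of the chosen strip into $L$ (i.e.\ $L=C_3\cup F_3$, $R=$ the other two strips and the rest, cutset $\approx A_3\cup M\cup B_3$). With that choice, condition~(iii) is simply false: a $3$-rung $a$-$c$-$b$ of length $2$ is a chordless path from $K_3$ to $K_1$ with interior $\{c\}\subseteq L$, and $c$, while adjacent to its own end $a\in A_3$, need not be adjacent to any other vertex $a'\in A_3$ --- square-freeness gives no contradiction here, since $c$ has no neighbor in $A_1\cup A_2$ and so $\{c,a,a_1,a'\}$ induces a $P_4$ or a paw, never a $C_4$. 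Your proposed repair (shrink $K_1$ to the vertices some ``$A_3$-end'' is complete to) cannot work either: any vertex of $A_3$ expelled from $K_1$ has neighbors both in $C_3\subseteq L$ (it lies on a rung) and in $A_1\subseteq R$ (hyperprism axiom), so it can be placed in neither $L$ nor $R$ without violating condition~(i). The paper's construction reverses the roles: $K_1=A_1$, $K_2=M$, $K_3=B_1$, $L=$ the \emph{whole} of strips $2$ and $3$ together with $F_2\cup F_3\cup F_A\cup F_B$, and $R=C_1\cup F_1\cup Z$. Then any chordless path from $B_1$ to $A_1$ with interior in $L$ must enter $A_1$ through $A_2\cup A_3$, and $A_2\cup A_3$ is complete to $A_1$ \emph{by the hyperprism axioms} --- completeness to $K_1$ is supplied by the structure of $H$ rather than by any property of rung interiors, which is exactly what your version lacks. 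The remaining checks ($A_1$ anticomplete to $B_1$ since all rungs are even of length at least $2$, hence each $C_i\neq\emptyset$ and $\{x_1,x_2,x_3\}$ with $x_i\in C_i$ is the required triad) then go through as you describe.
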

\begin{proof} 
Let $H$ be a maximal even hyperprism in $G$, with partition
$(A_1,\ldots,B_3)$ as in the definition of a hyperprism.  Recall that,
since $H$ is an even hyperprism, $A_i$ is anticomplete to $B_i$ for
each $i$.  Let $M$ be the set of major neighbors of $H$.  Let $Z$ be
the set of vertices of the components of $V(G)\setminus (V(H)\cup M)$
that have no attachment in $H$.  By Lemma~\ref{lem:hyplocal} every
component of $G\setminus (V(H)\cup M\cup Z)$ attaches locally to $H$.
For each $i=1,2,3$, let $F_i$ be the union of the vertex-sets of the
components of $G\setminus (V(H)\cup M\cup Z)$ that attach to $A_i\cup
B_i\cup C_i$.  Let $F_A$ be the union of the vertex-sets of the
components of $G\setminus (V(H)\cup M\cup Z\cup F_1\cup F_2\cup F_3)$
that attach to $A_1\cup A_2\cup A_3$, and define $F_B$ similarly.  By
Lemma~\ref{lem:hyplocal} the sets $F_1$, $F_2$, $F_3$, $F_A$, $F_B$
are well-defined and are pairwise anticomplete to each other, and
$V(G)= V(H)\cup M\cup Z\cup F_1\cup F_2\cup F_3\cup F_A\cup F_B$.

By Theorem~\ref{thm:ehyp}, every vertex in $M$ is complete to at
least two of $A_1, A_2, A_3$ and at least two of $B_1, B_2, B_3$.

Suppose that $M$ contains non-adjacent vertices $x,y$.  By
Theorem~\ref{thm:ehyp}, $x$ and $y$ have a common neighbor $a$ in $A$
and a common neighbor $b$ in $B$.  Then $\{x,y,a,b\}$ induces a
square, a contradiction.  Therefore $M$ is a clique.  By
Lemma~\ref{C4cliques}, $M\cup A_i$ is a clique for at least two values
of $i$, and similarly $M\cup B_j$ is a clique for at least two values
of $j$.  Hence we may assume that both $M\cup A_1$ and $M\cup B_1$ are
cliques.

Define sets $K_1=A_1$, $K_2=M$, $K_3=B_1$, $L=A_2\cup B_2\cup C_2\cup
F_2\cup A_3\cup B_3\cup C_3\cup F_3\cup F_A\cup F_B$ and
$R=V(G)\setminus (K_1\cup K_2\cup K_3\cup L)$.  (So $R=C_1\cup F_1\cup
Z$.)  Every path from $K_3$ to $K_1$ that meets $L$ contains a vertex
from $A_2\cup A_3$, which is complete to $K_1$.  Moreover, since $H$
is an even hyperprism, $K_1$ is anticomplete to $K_3$, and the sets
$C_1, C_2, C_3$ are non-empty, so, picking any vertex $x_i\in C_i$ for
each $i\in\{1,2,3\}$, we see that $\{x_1,x_2,x_3\}$ is a triad with a
vertex in $L$ and a vertex in $R$.  So $(K_1, K_2, K_3, L, R)$ is a
good partition of $V(G)$.
\end{proof}

\section{Odd prisms}

Now we analyze the behavior of major neighbors of an odd hyperprism.
The following theorem is the analogue of Theorem~\ref{thm:ehyp}, but
here we need the assumption that the graph is square-free, and there is an
additional outcome.

Let $H$ be an odd hyperprism in $G$, with partition $(A_1,\ldots,B_3)$
as in the definition of a hyperprism.  For $i=1,2,3$, let
$A^*_i=\{x\in A_i\mid x$ has a neighbor in $B_i\}$ and $B^*_i=\{x\in
B_i\mid x$ has a neighbor in $A_i\}$.  So $A^*_i$ and $B^*_i$ are
either both empty or both non-empty.  Moreover, since $G$ is
square-free, $A^*_i\cup B^*_i$ is non-empty for at most one value of
$i$.

\begin{theorem}\label{thm:ohyp}
Let $G$ be a square-free Berge graph.  Let $H$ be a maximal odd
hyperprism in $G$, with partition $(A_1,\ldots,B_3)$ as in the
definition of a hyperprism, and let $m \in V(G) \setminus V(H)$ be a
major neighbor of $H$.  Then either:
\begin{itemize}
\item 
$m$ is complete to at least two of $A_1, A_2, A_3$ and at least two of
  $B_1, B_2, B_3$, and for every $i \in \{1,2,3\}$ $m$ is complete to
  $A_i^* \cup B_i^*$, or
\item
$A_1^*$ and $B_1^*$ are non-empty, $m$ is complete to $A_1^* \cup
B_1^*$, to at least one of $A_2,A_3$ and to at least one of $B_2,B_3$.
Further, suppose that $m$ has a non-neighbor in at least two of
$B_1,B_2,B_3$.  For $i \in \{1,2,3\}$, let $B_i^1$ be the set of
non-neighbors of $m$ in $B_i$, and $C_i^1$ be the set of all the
vertices of $C_i$ that belong to rungs between $B_i^1$ and $A_i$, and
let $A_i^1$ be the set of all vertices of $A_i$ that are in rungs
whose other end is in $B_i^1$.  Then:
\begin{itemize}
\item  
$A_1^1 \subseteq A_1^*$, and \\
-- for every  path $P$ from $B_1^1 \cup C_1^1$ to $(C_1
\setminus C_1^1) \cup (A_1 \setminus A_1^1)$, some vertex of $V(H)
\setminus (A_1 \cup B_1 \cup C_1)$ has a neighbor in $P^*$. \\ 
-- for every path $P$ from $m$ to $B_1^1\cup C_1^1$ some vertex of
$V(H) \setminus (A_1 \cup B_1 \cup C_1)$ has a neighbor in $P^*$.
\item Let $i \in \{2,3\}$. \\
 -- $A_i^1$ is complete to $A_i \setminus A_i^1$, and $B_i^1$ is
 complete to $B_i \setminus B_i^1$, \\
-- for every  path $P$ from $C_i^1$ to $(A_i \cup B_i \cup C_i)
\setminus (A_i^1 \cup B_i^1 \cup C_i^1)$, some vertex of $V(H)
\setminus (A_i \cup B_i \cup C_i)$ has a neighbor in $P^*$; \\
-- for every  path $P$ from $C_i \setminus C_i^1$ to
$A_i^1\cup B_i^1\cup C_i^1$, some vertex of $V(H) \setminus (A_i \cup
B_i \cup C_i)$ has a neighbor in $P^*$;\\
-- $m$ is complete to $A_i^1$ and for every path $P$ from $m$ to
$B_i^1\cup C_i^1$ some vertex of $V(H) \setminus (A_i \cup B_i \cup
C_i)$ has a neighbor in $P^*$.
\end{itemize}
\end{itemize}
\end{theorem}
\begin{proof}
We first observe that:
\begin{equation}\label{eq:opr2}
\mbox{Every rung of $H$ contains a neighbor of $m$.}
\end{equation}
For suppose on the contrary, up to symmetry, that there is a $1$-rung
$P_1$ that contains no neighbor of $m$.  Let $P_1$ have ends $a_1\in
A_1$ and $b_1\in B_1$.  Suppose $m$ has neighbors $p$ and $q$ such
that $p \in A_2\cup A_3$, $q\in B_2\cup B_3$, and $p$ is non-adjacent
to $q$.  Then $p$-$a_1$-$P_1$-$b_1$-$q$-$m$-$p$ is an odd hole,
contradiction.  Hence, since $m$ is major, we may assume that $m$ is
anticomplete to $A_3\cup B_3$ and has neighbors $a'_1$, $b'_1$, $a_2$,
$b_2$ such that $a'_1\in A_1$, $b'_1\in B_1$, $a_2 \in A_2$, $b_2 \in
B_2$, and $a_2$ and $b_2$ are adjacent.  Note that $A_1\cup A_3$ is
anticomplete to $B_1\cup B_3$ since $A_2^*\cup B_2^*\neq\emptyset$.
Pick any $a_3\in A_3$ and $b_3 \in B_3$.  Suppose that both $a'_1,
b'_1$ have a neighbor in $P_1^*$.  Then there is a $1$-rung $P'_1$
with ends $a'_1, b'_1$ and interior in $P_1^*$, and then
$V(P'_1)\cup\{m\}$ induces an odd hole.  Hence we may assume that
$b'_1$ has no neighbor in $P_1^*$.  Then $b_1b'_1$ is not an edge, for
otherwise $m$-$a_2$-$a_1$-$P_1$-$b_1$-$b'_1$-$m$ is an odd hole.
Suppose that $a'_1$ has a neighbor $c_1$ in $P_1^*$, and choose $c_1$
as close to $b_1$ as possible along $P_1$.  Then
$a'_1$-$c_1$-$P_1$-$b_1$ is a $1$-rung, so it is odd; but then
$m$-$a'_1$-$c_1$-$P_1$-$b_1$-$b_3$-$b'_1$-$m$ is an odd hole.  So
$a'_1$ is also anticomplete to $P_1^*$, and by symmetry $a_1a'_1$ is
not an edge.  But then
$m$-$a'_1$-$a_3$-$a_1$-$P_1$-$b_1$-$b_3$-$b'_1$-$m$ is an odd hole.
This proves (\ref{eq:opr2}).

\medskip

\begin{equation}\label{mabstar}
\mbox{For each $i$, $m$ is complete to $A^*_i\cup B^*_i$.}
\end{equation}
For suppose the contrary.  Then there are vertices $u_i\in A^*_i$ and
$v_i\in B^*_i$ such that $u_iv_i$ is an edge and $m$ has a
non-neighbor in $\{u_i,v_i\}$.  Since $m$ is a major neighbor of $H$,
it has a neighbor $a$ in $(A_1\cup A_2\cup A_3)\setminus A_i$ and a
neighbor $b$ in $(B_1\cup B_2\cup B_3)\setminus B_i$.  Then the
subgraph induced by $\{m,a,b,u_i,v_i\}$ contains a square or a $C_5$,
a contradiction.  Thus (\ref{mabstar}) holds.

\medskip

In view of (\ref{mabstar}) we may assume that $m$ does not satisfy the
property of being complete to at least two of $B_1, B_2, B_3$ (for
otherwise the first outcome of the theorem holds).  So we may assume
that $m$ is not complete to $B_1$, not complete to $B_2$, and
(possibly exchanging the roles of $B_2$ and $B_3$), that $m$ has a
neighbor in $B_3$.  Let $b_2\in B_2$ be a non-neighbor of $m$ and
$b_3\in B_3$ be a neighbor of~$m$.
\begin{equation}\label{eq:opr3}
\longbox{Let $b_1$ be a non-neighbor of $m$ in $B_1$, and let
$a_1$-$P_1$-$b_1$ be a rung through $b_1$.  Then $m$ is adjacent to
$a_1$ and anticomplete to $V(P_1) \setminus \{a_1\}$.  Moreover, let
$P$ be a path from $m$ to $V(P_1) \setminus \{a_1\}$.  Then some
vertex of $V(H) \setminus (A_1 \cup B_1 \cup C_1)$ has a neighbor in
$P^*$.}
\end{equation}
Let $a_2$-$P_2$-$b_2$ be a rung through $b_2$.  If there is a path $P$
violating \eqref{eq:opr3}, then by \eqref{eq:opr2} $m$ has a neighbor in
$P_2$, and therefore we can link $m$ to $\{b_1, b_2, b_3\}$
via $P$, $P_2$ and $m$-$b_3$, a contradiction to Lemma~\ref{lem:link}.
Thus no such path exists, and in particular $m$ is anticomplete to
$V(P_1) \setminus \{a_1\}$.  Now it follows from (\ref{eq:opr2}) that
$m$ is adjacent to $a_1$.  This proves (\ref{eq:opr3}).  Note that the
analogue of (\ref{eq:opr3}) also holds for $B_2$.

\begin{equation}\label{eq:opr4}
\longbox{For every symbol $X$ in $\{A,B,C\}$ there is a partition of
$X_1$ into two sets $X'_1$ and $X''_1$ such that: \\
-- $A'_1$ is complete to $A''_1$, and $B'_1$ is complete to $B''_1$;
\\
-- $A'_1$ is anticomplete to $B'_1$; \\
-- for every path $P$ from $C'_1$ to $A''_1\cup B''_1\cup C''_1$, some
vertex of $V(H) \setminus (A_1 \cup B_1 \cup C_1)$ has a neighbor in
$P^*$, and in particular $C_1'$ is anticomplete to $A_1''
\cup B_1'' \cup C_1''$;\\
-- for every path $P$ from $C''_1$ to $A'_1\cup B'_1\cup C'_1$, some
vertex of $V(H) \setminus (A_1 \cup B_1 \cup C_1)$ has a neighbor in
$P^*$, and in particular $C_1''$ is anticomplete to $A_1' \cup B_1'
\cup C_1'$; \\
-- $m$ is complete to $A'_1$ and for every path $P$ from $m$ to
$B'_1\cup C'_1$ some vertex of $V(H) \setminus (A_1 \cup B_1 \cup
C_1)$ has a neighbor in~$P^*$, and in particular $m$ is anticomplete
to $B_1' \cup C_1'$.}
\end{equation}

Pick rungs $a_2$-$P_2$-$b_2$ and $a_3$-$P_3$-$b_3$ containing $b_2$
and $b_3$ respectively.  By (\ref{eq:opr3}), $m$ is adjacent to $a_2$.

Let $B'_1=\{y\in B_1\setminus B^*_1 \mid y$ is non-adjacent to $m$ and
there exists a rung from $A_1\setminus A^*_1$ to $y\}$, and let
$A'_1=\{x\in A_1\setminus A^*_1\mid$ there is a rung from $x$ to
$B'_1\}$.  Let $C'_1=\{z\in C_1\mid z$ lies on a rung between $B'_1$
and $A'_1\}$.  So $m$ is anticomplete to $B'_1$ and, by
(\ref{eq:opr3}), $m$ is complete to $A'_1$ and anticomplete to $C'_1$.
Let $B''_1=B_1\setminus B'_1$, $A''_1=A_1\setminus A'_1$, and
$C''_1=C_1\setminus C'_1$.  Let $Q$ be any rung with ends $x\in A'_1$
and $y\in B'_1$.  We prove five  claims (a)--(e) as follows.

\medskip

(a) {\it For every path $P$ from $B''_1$ to $A'_1\cup C'_1$ some
vertex of $V(H) \setminus (A_1 \cup B_1 \cup C_1)$ has a neighbor in
$P^*$.} \\
We know that $B''_1$ is anticomplete to $A'_1$ since $A'_1\subseteq
A_1\setminus A^*_1$.  Suppose up, to relabelling, that there is a path
$P$ from some vertex $b_1$ in $B''_1$ to a vertex of
$Q\setminus\{y\}$, contradicting (a).  Then there is a  path
$Q'$ from $x$ to $b_1$ with interior in $P^* \cup Q^*$.  By the
maximality of $H$, ${Q'}^* \subseteq C_1$, and $Q'$  has odd length at
least $3$. By the definition of $B'_1$ and the
existence of $Q'$ imply that $b_1$ is adjacent to $m$. Suppose $m$ has a
neighbor in $P^*$.  Since no vertex of
$V(H) \setminus (A_1 \cup B_1 \cup C_1)$ has a neighbor in $P^*$,
it follows from the second statement of \eqref{eq:opr3} that
$P$ is a path from $b_1$ to $a_1$, and $P^* \cap V(Q)=\emptyset$,
and that $P^*$ is anticomplete to $V(Q)$. If $y$ is adjacent to $b_1$, then
$x-P-b_1-y-Q-x$ is an odd hole, a contradiction. Thus $b_1$ is non-adjacent to
$y$, and so $b_1-m-x-Q-y-b_2-b_1$, is an odd hole, again a contradiction.
This proves that $m$ is anticomplete
to $P^*$. Recall that $m$ is adjacent to $x$
and anticomplete to $V(Q) \setminus \{x\}$.
It follows that $m$ is
anticomplete to ${Q'}^*$.  consequently 
$V(Q')\cup\{m\}$ induces an odd hole.  Since this holds for all $Q$,
claim (a) is established.

\medskip

(b) {\it $B''_1$ is complete to $B'_1$.} \\ 
Suppose, up to relabelling, that some $b_1$ in $B''_1$ is not adjacent
to $y$.  Recall that $m$ is anticomplete to $V(Q) \setminus \{x\}$.
By (a) $b_1$ has no neighbor in $Q$.  Then $b_1$ is
non-adjacent to $m$, for otherwise $x$-$Q$-$y$-$b_2$-$b_1$-$m$-$x$
induces an odd hole.  Pick a rung $a_1$-$P_1$-$b_1$.  By
(\ref{mabstar}), $b_1\notin B^*_1$, hence, by the definition of
$B'_1$, we have $a_1\in A^*_1$, and so $a_1$ has a neighbor $b^*_1\in
B^*_1$.  If $b_1^*$ is not adjacent to $y$, then, by the same argument
as for $b_1$ it follows that $b_1^*$ is not adjacent to $m$, which
contradicts (\ref{mabstar}).  Therefore $b^*_1$ is adjacent to $y$
and, by (\ref{mabstar}), to $m$.  By (a) $b^*_1$ has no neighbor in
$Q\setminus y$.  We know that $a_1$ is not adjacent to $y$ since
$y\notin B^*_1$.  Moreover $a_1$ has no neighbor in $Q\setminus x$,
for otherwise we can link $a_1$ to $\{b_3, y, b^*_1\}$ via
$a_3$-$P_3$-$b_3$, $Q\setminus x$ and $a_1$-$b^*_1$, a contradiction
to Lemma~\ref{lem:link}.  Then $xa_1$ is an edge, for otherwise
$x$-$Q$-$y$-$b^*_1$-$a_1$-$a_3$-$x$ is an odd hole.  There is no edge
between $Q$ and $P_1$ except $a_1x$, for otherwise there would be a
rung from $x$ to $b_1$, implying $b_1 \in B'_1$.  But then
$b_1$-$P_1$-$a_1$-$x$-$Q$-$y$-$b_3$-$b_1$ is an odd hole.  Thus
$B''_1$ is complete to $y$, and since this holds for all $Q$, the
claim is established.

\medskip


 
(c) {\it For every path $P$ from $A''_1$ to $B'_1$, some vertex of
$V(H) \setminus (A_1 \cup B_1 \cup C_1)$ has a neighbor in $P^*$.} \\
For suppose up to relabelling that $a_1$-$P_1$-$y$ is a path
contradicting (c).  By the maximality of $H$, $P_1$ is a rung of $H$.
By the definition of $B'_1$ we have $a_1\in A^*_1$, so $a_1$ has a
neighbor $b_1^*\in B^*_1$.  By (a) and (b), $b^*_1$ is adjacent to $y$
and anticomplete to $Q\setminus y$. Since $y \not \in B_1^*$, it follows that
$a_1$ is not adjacent to $y$. Then $a_1$ has no neighbor in
$Q\setminus x$, for otherwise we can link $a_1$ to $\{y,b^*_1,b_2\}$
via $Q\setminus x$, $a_1$-$b^*_1$ and $a_2$-$P_2$-$b_2$.  Moreover
$a_1$ is adjacent to $x$, for otherwise
$a_1$-$b^*_1$-$y$-$Q$-$x$-$a_2$-$a_1$ is an odd hole.  Since $V(Q)\cup
V(P_1)$ does not induce an odd hole, there is an edge between
$Q\setminus y$ and $P_1\setminus\{a_1,y\}$.  Since
$b_1^*$-$y$-$P_1$-$a_1$-$b_1^*$ cannot be an odd hole, $b_1^*$ has a
neighbor in $P_1\setminus\{a_1,y\}$.  But this implies the existence
of a rung between $x$ and $b^*_1$, which contradicts (a).

\medskip

(d) {\it $A''_1$ is complete to $A'_1$.} \\
Suppose on the contrary that some $a_1$ in $A''_1$ is non-adjacent to
$x$.  Let $a_1$-$P_1$-$b_1$ be a rung.  By (c), $b_1\in B''_1$, and by
(a)--(c), the only edge between $P_1$ and $Q$ is $b_1y$.  Then
$a_1$-$P_1$-$b_1$-$y$-$Q$-$x$-$a_3$-$a_1$ is an odd hole, a
contradiction.

\medskip

(e) {\it For every path $P$ from $C'_1$ to $A''_1\cup C''_1\cup
B''_1$, some vertex of $V(H) \setminus (A_1 \cup B_1 \cup C_1)$ has a
neighbor in $P^*$, and for every path $P$ from $C''_1$ to $A'_1\cup
C'_1\cup B'_1$ some vertex of $V(H) \setminus (A_1 \cup B_1 \cup C_1)$
has a neighbor in $P^*$.} \\
Indeed, in the opposite case there is a path that violates (a) or (c).

\medskip

It follows from \eqref{eq:opr3} and claims (a)--(e) that all the properties described in
(\ref{eq:opr4}) are satisfied.  So (\ref{eq:opr4}) holds.

\medskip

By (\ref{eq:opr4}), if $B_1' \neq \emptyset$, then $(A_1, C_1, B_1)$
is a strip 
such that $m$ is complete to $A'_1$ and anticomplete to
$B'_1\cup C'_1$.  Likewise, if $B_2' \neq \emptyset$, then
$(A'_2, C'_2, B'_2)$ is a strip such that $m$ is complete to $A'_2$ and
anticomplete to $B'_2\cup C'_2$.  So if both $B_1' \neq \emptyset$ and
$B_2' \neq \emptyset$, using the properties described in
(\ref{eq:opr4}), we obtain a hyperprism:
\begin{center}\begin{tabular}{ccc}
$A'_1$ & $C'_1$ & $B'_1$\\
$A'_2$ & $C'_2$ & $B'_2$ \\
$A_3\cup A''_1\cup A''_2\cup \{m\}$ & $C_3\cup C''_1\cup C''_2$ &
$B_3\cup B''_1\cup B''_2$.
\end{tabular}\end{center}
contrary to the maximality of $H$.

Thus we may assume that $B_1' = \emptyset$, and consequently
$A_1^*,B_1^*$ are both non-empty.  We claim that the following holds:

\medskip

\begin{equation}\label{eq:new}
\longbox{Let $B_1^1$ be the set of non-neighbors of $m$ in $B_1$.
Then every rung with an end in $B_1^1$ has its other end in $A_1^*$.
Moreover, let $C_1^1$ be the set of all the vertices of $C_1$ that
belong to rungs between $B_1^1$ and $A_1^*$.  Then for every path $P$
from $B_1^1 \cup C_1^1$ to $(C_1 \setminus C_1^1) \cup (A_1 \setminus
A_1^*)$ some vertex of $V(H) \setminus (A_1 \cup B_1 \cup C_1)$ has a
neighbor in $P^*$.}
\end{equation}

Since $B_1'=\emptyset$, it follows that every rung of $H$ with an end
in $B_1^1$ has its other end in $A_1^*$.  Next suppose that there is a
path $P$ from $c \in B_1^1 \cup C_1^1$ to
$d \in (C_1 \setminus C_1^1) \cup (A_1 \setminus A_1^*)$ violating
\eqref{eq:new}.  By the
definition of a hyperprism, there is a  path $P_c$ from $c$
to $b \in B_1^1$ with interior in $C_1$, and a  path $P_d$
from $d$ to $a \in A_1 \setminus A_1^*$. Since $P^*$ violates \eqref{eq:new},
no vertex of  $V(H) \setminus (A_1 \cup B_1 \cup C_1)$ has a
neighbor in $P^*$, and therefore $P^* \cap (A_1 \cup B_1)=\emptyset$.
Since $B_1^1$ is
anticomplete to $A_1$, there is a  path $Q$ from $a$ to $b$ with
non-empty interior contained in
$V(P) \cup V(P_a) \cup V(P_b) \setminus \{a,b\}$.
By the maximality of $H$,
$Q$  is a rung of $H$, contrary
to the fact that every rung with an end in $B_1^1$ has its other end
in $A_1^*$.  This proves (\ref{eq:new}).

\medskip

Since $A_1^*,B_1^*$ are non-empty, it follows that $B_2' \neq \emptyset$,
and so $(A_2',C_2',B_2')$ is a strip.
By (\ref{mabstar})  $m$ has a
neighbor in $B_1$. Suppose that  $m$ is not complete to $B_3$. Then there is
symmetry between $B_1$ and $B_3$, and therefore $B_3' \neq \emptyset$
and $(A_3',C_3',B_3')$ is a strip. Applying (\ref{eq:opr4}) to
$A_2,B_2, C_2$ and to  $A_3,B_3,C_3$,
we obtain a hyperprism:
\begin{center}\begin{tabular}{ccc}
$A'_2$ & $C'_2$ & $B'_2$\\
$A'_3$ & $C'_3$ & $B'_3$ \\
$A_1\cup A''_2\cup A''_3\cup \{m\}$ & $C_1\cup C''_2\cup C''_3$ &
$B_1\cup B''_2\cup B''_3$.
\end{tabular}\end{center}
contrary to the maximality of $H$.

Thus we may assume that $m$ is complete to $B_3$.  Since $m$ has a
neighbor in $A_1^* \subseteq A_1$, reversing the roles of $A$ and $B$
we may assume that $m$ is complete to at least one of $A_2,A_3$.  Now
by \eqref{eq:opr3}, \eqref{eq:opr4} and \eqref{eq:new}, the second outcome of
the theorem holds.
\end{proof}

\begin{theorem}\label{thm:opr}
Let $G$ be a square-free Berge graph that contains an odd prism and
does not contain the line-graph of a bipartite subdivision of $K_4$.
Then $G$ admits a good partition.
\end{theorem}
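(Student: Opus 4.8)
The plan is to mimic the structure of the proof of Theorem~\ref{thm:epr} for even prisms, but now using Theorem~\ref{thm:ohyp} in place of Theorem~\ref{thm:ehyp}. First I would take a maximal odd hyperprism $H$ in $G$ (which exists since $G$ contains an odd prism, hence an odd hyperprism), with partition $(A_1,\ldots,B_3)$, and let $M$ be the set of major neighbors of $H$. Since $G$ contains no line-graph of a bipartite subdivision of $K_4$, Lemma~\ref{lem:hyplocal} applies, and by maximality of $H$ the first alternative of that lemma is impossible; so every component of $G\setminus(V(H)\cup M)$ that attaches to $H$ attaches locally. This lets me split $V(G)\setminus(V(H)\cup M)$ into the anticomplete pieces $Z$ (no attachment), $F_1,F_2,F_3$ (attaching to a single strip $A_i\cup B_i\cup C_i$), and $F_A$, $F_B$ (attaching to the $A$-side or $B$-side only), exactly as in the even case.

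Next I would analyze $M$. By Theorem~\ref{thm:ohyp}, each $m\in M$ is either complete to two of $A_1,A_2,A_3$ and two of $B_1,B_2,B_3$, or $V(H)\cup\{m\}$ induces a hyperprism — but the latter is excluded by maximality of $H$ (here I should double-check that $V(H)\cup\{m\}$ being a hyperprism would contradict maximality; since $V(H)\subsetneq V(H)\cup\{m\}$ it does, provided the new partition is genuinely a hyperprism partition, which Theorem~\ref{thm:ohyp} asserts). So every vertex of $M$ is complete to at least two of $A_1,A_2,A_3$ and at least two of $B_1,B_2,B_3$. If $M$ had two non-adjacent vertices $x,y$, they would have a common neighbor $a$ among $A_1,A_2,A_3$ and a common neighbor $b$ among $B_1,B_2,B_3$, and $\{x,y,a,b\}$ would induce a square; hence $M$ is a clique. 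Then Lemma~\ref{C4cliques} (applied with $K=M$ and the $X_i$'s being $A_1,A_2,A_3$, using that each $m\in M$ is complete to $A\setminus A_i$ for some $i$) gives that $M\cup A_i$ is a clique for at least two values of $i$, and similarly $M\cup B_j$ for at least two values of $j$; so up to relabelling both $M\cup A_1$ and $M\cup B_1$ are cliques.

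Finally I would set $K_1=A_1$, $K_2=M$, $K_3=B_1$, $L=A_2\cup B_2\cup C_2\cup F_2\cup A_3\cup B_3\cup C_3\cup F_3\cup F_A\cup F_B$, and $R=C_1\cup F_1\cup Z$, and verify the good-partition axioms. Axiom (ii) holds since $K_1\cup K_2$ and $K_2\cup K_3$ are cliques. For (iii): in $G$ with the $A_1$–$B_1$ edges deleted, any chordless path from $K_1$ to $K_3$ with interior in $L$ must pass through $A_2\cup A_3$ (since its interior, lying in $L$, can only reach $A_1$ through a vertex of $A_2\cup A_3$, these being the only vertices of $L$ with neighbors in $A_1$), and such a vertex is complete to $A_1=K_1$; I'd want to state this carefully. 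For (iv): since $H$ is an \emph{odd} hyperprism, I expect $A_1$ is anticomplete to $B_1$ — this is the one place the odd case genuinely differs from the even case in spirit, though actually in both even and odd hyperprisms $A_i$ need not be complete/anticomplete to $B_i$ in general; here I should instead observe that by definition of a hyperprism there are no edges between $A_1\cup B_1\cup C_1$ and $A_2\cup B_2\cup C_2$ other than $A_1$–$A_2$ and $B_1$–$B_2$, and argue that no vertex of $L$ has neighbors in both $K_1$ and $K_3$, or that $K_1$ is anticomplete to $K_3$; the cleanest route is to note $A^*_1\cup B^*_1$ nonempty for at most one strip so I can relabel to make $A_1$ anticomplete to $B_1$. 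For (v): the sets $C_1,C_2,C_3$ are nonempty, so picking $x_i\in C_i$ gives a triad (no edges between distinct strips' $C$-parts, and each $C_i$ is a path interior so... actually $x_i$ need not be nonadjacent to other vertices of $C_i$, but distinct $C_i$'s are mutually anticomplete, so $\{x_1,x_2,x_3\}$ is a triad) with $x_1\in R$ and $x_2,x_3\in L$, and $L,R$ are both nonempty and anticomplete. The main obstacle I anticipate is getting axioms (iii) and (iv) exactly right: specifically, confirming that $M\cup F_A\cup F_B$ behaves correctly with respect to $K_1$ and $K_3$ (the vertices of $F_A$ attach only to the $A$-side and so have no neighbor in $K_3=B_1$, and symmetrically for $F_B$; but $M$-vertices may have neighbors in both $A_1$ and $B_1$ — however $M\subseteq K_2$, not $L$, so (iv) only concerns $L$), and that the relabelling needed to make both $M\cup A_1$, $M\cup B_1$ cliques is compatible with the relabelling needed to make $A_1$ anticomplete to $B_1$ — since at most one strip has $A^*_i\cup B^*_i$ nonempty, I have enough freedom. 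With these checks, $(K_1,K_2,K_3,L,R)$ is a good partition.
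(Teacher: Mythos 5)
Your overall strategy matches the paper's (maximal odd hyperprism, locality of attachments via Lemma~\ref{lem:hyplocal}, Theorem~\ref{thm:ohyp} plus maximality to control $M$, then a cutset built from $M$ and one strip's ends), but there is a genuine gap at the final step, precisely at the point you flag as a worry. You need an index $h$ such that \emph{both} $M\cup A_h$ and $M\cup B_h$ are cliques \emph{and} $A_h$ is anticomplete to $B_h$. Lemma~\ref{C4cliques} only guarantees that $M\cup A_i$ is a clique for two values of $i$ and $M\cup B_j$ for two values of $j$; these two 2-element sets are only guaranteed to intersect in a single index $h$, and that $h$ can be exactly the (unique) strip with edges between $A_h$ and $B_h$ (e.g.\ $M\cup A_1,M\cup A_3$ and $M\cup B_2,M\cup B_3$ are cliques, while strip $3$ has length-one rungs). "At most one strip has $A_i^*\cup B_i^*$ nonempty" does not give you enough freedom to avoid this collision. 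In that case your partition $K_1=A_3$, $K_3=B_3$ breaks down: condition (iii) is no longer witnessed by a complete-to-$K_1$ vertex in the intended way, $C_3$ may be empty so $R=C_3\cup F_3\cup Z$ may be empty, and the triad may not exist. The paper handles this case with a genuinely different cutset: after arranging $A_1$ anticomplete to $B_1$ and $A_2$ anticomplete to $B_2$, it takes $K_1=B_2\cup B_3$, $K_2=M$, $K_3=A_1\cup A_3$, with $L=B_1\cup C_1\cup F_1\cup F_B$ and $R=A_2\cup C_2\cup F_2\cup C_3\cup F_3\cup F_A\cup Z$, and verifies (iii)--(v) for that configuration. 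This second construction is not a cosmetic relabelling of the first; it is the essential content that distinguishes the odd case from the even one, and your proposal is missing it.

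Two smaller points in the same vein. First, your argument that $M$ is a clique copies the even case, but in an odd hyperprism the common neighbors $a$ and $b$ of two non-adjacent major vertices may be forced into the same strip and may be adjacent (length-one rungs), so $\{x,y,a,b\}$ need not be a square; the paper has to split into cases and, when the square is unavailable, exhibit a $C_5$ on $\{m_1,m_2,a_1,b_2,b_3\}$. Second, $C_3$ need not be nonempty in an odd hyperprism, so your triad $\{x_1,x_2,x_3\}$ with $x_i\in C_i$ is not available; after the relabelling one only gets $C_1,C_2\neq\emptyset$, and the third triad vertex must be taken in $A_3$. Both of these are repairable, but together with the missing $h=3$ construction they show the odd case cannot simply be read off from Theorem~\ref{thm:epr}.
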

\begin{proof} 
Since $G$ contains an odd prism, it contains a maximal odd hyperprism
$(A_1, C_1, B_1, A_2, C_2, B_2, A_3, C_3, B_3)$ which we call $H$.
Let $M$ be the set of major neighbors of $H$.  Let $M_{good}$ be the
set of all the vertices of $M$ that are complete to at least two of
$A_1,A_2,A_3$ and to at least two of $B_1,B_2,B_3$, and let $M_{bad}$
be the remaining major neighbors of $H$.  Let $Z$ be the set of
vertices of the components of $V(G)\setminus (V(H)\cup M)$ that have
no attachment in $H$.  Since $H$ is maximal, by
Lemma~\ref{lem:hyplocal} there is a partition of $V(G)\setminus
(V(H)\cup M\cup Z)$ into sets $F_1$, $F_2$, $F_3$, $F_A$, $F_B$ such
that:
\begin{itemize}
\item
For $i=1,2,3$, $N(F_i)\subseteq A_i\cup C_i\cup B_i\cup M$;
\item
$N(F_A)\subseteq A_1\cup A_2\cup A_3\cup M$ and $N(F_B)\subseteq
B_1\cup B_2\cup B_3\cup M$;
\item
The sets $Z, F_1, F_2, F_3, F_A, F_B$ are pairwise anticomplete to
each other.
\end{itemize}

We observe that:
\begin{equation}\label{eq:opr1}
\longbox{At least two of $A_1,A_2,A_3$ are cliques, and at least two
of $B_1,B_2,B_3$ are cliques.}
\end{equation}
This follows directly from Lemma~\ref{C4cliques} (with $K=\emptyset$).

\medskip

Since $H$ is maximal, \ref{thm:ohyp} implies that:
\begin{equation}\label{eq:opr5}
\longbox{Let $m \in M$.  For every $i \in \{1,2,3\}$, $m$ is complete
to $A_i^* \cup B_i^*$.  Moreover, if $i,j \in \{1,2,3\}$ are distinct
and $A_i^*=A_j^*=\emptyset$, then $m$ is complete to at least one of
$A_i,A_j$ and to at least one of $B_i,B_j$.}
\end{equation}

We claim that:
\begin{equation}\label{eq:opr6}
\mbox{$M$ is a clique.}
\end{equation}
Suppose that there are non-adjacent vertices $m_1, m_2$ in $M$.  By
(\ref{eq:opr5}), $m_1$ and $m_2$ have a common neighbor in $A_1\cup
A_2\cup A_3$.  Therefore let $a_1$ be a common neighbor of $m_1$ and
$m_2$ in $A_1$.  If $m_1$ and $m_2$ have a common neighbor $b \in
B_2\cup B_3$, then $\{m_1,m_2,a_1,b\}$ induces a square, a
contradiction.  In view of \eqref{eq:opr5}, $A_2^*=A_3^*=\emptyset$,
and we may assume up to symmetry that $m_1$ is not complete to $B_2$,
so it is complete to $B_3$, and consequently that $m_2$ is not
complete to $B_3$, and so it is complete to $B_2$.  Pick a
non-neighbor $b_2$ of $m_1$ in $B_2$ and a non-neighbor $b_3$ of $m_2$
in $B_3$.  Then $\{m_1, m_2, a_1, b_2, b_3\}$ induces a $C_5$, a
contradiction.  This proves~(\ref{eq:opr6}).
\begin{equation}\label{eq:opr7}
\longbox{$M_{good}\cup A_i$ is a clique for at least two values of
$i$, and similarly $M_{good} \cup B_j$ is a clique for at least two
values of $j$.}
\end{equation}
This follows directly from (\ref{eq:opr5}), (\ref{eq:opr6}) and
Lemma~\ref{C4cliques}.  Thus (\ref{eq:opr7}) holds.

\begin{equation}\label{eq:opr7new}
\longbox{Let $j,k \in \{1,2,3\}$ be distinct and such that
$A_j^*=A_k^*=\emptyset$.  Then either $M \cup A_j$ or $M \cup A_k$ is
a clique, and similarly either $M\cup B_j$ or $M \cup B_k$ is a
clique.}
\end{equation}
This follows directly from (\ref{eq:opr5}), (\ref{eq:opr6}) and
Lemma~\ref{C4cliques} applied to $M,A_j,A_k$ and to $M,B_j,B_k$.
Thus (\ref{eq:opr7new}) holds.

\medskip

Since $G$ is square-free, we may assume, up to symmetry, that $A_1$ is
anticomplete to $B_1$, and that $A_2$ is anticomplete to $B_2$, and so
$C_1\neq\emptyset$ and $C_2\neq\emptyset$.  Pick any $x_1\in C_1$,
$x_2\in C_2$ and $a_3\in A_3$.  So $\{x_1, x_2, a_3\}$ is a triad
$\tau$.

Suppose that both $M \cup A_1$ and $M \cup B_1$ are cliques.  Set
$K_1=A_1$, $K_2=M$, $K_3=B_1$, $L=A_2\cup B_2\cup C_2\cup F_2\cup
A_3\cup B_3\cup C_3\cup F_3\cup F_A\cup F_B$ and $R=V(G)\setminus
(K_1\cup K_2\cup K_3\cup L)$.  (So $R=C_1\cup F_1\cup Z$.)  We observe
that $K_1$ is anticomplete to $K_3$, every path from $K_3$ to $K_1$
with interior in $L$ contains a vertex from $A_2\cup A_3$, which is
complete to $K_1$, and $\tau$ is a triad with a vertex in $L$ and a
vertex in $R$.  Thus $(K_1, K_2, K_3, L, R)$ is a good partition of
$V(G)$.  The same holds if both $M \cup A_2$ and $M \cup B_2$ are
cliques.

By \eqref{eq:opr7new}, we may assume that $A_3^* \neq \emptyset$, $M
\cup A_1$ and $M \cup B_2$ are cliques, and neither of $M \cup A_2$
and $M \cup B_1$ is a clique.  Suppose that $M_{bad}=\emptyset$.  Then
by \eqref{eq:opr7} $M \cup A_3$ and $M \cup B_3$ are both cliques.
Set $K_1=A_1$, $K_2=M$, $K_3=B_2 \cup B_3$, $L=A_2\cup C_2\cup F_2\cup
A_3 \cup C_3\cup F_3\cup F_A$ and $R=V(G)\setminus (K_1\cup K_2\cup
K_3\cup L)$.  (So $R=C_1\cup B_1 \cup F_1\cup Z$.)  We observe that
$K_1$ is anticomplete to $K_3$, every path from $K_3$ to $K_1$ with
interior in $L$ contains a vertex from $A_2\cup A_3$, which is
complete to $K_1$, and $\tau$ is a triad with a vertex in $L$ and a
vertex in $R$.  Thus $(K_1, K_2, K_3, L, R)$ is a good partition of
$V(G)$.  Thus we may assume that $M_{bad}\neq\emptyset$.  Let $M_A$ be
the set of vertices of $M$ with a non-neighbor in both $A_2$ and
$A_3$, and let $M_B$ be the set of vertices of $M$ with a non-neighbor
in both $B_1$ and $B_3$.  So $M_{bad}=M_A\cup M_B$.  By
Lemma~\ref{C4cliques} applied with $K=M \setminus M_B$, $X_1=B_1$ and
$X_2=B_3$, we deduce that either $(M \setminus M_B) \cup B_3$ is a
clique, or $(M \setminus M_B) \cup B_1$ is a clique.

Suppose first that $(M \setminus M_B) \cup B_1$ is a clique.  Then,
since $M \cup B_1$ is not a clique, it follows that $M_B \neq
\emptyset$.  Let $m \in M_B$ be chosen with $N(m) \cap B_1$ maximal;
let $B_1'=B_1 \cap N(m)$ and $B_1'' = B_1 \setminus B_1'$.  Since
$B_1,M$ are both cliques and $G$ has no $C_4$, it follows from the
choice of $m$ that $M_B$ is anticomplete to $B_1''$.  Let $C_1''$ be
the set of vertices of $C_1$ that are in rungs with vertices of
$B_1''$, let $A_1''$ be the set of vertices in $A_1$ that are in rungs
with vertices of $B_1''$, and let $F_1''$ be the set of vertices of
$F_1$ such that there is a path from them to $B_1'' \cup C_1''$ with
interior in $F_1''$.  Recall that every vertex of $M_B$ has a
non-neighbor in $B_1$ and a non-neighbor in $B_3$, and so the second
outcome of Theorem~\ref{thm:ohyp} holds.  It follows that the set
$C_1'' \cup F_1''$ is anticomplete to $M_B \cup (C_1 \setminus C_1'')
\cup (F_1 \setminus F_1'')$, and $M_B$ is complete to $A_1''$.  Now
set $K_1=A_1''$, $K_2=M \setminus M_B$, $K_3=B_1''$, $R=C_1'' \cup
F_1''$ and $L=V(G) \setminus (K_1 \cup K_2 \cup K_3 \cup R)$.  By
Theorem~\ref{thm:ohyp} the set $L$ is anticomplete to $R$.  We know
that $K_1\cup K_2$ is a clique (because $M\cup A_1$ is a clique) and
$K_2\cup K_3$ is a clique (because of the current assumption that $(M
\setminus M_B) \cup B_1$ is a clique).  Moreover, $K_1$ is
anticomplete to $K_3$, and, again by Theorem~\ref{thm:ohyp}, every
path from $K_3$ to $K_1$ with interior in $L$ contains a vertex of
$A_2 \cup A_3 \cup (A_1 \setminus A_1'') \cup M_B$, which is complete
to $A_1''$.  We may assume that $\tau \cap C_1 \subseteq C_1''$, and
thus $\tau$ is a triad with a vertex in $L$ and a vertex in $R$.
Consequently, $(K_1,K_2,K_3,L,R)$ is a good partition.

\medskip

Hence we may assume that $(M \setminus M_B) \cup B_1$ is not a clique,
and so $(M \setminus M_B) \cup B_3$ is a clique.  By symmetry we may
assume that $(M \setminus M_A) \cup A_3$ is a clique.

Switching the roles of $A$ and $B$ if necessary, we may assume that
$M_B\neq\emptyset$.  Let $B_1''$ be the set of vertices in
$B_1$ that are not complete to $M_B$, and let $B_1' = B_1 \setminus
B_1''$.  So $B_1''\neq\emptyset$.  Let $C_1''$ be the set of vertices
of $C_1$ that are in rungs with vertices of $B_1''$, let $A_1''$ be
the set of vertices in $A_1$ that are in rungs with vertices of
$B_1''$, and let $F_1''$ be the set of vertices of $F_1$ such that
there is a path from them to $B_1'' \cup C_1''$ with interior in
$F_1''$.  By \ref{thm:ohyp} the set $C_1'' \cup F_1''$ is anticomplete
to $(C_1 \setminus C_1'') \cup (F_1 \setminus F_1'')$; recall that $M
\cup A_1$ is a clique.  Let $A_3''$ be the set of vertices of $A_3$
that are not complete to $M_A$, and let $A_3' =A_3 \setminus A_3''$.
Let $C_3''$ be the set of vertices of $C_3$ in rungs with $A_3''$, and
$F_3''$ the set of vertices of $F_3$ such that there is a path to them
from $A_3'' \cup C_3''$ with interior in $F_3''$.  Let $C_3'=C_3
\setminus C_3''$ and $F_3'=F_3 \setminus F_3''$.

We claim that $M \setminus M_B$ is complete to $B_1'$.  Suppose $m'
\in M \setminus M_B$ has a non-neighbor in $b_1' \in B_1'$.  Choose $m
\in M_B$.  Since $m \in M_B$, some $b_3 \in B_3$ is non-adjacent to
$m$.  Now $\{m,m',b_1',b_3\}$ induces a $C_4$, a contradiction.  

Let $K_1=A_1'' \cup A_3'$, $K_2=M$, $K_3=B_1' \cup B_2 \cup B_3^*$,
$R=B_1'' \cup C_1'' \cup F_1'' \cup F_B \cup (B_3 \setminus B_3^*)
\cup C_3' \cup F_3'$ and $L=V(G) \setminus (K_1 \cup K_2 \cup K_3 \cup
R)$.  By Theorem~\ref{thm:ohyp} the set $L$ is anticomplete to $R$,
and every rung of $H$ with an end in $A_3''$ has its other end in
$B_3^*$.  Since $M \cup A_1$ is a clique, $A_3 \cup (M \setminus M_A)$
is a clique, and since by the definition of $A_3'$, $M_A$ is complete
to $A_3'$, it follows that $K_1 \cup K_2$ is a clique.  Since $M \cup
B_2$ is a clique, $M \setminus M_B$ is complete to $B_1'$, by the
definition of $B_1'$, $M_B$ is complete to $B_1'$, $B_3 \cup (M
\setminus M_B)$ is a clique, and by Theorem~\ref{thm:ohyp} $M_B$ is
complete to $B_3^*$, it follows that $K_2 \cup K_3$ is a clique.

We now check that condition (iii) in the definition of a good
partition holds.  Suppose that $P$ is a path from $K_3$ to $K_1$ with
$P^* \neq \emptyset$ and $P^* \subseteq L$.  We may assume that $V(P)$
is disjoint from $A_2 \cup A_3'' \cup (A_1 \setminus A_1'')$, which is
complete to $A_1'' \cup A_3'$ (recall that $A_1$ and $A_3$ are both
cliques).  It follows from Theorem~\ref{thm:ohyp} that the ends of $P$
are in $A_3'$ and in $B_3^*$, and $P^* \subseteq C_3'' \cup F_3''$.
In particular, $A_3'' \neq \emptyset$.  We claim that some $m \in M_A$
is anticomplete to $A_3''$.  Choose $m \in M_A$ with $N(m) \cap A_3''$
minimal.  We may assume that $A_3'' \cap N(m) \neq \emptyset$; let $a
\in A_3''$ be a neighbor of $m$.  It follows from the definition of
$A_3''$ that some $m' \in M_A$ is non-adjacent to $a$.  By the choice
of $m$, there is $a' \in A_3''$ adjacent to $m'$ and not to $m$.  But
now since $A_3$ and $M_A$ are both cliques, $\{m,m',a,a'\}$ induces a
$C_4$, a contradiction.  This proves the claim; let $m \in M_A$ be
anticomplete to $A_3''$.  By Theorem~\ref{thm:ohyp} $m$ is
anticomplete to $C_3'' \cup F_3''$, and so $m$ is anticomplete to
$P^*$.  But now $V(P) \cup \{m\}$ induces an odd hole, a
contradiction.  This proves that condition (iii) holds.

Next we check that condition (iv) in the definition of a good
partition is satisfied.  Suppose that some $l \in L$ has a neighbor
$k_1 \in K_1$ and a neighbor $k_3 \in K_3$.  Then $l \in C_3'' \cup
F_3''$.  Since $H$ is an odd prism, $k_1$-$l$-$k_3$ is not a rung of
length two, and therefore $k_1$ is adjacent to $k_3$.  This proves
that condition (iv) holds.  Finally, we may assume that $\tau \cap C_1
\subseteq C_1''$, and thus $\tau$ is a triad with a vertex in $L$ and
a vertex in $R$.  This proves that $(K_1,K_2,K_3,L,R)$ is a good
partition, and completes the proof of the theorem.
\end{proof}

\section{Line-graphs}

The goal of this section will be to prove the following decomposition
theorem.

\begin{theorem}\label{thm:lgb}
Let $G$ be a square-free Berge graph, and assume that $G$ contains the
line-graph of a bipartite subdivision of $K_4$.  Then $G$ admits a
good partition.
\end{theorem}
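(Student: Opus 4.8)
The plan is to mimic the strategy used for even and odd prisms in Theorems~\ref{thm:epr} and~\ref{thm:opr}, but with a structural object tailored to the line-graph of a bipartite subdivision of $K_4$ in place of a hyperprism. Write $L(J)$ for the line-graph of a bipartite subdivision $J$ of $K_4$; its vertices correspond to edges of $J$, and $L(J)$ consists of four triangles (the ``branch cliques'' at the four degree-$3$ vertices of $K_4$) joined by six internally-disjoint chordless paths (the ``rungs'' corresponding to the six subdivided edges of $K_4$), with consecutive paths sharing exactly one vertex at each branch clique. The first step is to define the appropriate ``thickened'' analogue: a configuration in which each branch clique is blown up to a set and each rung to a strip $(A_i,C_i,B_i)$, with the completeness conditions between strips that meet at a common branch clique, and with each vertex lying on some rung of its strip; one then takes a \emph{maximal} such configuration $H$ in $G$. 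As in the hyperprism case, this requires first checking that $G$ actually contains such a thickened object (starting from the given induced $L(J)$, which is the degenerate case with all sets singletons) — that part is immediate since the singleton configuration already witnesses existence.

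Next I would establish the attachment structure of $G$ relative to $H$. One proves an analogue of Lemma~\ref{lem:hyplocal} for this configuration: a component $F$ of $G$ disjoint from $V(H)$ and from the set $M$ of ``major'' vertices (suitably defined: vertices with too many neighbors across the strips, linkable to several branch triangles) must attach locally, i.e.\ to a single strip or to a bounded union of strips near one branch clique, for otherwise one could grow $H$ or, using Lemma~\ref{lem:link} and Theorems~\ref{spgt101}/\ref{spgt74} applied to suitable prism instances sitting inside $L(J)$, produce a forbidden configuration. This partitions $V(G) \setminus (V(H)\cup M)$ into the blobs $Z$ (non-attaching), one $F_i$ per strip, and $F_e$ per branch clique $e$, all pairwise anticomplete in the useful way. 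Then I would analyze $M$: using square-freeness together with the fact that two non-adjacent major vertices would have a common neighbor in at least two branch cliques (hence a square) or create a $C_5$, conclude that $M$ is a clique, and then use Lemma~\ref{C4cliques} with $K=M$ to deduce that $M\cup A_i$ (resp.\ $M\cup B_i$) is a clique for all but at most one strip at each branch clique.

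Finally, I would read off a good partition. Pick a branch clique and a strip emanating from it, say with sets playing the roles of $K_1$ (one end-blob), $K_3$ (another end-blob of a strip anticomplete to it — guaranteed by square-freeness, which forces at least two of the strips at $K_4$ to have anticomplete ends and nonempty middle $C_i$), and $K_2=M$, choosing the indices so that $K_1\cup K_2$ and $K_2\cup K_3$ are cliques, with everything else split into $L$ (the side containing the rest of the branch structure, forcing any $K_3$–$K_1$ path through $L$ to pass through a set complete to $K_1$) and $R$ (the $C_i$'s and $F_i$'s and $Z$ on the far side). Nonempty $C_i$'s give a triad meeting both $L$ and $R$, and anticompleteness of $K_1$ to $K_3$ (or the absence of an $L$-vertex with neighbors in both, in the case $K_1 = B_i\cup B_j$, $K_3 = A_k\cup A_\ell$) handles conditions~(iii) and~(iv). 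The main obstacle I expect is the case analysis for condition~(iii): verifying that \emph{every} chordless path from $K_3$ to $K_1$ with interior in $L$ meets a vertex complete to $K_1$ requires understanding exactly how the six rungs of $L(J)$ interconnect the four branch cliques, and in particular which choice of the cut strip and of the bipartition $(L,R)$ of the remaining strips makes this work — this is where the combinatorics of $K_4$ (as opposed to the single triangle governing a hyperprism) genuinely enters, and it will likely need splitting into a couple of sub-cases according to which pairs of branch cliques have $M$-complete blow-ups.
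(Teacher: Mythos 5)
Your high-level plan (thicken $L(H)$ into a strip structure, take it maximal, show that non-major components attach locally, show the major set $M$ is a clique via square-freeness and Lemma~\ref{C4cliques}, and read off the partition) is indeed the paper's plan, but two essential difficulties are missing, and they are exactly where the real work lies.

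First, you cannot restrict attention to thickenings of $L(K_4)$. When a non-major component $F$ attaches non-locally, the dichotomy is not ``grow $H$ or find a forbidden configuration'': one outcome of the local-attachment lemma (the paper's Lemma~\ref{SPGT85var}, i.e.\ 8.5 of \cite{CRST}) is the appearance of the line-graph of a bipartite subdivision of a \emph{$J$-enlargement} --- a strictly larger $3$-connected graph --- and such a graph is perfectly legal in a square-free Berge graph, so it is neither forbidden nor a larger $K_4$-strip system. Maximality among $K_4$-strip systems is therefore not enough; the paper first chooses the $3$-connected graph $J$ itself maximal among those with an appearance in $G$, and only then a maximal $J$-strip system. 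Your final construction of the good partition would then have to work for a general $3$-connected $J$, not just $K_4$, which the paper handles through the ``rich strip'' case analysis.

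Second, and more seriously, ``major'' is not well-defined relative to the strip system alone: a vertex can saturate one choice of rungs and fail to saturate another, so it is neither cleanly in $M$ nor a local attacher, and both the local-attachment argument and your square/$C_5$ argument that $M$ is a clique break down for such vertices. In the hyperprism setting this is resolved by Theorems~\ref{thm:ehyp} and \ref{thm:ohyp} (such a vertex can be absorbed into the hyperprism), but for strip systems the analogous analysis (the paper's Lemma~\ref{specialcase}) shows the phenomenon can genuinely occur and forces a rigid ``special $K_4$-strip system,'' which then requires its own separate good-partition construction (Lemmas~\ref{specialcase2}--\ref{specialgood}). Your proposal has no counterpart to this; together with the unfinished verification of condition~(iii) in the last step --- which you correctly flag as the main obstacle but do not carry out --- this leaves the proof incomplete.
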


Before proving this theorem, we need some definitions from
\cite{CRST}.

In a graph $H$, a \emph{branch} is a path whose interior vertices have
degree~$2$ and whose ends have degree at least~$3$.  A
\emph{branch-vertex} is any vertex of degree at least~$3$.   

In a graph $G$, an \emph{appearance} of a graph $J$ is any induced
subgraph of $G$ that is isomorphic to the line-graph $L(H)$ of a
bipartite subdivision $H$ of $J$.  An appearance of $J$ is
\emph{degenerate} if either (a) $J=H=K_{3,3}$ (the complete bipartite
graph with three vertices on each side) or (b) $J=K_4$ and the four
vertices of $J$ form a square in $H$.  Note that a degenerate
appearance of a graph contains a square since in either case (a) or
(b) the graph $H$ contains a square.  An appearance $L(H)$ of $J$ in
$G$ is \emph{overshadowed} if there is a branch $B$ of $H$, of length
at least $3$, with ends $b_1,b_2$, such that some vertex of $G$ is
non-adjacent in $G$ to at most one vertex in $\delta_H(b_1)$ and at
most one in $\delta_H(b_2)$, where $\delta_H(b)$ denotes the set of
edges of $H$ (vertices of $L(H)$) of which $b$ is an end.

An \emph{enlargement} of a $3$-connected graph $J$ (also called a
\emph{$J$-enlargement}) is any $3$-connected graph $J'$ such that
there is a proper induced subgraph of $J'$ that is isomorphic to a
subdivision of $J$.

To obtain a decomposition theorem for graphs containing line graphs of
bipartite graphs, we first thicken the line graph into an object
called a strip system, and then study how the components of the rest
of the graph attach to the strip system.

\medskip

Let $J$ be a $3$-connected graph, and let $G$ be a Berge graph.  A
\emph{$J$-strip system $(S,N)$ in $G$} means
\begin{itemize}
\item 
for each edge $uv$ of $J$, a subset $S_{uv}=S_{vu}$ of $V(G)$,
\item 
for each vertex $v$ of $J$, a subset $N_v$ of $V(G)$,
\item
$N_{uv} = S_{uv} \cap N_u$, 
\end{itemize}
satisfying the following conditions (where for $uv \in E(J)$, a
\emph{$uv$-rung} means a path $R$ of $G$ with ends $s, t$, say, where
$V(R) \subseteq S_{uv}$, and $s$ is the unique vertex of $R$ in $N_u$,
and $t$ is the unique vertex of $R$ in $N_v$):
\begin{itemize}
\item 
The sets $S_{uv}$ ($uv \in E(J)$) are pairwise disjoint;
\item
For each $u\in V(J)$, $N_u\subseteq \bigcup_{uv\in E(J)}S_{uv}$;
\item 
For each $uv \in E(J)$, every vertex of $S_{uv}$ is in a $uv$-rung;
\item 
For any two edges $uv,wx$ of $J$ with $u,v,w,x$ all distinct, there
are no edges between $S_{uv}$ and $S_{wx}$;
\item 
If $uv$, $uw$ in $E(J)$ with $v \neq w$, then $N_{uv}$ is complete to
$N_{uw}$ and there are no other edges between $S_{uv}$ and $S_{uw}$;
\item 
For each $uv\in E(J)$ there is a special $uv$-rung such that for every
cycle $C$ of $J$, the sum of the lengths of the special $uv$-rungs for
$uv\in E(C)$ has the same parity as $|V(C)|$.
\end{itemize}
The vertex set of $(S,N)$, denoted by $V(S,N)$, is the set
$\bigcup_{uv\in E(J)} S_{uv}$.

Note that $N_{uv}$ is in general different from $N_{vu}$.  On the
other hand, $S_{uv}$ and $S_{vu}$ mean the same thing.

The following two properties follow from the definition of a strip
system:
\begin{itemize}
\item
For distinct $u,v\in V(J)$, we have $N_u\cap N_v\subseteq S_{uv}$ if
$uv\in E(J)$, and $N_u\cap N_v=\emptyset$ if $uv\not\in E(J)$.
\item
For $uv\in E(J)$ and $w\in V(J)$, if $w\neq u,v$ then $S_{uv}\cap
N_w=\emptyset$.
\end{itemize}
In 8.1 from \cite{CRST} it is shown that for every $uv\in E(J)$, all
$uv$-rungs have lengths of the same parity.  It follows that the final
axiom is equivalent to:
\begin{itemize}
\item
For every cycle $C$ of $J$ and every choice of $uv$-rung for every
$uv\in E(C)$, the sums of the lengths of the $uv$-rungs has the same
parity as $|V(C)|$.  In particular, for each edge $uv\in E(J)$, all
$uv$-rungs have the same parity.
\end{itemize}

By a {\em choice of rungs} we mean the choice of one $uv$-rung for
each edge $uv$ of $J$.  It follows from the preceding points that for
every choice of rungs the subgraph of $G$ induced by their union is
the line-graph of a bipartite subdivision of $J$.

\medskip

We say that a subset $X$ of $V(G)$ \emph{saturates} the strip system
if for every $u \in V(J)$ there is at most one neighbor $v$ of $u$
such that $N_{uv} \not\subseteq X$.  A vertex $v$ in $V(G)\setminus
V(S,N)$ is \emph{major} with respect to the strip system if the set of
its neighbors saturates the strip system.  A vertex $v\in
V(G)\setminus V(S,N)$ is {\em major with respect to some choice of
rungs} if the $J$-strip system defined by this choice of rungs is
saturated by the set of neighbors of $v$.

A subset $X$ of $V(S,N)$ is \emph{local} with respect to
the strip system if either $X \subseteq N_v$ for some $v \in V(J)$ or
$X \subseteq S_{uv}$ for some edge $uv \in E(J)$.

\begin{lemma}\label{SPGT85var}
Let $G$ be a Berge graph, let $J$ be a $3$-connected graph, and let
$(S,N)$ be a $J$-strip system in $G$.  Assume moreover that if $J=K_4$
then $(S,N)$ is non-degenerate and that no vertex is major for some
choice of rungs and non-major for another choice of rungs.  Let
$F\subseteq V(G)\setminus V(S,N)$ be connected and such that no member
of $F$ is major with respect to $(S,N)$.  If the set of attachments of
$F$ in $V(S,N)$ is not local, then one can find in polynomial time one
of the following:
\begin{itemize}
\item 
A  path $P$, with $\emptyset\neq V(P)\subseteq V(F)$,
such that $V(S,N)\cup V(P)$ induces a $J$-strip system. 
\item 
A  path $P$, with $\emptyset\neq V(P)\subseteq V(F)$, and for
each edge $uv\in E(J)$ a $uv$-rung $R_{uv}$, such that $V(P)\cup
\bigcup_{uv\in E(J)} R_{uv}$ is the line-graph of a bipartite
subdivision of a $J$-enlargement.
\end{itemize}
\end{lemma}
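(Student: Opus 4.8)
The plan is to mimic the structure of the proof of Theorem~8.5 of \cite{CRST} (the ``SPGT 8.5'' statement about $J$-strip systems), adapting it exactly as Lemma~\ref{lem:hyplocal} adapted the hyperprism argument of that paper, and taking advantage of the two extra hypotheses we have built into the statement (non-degeneracy when $J=K_4$, and the stability of the ``major'' property under choices of rungs). First I would fix a choice of rungs, giving a concrete appearance $L(H_0)$ of $J$ inside $V(S,N)$, and consider the connected set $F$ together with its attachments. Because the attachments are not local, there are (at least) two edges $uv,wx\in E(J)$, or two distinct vertices of $J$, that are ``seen'' by $F$ in an incompatible way; the point of the argument is to walk along a minimal connected piece of $F$ realizing this non-locality and read off, edge by edge of $J$, how it attaches. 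Since no vertex of $F$ is major, at each branch-vertex $u$ of $H_0$ there are at least two edges $uv$ with $N_{uv}\not\subseteq N(F)$, which is exactly what lets us avoid creating odd holes when we splice pieces of $F$ into rungs.

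The key steps, in order: (1) reduce to the case that $F$ is a chordless path $P=f_1\cdots f_n$ whose only attachments to $V(S,N)$ are ``clean'' ones at the two ends, by taking $P$ minimal with non-local attachments and using connectivity plus the no-major hypothesis to prune chords and superfluous edges to $V(S,N)$ — this is the analogue of reducing to the path in Theorem~\ref{spgt101} and in SPGT~8.5; (2) case-analyze how $f_1$ (resp.\ $f_n$) attaches: either it has two adjacent neighbours inside a single rung $R_{uv}$ (a ``subdivision-type'' attachment) or it is complete to some $N_{uv}\cup N_{uw}$-type set at a branch-vertex (a ``branch-vertex-type'' attachment); non-locality forces the two ends to be of different edges/vertices of $J$; (3) in the case where both ends are subdivision-type inside rungs of two edges sharing no vertex, or one end is branch-vertex-type, one shows $V(P)\cup\bigcup R_{uv}$ is the line-graph of a bipartite subdivision of a graph obtained from $J$ by adding a handle between two branches — i.e.\ a $J$-enlargement — which is $3$-connected because $J$ is and we attach a path between two distinct branches; here the Berge (odd-hole-free) property and the non-degeneracy hypothesis for $K_4$ are used to rule out that the new object collapses or creates a square; (4) in the remaining case the path $P$ can be absorbed as (part of) a new rung or a new strip, so $V(S,N)\cup V(P)$ is again a $J$-strip system — one has to verify the six axioms, the parity axiom being the delicate one, and this is where the ``no vertex is major for one choice of rungs and non-major for another'' hypothesis is invoked to make the special-rung parities consistent; (5) finally observe that each of these steps is algorithmic: finding $F$, the minimal path, the relevant rungs, and checking the axioms are all polynomial, so the whole construction runs in polynomial time.

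The main obstacle I expect is step~(3)/(4): correctly identifying, from the combinatorial pattern of how $f_1$ and $f_n$ attach, \emph{which} of the two conclusions holds, and in the enlargement case verifying $3$-connectivity and the parity/line-graph structure of the enlarged object. In \cite{CRST} this is the heart of the proof of 8.5 and it is long; the honest thing to do here — exactly as in the proof of Lemma~\ref{lem:hyplocal} — is to state that the argument is that of \cite{CRST} (8.5) with the degenerate cases excluded by hypothesis and with the parities of the relevant branches handled as in the ``even/odd swap'' adjustment already described, rather than to reproduce the full case analysis. A secondary subtlety is bookkeeping the difference between $N_{uv}$ and $N_{vu}$ when a new branch is attached at a branch-vertex, but this is routine once the pattern in step~(2) is pinned down.
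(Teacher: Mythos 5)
Your proposal ends up in exactly the same place as the paper: the paper's proof of this lemma is a two-sentence deferral to Theorem~8.5 of \cite{CRST}, noting that the only role of the ``no overshadowed appearance'' hypothesis there is to guarantee that no vertex is major for one choice of rungs and non-major for another, which is assumed here directly. Your sketch of the internal structure of 8.5 is a reasonable gloss, and your closing recommendation to simply invoke 8.5 with the substituted hypothesis is precisely what the paper does.
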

\begin{proof}
The proof of this lemma is essentially the same as the proof of
Theorem~8.5 in \cite{CRST}.  In 8.5 there is an assumption that there
is no overshadowed appearance of $J$; but all that is used is that no
vertex is major for some choice of rungs of $(S,N)$ and non-major for
another.
\end{proof}


We say that a $K_4$-strip system $(S,N)$ in a graph $G$ is {\em
special} if it satisfies the following properties, where for all
$i,j\in [4]$, $O_{ij}$ denotes the set of vertices in $V(G)\setminus
V(S,N)$ that are complete to $(N_i\cup N_j)\setminus S_{ij}$ and
anticomplete to $V(S,N)\setminus (N_i\cup N_j\cup S_{ij})$:
\begin{itemize}
\item[(a)] 
$N_{13}=N_{31}=S_{13}$ and $N_{24}=N_{42}=S_{24}$.
\item[(b)] 
Every rung in $S_{12}$ and $S_{34}$ has even length at least $2$, and
every rung in $S_{14}$ and $S_{23}$ has odd length at least $3$. 
\item[(c)] 
$O_{12}$ and $O_{34}$ are both non-empty and complete to each other.
\item[(d)] 
If some vertex of $V(G)\setminus (V(S,N) \cup O_{12}\cup O_{34})$ is
major with respect to some choice of rungs in $(S,N)$, then it is
major with respect to $(S,N)$.  In particular, there is no
overshadowed appearance of $(S,N)$ in $G\setminus (M\cup O_{12}\cup
O_{34})$, where $M$ is the set of vertices that are major with respect
to $(S,N)$.  
\item[(e)] 
There is no enlargement of $(S,N)$ in $G\setminus (O_{12}\cup
O_{34})$, and $(S,N)$ is maximal in $G\setminus (O_{12}\cup O_{34})$.
\end{itemize}

\begin{lemma}\label{specialcase}
Let $G$ be Berge and square-free, and let $J$ be a 3-connected graph.
Let $(S,N)$ be a $J$-strip system in $G$.  Let $m\in V(G)\setminus
V(S,N)$.  If $m$ is major for some choice of rungs in $(S,N)$, then
one of the following holds:
\begin{enumerate}
\item 
There is a $J$-enlargement with a non-degenerate appearance in $G$ 
(and such an appearance can be found in polynomial time).
\item 
There is a $J$-strip system $(S',N')$ such that $V(S,N)\subset
V(S',N')$ with strict inclusion (and $(S',N')$ can be found in
polynomial time).
\item 
$m$ is major with respect to  $(S,N)$.
\item 
$G$ has a special $K_4$-strip system.
\end{enumerate}
\end{lemma}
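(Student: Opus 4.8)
The plan is to start from the hypothesis that $m$ is major for some choice of rungs, and to try to either promote this into a contradiction with the structural assumptions, or produce one of the four outcomes. First I would fix a choice of rungs for which $m$ is major, giving an appearance $L(H_0)$ of $J$, and ask whether $m$ is major with respect to \emph{every} choice of rungs. If yes, then (up to a small argument) $m$ is major with respect to $(S,N)$ and we are in case~3. So assume $m$ is major for some choice and non-major for another; by swapping one rung at a time along a path between these two choices, we find two choices that differ in exactly one rung, say the $uv$-rung, with $m$ major for the first and non-major for the second. This localizes the whole problem to a single strip $S_{uv}$ together with the (fixed) rest of the strip system.

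Next I would apply Lemma~\ref{SPGT85var} to the set $F=\{m\}$ against the strip system determined by the second (non-major) choice of rungs: since $m$ is not major there, and the set of attachments of $m$ is certainly not local (it meets enough of the system to be major on the first choice), we obtain either a chordless path $P\subseteq\{m\}$ enlarging the strip system to a bigger $J$-strip system --- which is case~2 --- or a choice of rungs $R_{uv}$ together with $P$ whose union is the line-graph of a bipartite subdivision of a $J$-enlargement. In the latter case we have found an appearance of a $J$-enlargement; if this appearance is non-degenerate we are in case~1, and if it is degenerate (so $J=K_4$, or $K_{3,3}$, with a square in the underlying bipartite graph) we use square-freeness of $G$: a degenerate appearance contains a square, contradiction. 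So the only real work is when Lemma~\ref{SPGT85var} cannot even be invoked cleanly, i.e.\ when the hypotheses of that lemma fail --- namely when there \emph{is} an overshadowed appearance, or when $J=K_4$ and the system is degenerate. Degeneracy of a $K_4$-strip system again forces a square (by the definition of degenerate appearance), contradicting square-freeness; and an overshadowed appearance gives a branch $B$ of length $\ge 3$ with ends $b_1,b_2$ and a vertex $z$ non-adjacent to at most one edge in $\delta_H(b_1)$ and at most one in $\delta_H(b_2)$, which, combined with the even/odd structure of rungs, is exactly the configuration one exploits to build a special $K_4$-strip system --- case~4.

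Thus the heart of the proof is the overshadowing case, where one must carefully choose the roles of the four ``vertices'' of $K_4$ so that properties (a)--(e) of a special $K_4$-strip system all hold: one puts the overshadowing branch and its two ``parallel'' branches into the $S_{12}$/$S_{34}$/$S_{14}$/$S_{23}$ slots with the prescribed parities, uses the overshadowing vertex $z$ (and, where needed, $m$) to populate $O_{12}$ and $O_{34}$ and verify completeness between them, and invokes maximality together with square-freeness and the no-enlargement analysis already performed to get (d) and (e). The parity bookkeeping in (b) is where Berge-ness (no odd hole) is used repeatedly, in the style of the arguments in Theorems~\ref{thm:ehyp} and~\ref{thm:ohyp}. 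I expect the main obstacle to be precisely this: verifying that in the overshadowed situation one can always orient and label the branches so that the rigid parity requirements of (b) and the completeness requirements of (c) are simultaneously met, rather than merely getting \emph{some} $K_4$-strip system; handling all the symmetric sub-cases of how $z$ and $m$ attach to the branch $B$ is the bulk of the argument, and everything else (cases 1, 2, 3 and the degenerate sub-case) follows quickly from Lemma~\ref{SPGT85var} and square-freeness.
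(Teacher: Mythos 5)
There is a genuine gap here: your proposal correctly sets up the frame (assume outcomes 1--3 fail, find two choices of rungs differing in one rung with $m$ major for one and not the other), but it then defers essentially all of the substance. Two specific problems. First, Lemma~\ref{SPGT85var} cannot be invoked the way you intend: its hypothesis ``no vertex is major for some choice of rungs and non-major for another'' fails \emph{precisely because of $m$ itself} in the case you are trying to handle, so the dichotomy you want (enlarge the system, or find a $J$-enlargement appearance) is not available from that lemma. Even if you instead apply the underlying result to the single instance $L(H')$ with $F=\{m\}$, the ``enlargement of the strip system'' outcome only enlarges $L(H')$, not $(S,N)$, so it does not yield outcome~2 of the statement; and you give no argument for why $J$ must equal $K_4$ in the remaining case (the paper gets this from 8.4 of \cite{CRST} together with the facts that degenerate appearances contain squares and that a non-degenerate appearance of a $J$-enlargement is excluded).

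Second, and more importantly, the construction of the special $K_4$-strip system --- which is the entire point of the lemma --- is absent. You say the overshadowed configuration ``is exactly the configuration one exploits'' and that the parity bookkeeping and sub-case analysis are ``the bulk of the argument,'' but that bulk is exactly what must be supplied. The paper does this not via overshadowing but by applying 5.7 of \cite{CRST} to the non-saturated instance $L(H')$ to pin down the neighborhood $X=N(m)$ (claim (1) in the proof), and then deriving, step by step, that $R_{13}$, $R_{12}$, $R_{24}$ have length $0$, that $R_{14}$ and $R_{23}$ are odd, that every $34$-rung has non-zero even length, that $O_{34}=M^*\setminus M$, and finally that shrinking $S_{12}$ by $O_{12}$ yields a system satisfying (a)--(e), with maximality in $G\setminus(O_{12}\cup O_{34})$ checked via 5.8 of \cite{CRST}. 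None of these verifications is routine, and several (e.g.\ ruling out the case $i=2$ in the application of 5.7, or showing $O_{34}$ is complete to $R_{12}$) use square-freeness in specific ways your sketch does not anticipate. As written, the proposal identifies where the difficulty lies but does not resolve it.
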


\begin{proof}
Let $m$ be major for some choice of rungs in $(S,N)$.  Suppose that
there is no $J$-enlargement with a non-degenerate appearance in $G$,
and $(S,N)$ is maximal in $G$, and that $m$ is not major with respect
to $(S,N)$.  Let $X$ be the set of neighbors of $m$.  Let $M$ be the
set of vertices of $V(G)\setminus V(S,N)$ that are major with respect
to $(S,N)$.  Let $M^*$ be the set of vertices of $V(G)\setminus
V(S,N)$ that are major with respect to some choice of rungs.  So $m\in
M^*\setminus M$.
	
As noted earlier, every degenerate appearance of any $3$-connected
graph contains a square, so $G$ contains no degenerate appearances of
any $3$-connected graph.  Hence, by 8.4 in \cite{CRST}, we must have
$J=K_4$.  Let $V(J) = \{1,2,3,4\}$.  Since $m$ is major with respect
to some choice of rungs and not major with respect to the strip
system, we may choose rungs $R_{ij}, R'_{ij}$ ($i \neq j \in
\{1,2,3,4\}$) forming line graphs $L(H)$ and $L(H')$ respectively, so
that $X$ saturates $L(H)$ but not $L(H')$.  Moreover, we may assume
that $R_{ij} \neq R'_{ij}$ if and only if $\{i,j\} = \{1,2\}$.
	
Let the ends of each $R_{ij}$ be $r_{ij}$ and $r_{ji}$, where
$\{r_{ij}\mid j \in \{1,2,3,4\} \setminus \{i\}\}$ is a triangle $T_i$
for each $i$.  Similarly, let the ends of each $R'_{ij}$ be $r'_{ij}$
and $r'_{ji}$, where $\{r'_{ij}\mid j \in \{1,2,3,4\} \setminus
\{i\}\}$ is a triangle $T'_i$ for each $i$.
	
Since $X$ saturates $L(H)$, it has at least two members in each of
$T_1, \ldots, T_4$, and since $X$ does not saturate $L(H')$, there is
some $T'_i$ that contains at most one member of $X$.  Since $T_3 =
T'_3$ and $T_4 = T'_4$ we may assume that $|X \cap T_1| = 2$ and $|X
\cap T'_1| = 1$, so $r_{12} \in X$, $r'_{12} \not\in X$, and exactly
one of $r_{13}, r_{14}$ is in $X$, say $r_{13}\in X$ and $r_{14}
\not\in X$.  Also, at least two vertices of $T_3$ are in $X$, and
similarly for $T_4$, so there are at least two branch-vertices of $H'$
incident in $H'$ with more than one edge in $X$.  By 5.7 in
\cite{CRST} applied to $H'$, we deduce that 5.7.5 in \cite{CRST}
holds, so (since odd branches of $H'$ correspond to even rungs in
$L(H')$ and vice-versa) there is an edge $ij$ of $J$ such that
\begin{equation}\label{lg:xp}
\mbox{$R'_{ij}$ is even and $[X \cap V(L(H'))] \setminus V(R'_{ij}) =
(T'_i \cup T'_j) \setminus V(R'_{ij})$.}
\end{equation}
In particular, $T'_i$ and $T'_j$ both contain at least two vertices in
$X$, so $i, j \geq 2$.  Since $r_{13} \in X$, it follows that one of
$i,j$ is equal to $3$, say $j=3$, and so $r_{13}=r_{31}$, in other
words $R_{13}$ has length $0$.  Hence $i\in\{2,4\}$.  We claim that:
\begin{equation}\label{lg:i4}
i=4.
\end{equation}
For suppose that $i=2$.  By (\ref{lg:xp}) $R_{23}$ is even and $[X
\cap V(L(H'))]\setminus V(R_{23}) = \{r'_{21}, r_{24}, r_{31},
r_{34}\}$.  Since at least two vertices of $T_4$ are in $X$ it follows
that $r_{42}=r_{24}$ and $r_{43}=r_{34}$ (and $r_{41}\notin X$).
Hence $R_{24}$ and $R_{34}$ both have length $0$, and since $R_{23}$
is even this is a contradiction to the last axiom in the definition of
a strip system.  Thus (\ref{lg:i4}) holds.

\medskip
	
Therefore we have $i=4$ and $j=3$.  So (\ref{lg:xp}) translates to:
\begin{equation}\label{lg:ss}
\mbox{$R_{34}$ is even and $[X \cap V(L(H'))]\setminus V(R_{34}) =
\{r_{31}, r_{32}, r_{41}, r_{42}\}$.}
\end{equation}
This implies that $V(R'_{12})\cap X=\emptyset$; moreover, if
$r_{23}\in X$ then $r_{23}=r_{32}$, and similarly if $r_{24}\in X$
then $r_{24}=r_{42}$.
\begin{equation}\label{lg:R14}
\longbox{One of $R_{23}, R_{24}$ has length $0$, the other has odd
length, $R_{14}$ has odd length, and $r_{21}\in X$.}
\end{equation}
Since the path $r_{32}$-$R_{23}$-$r_{23}$-$r_{24}$-$R_{24}$-$r_{42}$
can be completed to a hole via
$r_{42}$-$r_{43}$-$R_{34}$-$r_{34}$-$r_{32}$, the first path is even,
and so exactly one of $R_{23}, R_{24}$ is odd.  Since the same path
can be completed to a hole via
$r_{42}$-$r_{41}$-$R_{14}$-$r_{14}$-$r_{13}$-$r_{32}$, it follows that
$R_{14}$ is odd.  Since one of $R_{23}, R_{24}$ is odd, they do not
both have length $0$, and hence at most one of $r_{23}, r_{24}$ is in
$X$.  On the other hand, since $X$ saturates $L(H)$, the triangle
$T_2$ has at least two vertices from $X$; it follows that $r_{21} \in
X$ and that exactly one of $r_{23},r_{24}$ is in $X$ (in other words
exactly one of $R_{23}, R_{24}$ has length $0$).  Thus (\ref{lg:R14}) 
holds.

\begin{equation}\label{lg:r21}
\mbox{$R_{12}$ has length $0$.}
\end{equation}
For suppose that $r_{21} \neq r_{12}$.  If $r_{21}$ has a neighbor in
$R'_{12}$, then $m$ can be linked onto the triangle $T'_1$ via
$R'_{12}$, $R_{14}$ and $m$-$r_{13}$, a contradiction.  Hence $r_{21}$
has no neighbor in $R'_{12}$.  Then from the hole
$m$-$r_{21}$-$r_{24}$-$r'_{21}$-$R'_{1,2}$-$r'_{12}$-$r_{13}$-$m$, we
deduce that the rungs $R_{12}$ and $R'_{12}$ are odd.  But then either
$m$-$r_{21}$-$r_{23}$-$r'_{21}$-$R'_{12}$-$r'_{12}$-$r_{14}$-$R_{14}$-$r_{41}$-$m$
or
$m$-$r_{21}$-$r_{24}$-$r'_{21}$-$R'_{12}$-$r'_{12}$-$r_{14}$-$R_{14}$-$r_{41}$-$m$
is an odd hole, contradiction.  Thus (\ref{lg:r21}) holds.  It follows
that every $12$-rung (in particular $R'_{12}$) has even length.

\begin{equation}\label{lg:r24}
\mbox{$R_{24}$ has length $0$ and $R_{23}$ has odd length.}
\end{equation}
For suppose the contrary.  As shown above, this means that $R_{23}$
has length $0$ and $R_{24}$ has odd length.  Then $R_{24}$, $R_{12}$
and $R_{14}$ contradict the last axiom in the definition of a strip
system (the parity condition).  Thus (\ref{lg:r24}) holds.  So
$r_{24}=r_{42}$ and $r_{23}\neq r_{32}$ (and hence $r_{23}\notin X$).

\begin{equation}\label{lg:R34}
\mbox{Every $34$-rung has non-zero even length.}
\end{equation}
By (\ref{lg:ss}) $R_{34}$ has even length, so every $34$-rung has even
length.  If some $34$-rung has length zero, then its unique vertex $x$
is such that $\{x,r_{42},r_{21},r_{13}\}$ induces a square, a
contradiction.  Thus (\ref{lg:R34}) holds.

\medskip
			
For $i \neq j$, let $O_{ij}$ be the set of vertices that are not major
with respect to $L(H')$ and are complete to $(T'_i \cup T'_j)\setminus
R'_{ij}$.  In particular, $r_{12}$ ($=r_{21}$) is in $O_{12}$ and $m$
is in $O_{34}$, so $O_{12}$ and $O_{34}$ are non-empty.  Every vertex
in $M^*\setminus M$ is complete to $\{r_{13},r_{32},r_{42}, r_{41}\}$
and has no other neighbor outside of $R_{34}$ in $L(H)$.  Moreover,
since $G$ is square-free, every such vertex is adjacent to every
$12$-rung of length $0$.

\begin{equation}\label{eq:alter}
\longbox{For $\{i,j\} \notin \{\{ 1,2\},\{ 3,4\}\}$ and for every rung
$R$ in $S_{ij}$ let $L(H_1)$ (resp.~$L(H_1')$) be the graph obtained
from $L(H)$ (resp.~$L(H')$) by replacing $R_{ij}$ with $R$.  Then $m$
is major with respect to $L(H_1)$ and non-major with respect to
$L(H_1')$.}
\end{equation}
Clearly $m$ is non-major with respect to $L(H_1')$.  Suppose it is
also non-major with respect to $L(H_1)$.  Then by symmetric argument
applied to $L(H)$ and $L(H_1)$, it follows that $R$ is of even length.
So we may assume that $\{ i,j\} =\{ 1,3\}$.  But then $R_{24}$ must be
of non-zero length, a contradiction.  Thus (\ref{eq:alter}) holds.

\medskip

By (\ref{eq:alter}) all rungs in $S_{13}$ and $S_{24}$ have length
$0$, and all rungs in $S_{23}$ and $S_{14}$ are odd.  Also,
$M^*\setminus M$ is complete to $N_{13} \cup N_{32} \cup N_{42} \cup
N_{41}$ and to every zero-length rung in $S_{12}$ and has no other
neighbor in $V(S,N) \setminus S_{34}$.  Thus $M^*\setminus M\subseteq
O_{34}$; and conversely, since $O_{34}$ is complete to $R_{12}$ (for
otherwise $O_{34}\cup \{r_{12},r_{24},r_{13}\}$ would contain a
square), we deduce that $O_{34}=M^*\setminus M$.  We observe that if
$R$ is any $14$-rung or $23$-rung, then $R$ has length at least $3$,
for otherwise $R$ has length $1$ and $V(R)\cup\{r_{21},m\}$ induces a
square.

\medskip

Let $(S',N')$ be the strip system obtained from $(S,N)$ by replacing
$S_{12}$ with $S_{12} \setminus O_{12}$.  It follows from the
definition of $(S',N')$ and the facts above that items (a)--(c) of the
definition of a special $K_4$-strip system hold.  Since only $S_{12}$
and $S_{34}$ have even non-zero rungs, we deduce that item~(d) in that
definition also holds.

Finally suppose that $(S',N')$ is not maximal in $G \setminus (O_{12}
\cup O_{34})$.  Since there is no $J$-enlargement of $(S,N)$ and
$(S,N)$ is maximal, there exists an appearance $(S'',N'')$ of $J$ that
contains $(S',N')$, and we may assume that $(S'',N'')$ is obtained
from $(S',N')$ by adding one rung $R$.  If $R \in S''_{12}$, then
$(S'',N'')$ is an enlargement of $(S,N)$, a contradiction.  So $R \not
\in S''_{12}$, and we do not get a $J$-enlargement by adding $O_{12}
\cap S_{12}$ to $S''_{12}$.  Therefore, there is $r \in O_{12} \cap
S_{12}$ such that we do not get a $J$-enlargement or a larger strip by
adding $r$ to $S''_{12}$.  By 5.8 of \cite{CRST}, $r$ is major with
respect to an appearance of $J$ using the new rung, and non-major
otherwise.  So $R \in S''_{34}$, $|V(R)|=1$ and $V(R) \subseteq
O_{34}$, a contradiction.  Thus, $(S,N)$ is a special $K_4$-strip
system in $G$, and outcome~4 of the theorem holds.
\end{proof}

We now focus on the case of a special $K_4$-strip system.
\begin{lemma}\label{specialcase2}
Let $G$ be a square-free Berge graph and $(S,N)$ be a special
$K_4$-strip system in $G$, with the same notation as in the
definition.  Let $M$ be the set of vertices that are major with
respect to $(S,N)$.  Let $X_1 \in \{N_{12}, N_1\setminus N_{12}\}, X_2
\in \{N_{21}, N_2\setminus N_{21}\}$ and $X = X_1\cup X_2$.  Let $A =
S_{12}\setminus X$ and $B = V(S,N)\setminus (S_{12}\cup X)$.  Let $F
\subseteq V(G)\setminus (V(S,N)\cup M\cup O_{12})$ be connected.  Then
$F$ has attachments in at most one of $A$ and $B$.
\end{lemma}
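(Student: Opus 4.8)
\medskip

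The plan is to argue by contradiction. Suppose $F$ has an attachment $a\in A$ and an attachment $b\in B$, and feed $F$ and $(S,N)$ to Lemma~\ref{SPGT85var}: both of its possible conclusions will contradict item~(e) of the definition of a special $K_4$-strip system, namely the absence of a $K_4$-enlargement and the maximality of $(S,N)$, both taken in $G\setminus(O_{12}\cup O_{34})$. So three things must be checked: that the set of attachments of $F$ in $V(S,N)$ is not local; that the global hypotheses of Lemma~\ref{SPGT85var} hold for $(S,N)$; and that the strip system or enlargement produced by that lemma really lies in $G\setminus(O_{12}\cup O_{34})$.

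\emph{The attachments of $F$ are not local.} Since $a\in S_{12}$ while $b\in B$ is disjoint from $S_{12}$, the pair $\{a,b\}$ lies in no single set $S_{uv}$. It also lies in no single set $N_v$: because $a\in S_{12}$ this would force $v\in\{1,2\}$, and a short inspection of the four possibilities for $(X_1,X_2)$ shows that $a\in N_1$ can happen only when $X_1=N_1\setminus N_{12}$, and then $B$ is disjoint from $N_1$ (the part $N_{12}$ of $N_1$ lies in $S_{12}$, and $N_1\setminus N_{12}\subseteq X$), so $b\notin N_1$; symmetrically for $N_2$. Hence the attachments are not local. The same case analysis yields the companion fact that $A$ is anticomplete to $B$: every edge leaving $S_{12}$ joins $N_{12}$ to $N_{13}\cup N_{14}$ or $N_{21}$ to $N_{23}\cup N_{24}$, and in each of the four cases at least one of the two ends of such an edge has been placed in $X$.

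\emph{The hypotheses of Lemma~\ref{SPGT85var}.} The system $(S,N)$ is non-degenerate: a degenerate $K_4$-strip system would require four unsubdivided edges of $K_4$ forming a $4$-cycle, but by item~(b) the edges $12$, $34$, $14$, $23$ are all subdivided and every $4$-cycle of $K_4$ uses one of them. No vertex is major for one choice of rungs and non-major for another: for a vertex outside $V(S,N)\cup O_{12}\cup O_{34}$ this is item~(d) (a vertex major with respect to $(S,N)$ is major with respect to every choice of rungs); and a vertex $o$ of $O_{12}$ has no neighbour in $S_{34}$ and no neighbour in $N_{41}$ (immediate from the definition of $O_{12}$, item~(a), and the fact that every $14$-rung has length at least $3$), so for every choice of rungs $o$ is nonadjacent to the $N_4$-end of the chosen $14$-rung and to the $N_4$-end of the chosen $34$-rung, hence fails to saturate that choice at the vertex $4$; thus $o$ is major for no choice, and symmetrically every vertex of $O_{34}$ is major for no choice, failing to saturate at the vertex $2$. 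Finally $F$ is connected, disjoint from $V(S,N)$, and contains no vertex of $M$, hence no major vertex. So Lemma~\ref{SPGT85var} applies and returns either (i) a chordless path $P$ with $\emptyset\neq V(P)\subseteq V(F)$ such that $V(S,N)\cup V(P)$ induces a $K_4$-strip system, or (ii) such a path $P$ together with a choice of rungs $R_{uv}$ whose union with $V(P)$ induces the line-graph of a bipartite subdivision of a $K_4$-enlargement.

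\emph{The contradiction, and the main obstacle.} We have $V(P)\subseteq V(F)$ and $F\cap O_{12}=\emptyset$, so $V(P)$ avoids $O_{12}$, and the rungs $R_{uv}$ lie in $V(S,N)$ and so avoid $O_{12}\cup O_{34}$. Hence, if $V(P)$ also avoids $O_{34}$, outcome~(i) produces a strip system in $G\setminus(O_{12}\cup O_{34})$ properly containing $(S,N)$, and outcome~(ii) a $K_4$-enlargement in $G\setminus(O_{12}\cup O_{34})$, each contradicting item~(e); this completes the proof. The step I expect to be the real obstacle is ruling out the possibility that $V(P)$ meets $O_{34}$, since the hypothesis only forbids $O_{12}$. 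The plan here is to exploit the rigidity of an $O_{34}$-vertex $o'$: such an $o'$ is complete to $(N_3\cup N_4)\setminus S_{34}$, which by item~(a) equals $S_{13}\cup N_{32}\cup N_{41}\cup S_{24}$, and is anticomplete to $V(S,N)\setminus(N_3\cup N_4\cup S_{34})$, in particular to $S_{12}\supseteq A$; tracing the path $P$ back through the proof of Lemma~\ref{SPGT85var}, where each vertex of $P$ attaches to $V(S,N)$ only inside one ``local'' part of the system, one should see that $o'$ cannot lie on $P$, or else one re-routes $P$ around the $O_{34}$-vertices using the length-$0$ $13$- and $24$-rungs and still reaches a configuration forbidden by item~(e). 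A more hands-on alternative is to work directly with a shortest path $a$-$p_1$-$\cdots$-$p_k$-$b$ inside $G[F]$, close it into a hole through a $12$-rung containing $a$, a rung containing $b$, and the triangles at the branch-vertices, and derive an odd hole from the parity constraints in item~(b) --- or a square, since $G$ is $C_4$-free; but this needs the adjacencies of $p_1,\dots,p_k$ to the rest of $V(S,N)$ to be controlled, which is precisely what Lemma~\ref{SPGT85var} handles.
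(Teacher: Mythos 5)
Your setup is sound: the non-locality argument is correct and essentially the one in the paper, and your verification that the hypotheses of Lemma~\ref{SPGT85var} hold (non-degeneracy via item~(b), item~(d) for vertices outside $O_{12}\cup O_{34}$, and the observation that vertices of $O_{12}$ and $O_{34}$ saturate no choice of rungs) is a legitimate way to justify invoking that lemma. But the step you yourself flag as ``the real obstacle'' --- ruling out $V(P)\cap O_{34}\neq\emptyset$ --- is a genuine gap, not a technicality, and neither of your two sketched plans closes it. The case is unavoidable: $O_{34}\neq\emptyset$ by item~(c), the hypothesis on $F$ excludes only $M$ and $O_{12}$, and (as the proof of Lemma~\ref{specialcase} shows) adjoining $O_{34}$ to $S_{34}$ really does produce a strictly larger $K_4$-strip system. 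So when $F$ meets $O_{34}$, the first outcome of Lemma~\ref{SPGT85var} can genuinely occur, and the resulting larger strip system contains vertices of $O_{34}$, so item~(e) --- which only forbids enlargements and extensions \emph{inside} $G\setminus(O_{12}\cup O_{34})$ --- is simply not contradicted. ``Re-routing $P$ around the $O_{34}$-vertices'' cannot work in general, and the ``hands-on'' parity alternative is exactly the uncontrolled computation you admit you cannot do.

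The paper closes this case with a separate argument that your proposal is missing. One first takes $F$ minimal with attachments in both $A$ and $B$, so that $F$ is a path $f_1$-$\cdots$-$f_2$ with $f_1$ the only vertex attaching in $A$ and $f_2$ the only vertex attaching in $B$; since every vertex of $O_{34}$ is complete to $N_{32}\cup N_{41}\subseteq B$, minimality forces $F\cap O_{34}\subseteq\{f_2\}$. If $f_2\in O_{34}$, one passes to the strip system $(S',N')$ obtained by adding $O_{34}$ to $S_{34}$ (in which $f_2$ is itself a zero-length $34$-rung), notes that $F\setminus\{f_2\}$ has non-local attachments in $(S',N')$ but no neighbors in $B$, and applies (5.8) of \cite{CRST} to a line graph whose chosen $34$-rung is $f_2$; none of its outcomes is possible, a contradiction. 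Only after concluding $F\cap O_{34}=\emptyset$ does the paper apply Lemma~\ref{SPGT85var} and contradict item~(e) as you do. To repair your proof you would need to supply this (or an equivalent) argument for the $O_{34}$ case.
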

\begin{proof}
Suppose for the sake of contradiction that $F$ has attachments in both
$A$ and $B$.  We may assume that $|F|$ is minimal under this
condition.  Then $F$ forms a path with ends $f_1, f_2$ such that $f_1$
has attachments in $A$, $f_2$ has attachments in $B$, and there are no
other edges between $F$ and $A\cup B$.

Let $Y$ be the set of attachments of $F$ in $V(S,N)$.  Suppose that
$Y$ is local with respect to $(S,N)$.  Then, as $F$ has attachments in
both $A \subseteq S_{12}$ and $B \subseteq V(S,N)\setminus S_{12}$, it
follows that either $Y \subseteq N_1$ or $Y \subseteq N_2$.  We may
assume without loss of generality that $Y \subseteq N_1$.  Then $N_1
\cap A$ is non-empty, so $N_{12} \not \subseteq X$, and $N_1 \cap B$
is non-empty, so $N_1\setminus N_{12} \not \subseteq X$, a
contradiction.  Hence $Y$ is not local in $(S,N)$.
		
Suppose that $F \cap O_{34} \neq \emptyset$.  Then $f_2 \in O_{34}$.
Let $(S',N')$ be the strip system obtained from $(S,N)$ by adding
$O_{34}$ to $S_{34}$.  Then $F \setminus \{f_2\}$ has non-local
attachments in $(S',N')$, and no vertex of $F \setminus \{f_2\}$ has
neighbors in $B$.  Let $L(H)$ be the line graph formed by some choice
of rungs in $(S',N')$, where $f_2$ is the rung chosen from $S_{3,4}$,
and the rung from $S_{1,2}$ contains a neighbor of $f_1$.  Apply 5.8
of \cite{CRST}.  Since no vertex of $F \setminus \{f_2\}$ has a
neighbor in $B \setminus \{f_2\}$, none of the outcomes are possible,
a contradiction.  This proves that $F \cap O_{34} = \emptyset$.  So
$F\subseteq V(G)\setminus (V(S,N)\cup M\cup O_{12}\cup O_{34})$.  By
Lemma~\ref{specialcase}, $(S,N)$ is maximal in $G \setminus (O_{12}
\cup O_{34})$, and no vertex of $V(G) \setminus (V(S,N) \cup M \cup
O_{12} \cup O_{34})$ is major or overshadowing with respect to
$(S,N)$, a contradiction to Lemma~\ref{SPGT85var}.  This proves the
theorem.
\end{proof}	

\begin{lemma}\label{specialcase3}
Let $G$ be a square-free Berge graph and $(S,N)$ be a special
$K_4$-strip system in $G$, with the same notation as in the
definition.  Let $M$ be the set of vertices that are major with
respect to $(S,N)$.  Then:
\begin{enumerate}[label = \emph{(\arabic*)}]	
\item 
$O_{12}\cup M$ and $O_{34} \cup M$ are cliques; and
\item 
there is an integer $k$ such that $O_{12}\cup M \cup (N_1 \setminus
N_{1k})$ is a clique, and similarly there is an integer $\ell$ such
that $O_{12}\cup M \cup (N_2\setminus N_{2\ell})$ is a clique.
\end{enumerate}
\end{lemma}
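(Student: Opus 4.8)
The plan is to reduce part~(2) to part~(1) using Lemma~\ref{C4cliques}, and to prove part~(1) by a short series of $C_4$-detections. The key observation for part~(1) is that, by axiom~(a), the sets $S_{13}=N_{13}=N_{31}$ and $S_{24}=N_{24}=N_{42}$ are non-empty, are anticomplete to each other (the edges $13$ and $24$ of $K_4$ share no vertex), and lie outside both $S_{12}$ and $S_{34}$; hence every vertex of $O_{12}$ and of $O_{34}$ is complete to $S_{13}\cup S_{24}$. More precisely, reading off the definition of $O_{12}$, each $o\in O_{12}$ is complete to $S_{13}$, $S_{24}$, $N_{14}$ and $N_{23}$.

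\emph{Part~(2) from part~(1).} Assuming part~(1), $K:=O_{12}\cup M$ is a clique. Put $X_1=N_{12}$, $X_2=N_{13}$, $X_3=N_{14}$; these are pairwise disjoint (they lie in distinct sets $S_{uv}$), pairwise complete (the strip axiom for the three edges of $K_4$ at vertex~$1$), and disjoint from $K\subseteq V(G)\setminus V(S,N)$. Every $v\in O_{12}$ is complete to $(N_1\cup N_2)\setminus S_{12}\supseteq N_{13}\cup N_{14}=N_1\setminus X_1$, and every $v\in M$ is, being major, complete to at least two of $N_{12},N_{13},N_{14}$, hence to $N_1\setminus X_i$ for some~$i$. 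Lemma~\ref{C4cliques} then gives an index~$k$ with $O_{12}\cup M\cup(N_1\setminus N_{1k})$ a clique; applying the same argument with $(N_{21},N_{23},N_{24})$ in place of $(N_{12},N_{13},N_{14})$ yields the integer~$\ell$.

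\emph{Part~(1).} First, $O_{12}$ is a clique: two non-adjacent vertices of $O_{12}$, together with one vertex of $S_{13}$ and one of $S_{24}$, would induce a $C_4$; and $O_{34}$ is a clique for the same reason. Next, $M$ is a clique: given non-adjacent $m_1,m_2\in M$, for each vertex $u$ of $K_4$ both $m_1$ and $m_2$ are complete to at least two of the three sets $N_{uw}$ ($uw\in E(K_4)$), so one may choose an edge $e_u=uw_u$ of $K_4$ with $m_1,m_2$ both complete to $N_{uw_u}$. A pairwise-intersecting family of edges of $K_4$ has at most three members, so if $e_1,\ldots,e_4$ can be chosen to contain two vertex-disjoint edges $uv,u'v'$, then $N_{uv}$ is anticomplete to $N_{u'v'}$ and $m_1,m_2$ together with a vertex of each induce a $C_4$; a short case-check shows the only way this fails for every valid choice is that some vertex $v$ of $K_4$ is the unique common completeness index at each $u\neq v$, in which case $m_1,m_2$ are complete to $N_{uv}$ for all $u\neq v$, and for two such $u,u'$ the ends $N_{uv}$ and $N_{u'v}$ are anticomplete (by axiom~(b) all rungs have positive length, so these ends differ from the $v$-ends, which carry the only edges between $S_{uv}$ and $S_{u'v}$), again producing a $C_4$ through $m_1,m_2$. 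Finally, $M$ is complete to $O_{12}$: let $m\in M$ and $o\in O_{12}$ be non-adjacent; if $m$ is complete to both $S_{13}$ and $S_{24}$, then $o,m$ with a vertex of $S_{13}$ and a vertex of $S_{24}$ induce a $C_4$; otherwise, up to the automorphism $(1\,2)(3\,4)$ of the special structure, $m$ is not complete to $S_{13}$, so being major it is complete to $N_{14}$ and to at least one $Y$ of $N_{23},S_{24}$, and since $N_{14}$ is anticomplete to both $N_{23}$ and $S_{24}$, the vertices $o,m$ together with a vertex of $N_{14}$ and a vertex of $Y$ induce a $C_4$. The automorphism $(1\,3)(2\,4)$ preserves the axioms of a special $K_4$-strip system and swaps $O_{12}\leftrightarrow O_{34}$ while fixing $M$, so the same conclusion gives $M$ complete to $O_{34}$; hence $O_{12}\cup M$ and $O_{34}\cup M$ are cliques.

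The main obstacle I foresee is in part~(1): carrying out the $C_4$-detections carefully, and especially, in the proof that $M$ is a clique, isolating the exceptional ``star'' configuration of the edges $e_1,\ldots,e_4$ and eliminating it using the positive-length-rung property of axiom~(b). Part~(2) is then a purely formal consequence of Lemma~\ref{C4cliques}.
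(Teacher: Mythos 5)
Your proposal is correct, and part~(2) is exactly the paper's argument: once $O_{12}\cup M$ is known to be a clique, Lemma~\ref{C4cliques} applied to $N_{12},N_{13},N_{14}$ (resp.\ $N_{21},N_{23},N_{24}$) gives the index $k$ (resp.\ $\ell$) immediately. Where you diverge is part~(1). The paper proves it in one stroke: any two non-adjacent $x_1,x_2\in O_{12}\cup M$ are each complete to at least two of $N_{12},N_{13},N_{14}$ and to at least two of $N_{21},N_{23},N_{24}$, so by pigeonhole there is a common $N_{1k}$ and a common $N_{2\ell}$, and these two sets are anticomplete to one another, giving a square; the case $O_{34}\cup M$ is symmetric. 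Your version reaches the same conclusion by splitting into $O_{12}$, $O_{34}$, $M$, and the two cross-completeness claims, and in particular routes the claim that $M$ is a clique through an analysis of pairwise-intersecting edge families of $K_4$. That analysis is sound (four edges $e_u\ni u$, one per vertex, that pairwise intersect must form a star), but it is doing work the paper avoids: fixing the two vertices $1$ and $2$ of $K_4$ and pigeonholing there already covers every pair of vertices of $O_{12}\cup M$ uniformly. So your route is a legitimate alternative, just longer; the only thing it buys is that it never invokes the non-emptiness of $S_{13}$ and $S_{24}$ simultaneously with a choice of $k,\ell$, which the paper's one-line pigeonhole quietly relies on.

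One small slip to fix: in the star case of your ``$M$ is a clique'' argument you justify that $N_{uv}$ and $N_{u'v}$ are anticomplete by saying that ``by axiom~(b) all rungs have positive length.'' That is false for $S_{13}$ and $S_{24}$: axiom~(a) forces $N_{13}=N_{31}=S_{13}$ and $N_{24}=N_{42}=S_{24}$, i.e.\ every rung there has length $0$, so for those strips the $u$-end and the $v$-end coincide. The argument survives because the two strips $S_{uv}$ and $S_{u'v}$ share the vertex $v$ of $K_4$, while $13$ and $24$ are disjoint edges, so at most one of the two strips can be $S_{13}$ or $S_{24}$; the other has all rungs of positive length, its $u'$-end is disjoint from its $v$-end, and that alone kills all edges between $N_{uv}$ and $N_{u'v}$ (the only edges between the two strips run between their $v$-ends). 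Either restrict the choice of $u,u'$ to avoid the zero-length strip, or state the argument as just given; as written, the justification does not match the axioms.
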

\begin{proof}
Suppose that (1) does not hold.  Then there are non-adjacent vertices
$x_1,x_2$ in $O_{12}\cup M$, say.  If $x \in O_{12}$, then by
Lemma~\ref{specialcase} $x$ is complete to $N_{1k}$ for all $k \neq
2$, and complete to $N_{2\ell}$ for all $\ell \neq 1$.  If $x \in M$,
then $x$ is complete to $N_{1k}$ for all but at most one $k$, and
complete to $N_{2\ell}$ for all but at most one $\ell$.  Hence there
exist $k, \ell$ so that $\{x_1,x_2\}$ is complete to $N_{1k}\cup
N_{2\ell}$, so for every $u \in N_{1k}$ and $v \in N_{2\ell}$, $\{x_1,
u, x_2, v\}$ induces a square, contradiction.  This proves~(1).

By definition, for every $x \in O_{12}\cup M$ there are indices $k$
and $\ell$ so that $x$ is complete to $(N_1\setminus N_{1k}) \cup
(N_2\setminus N_{2\ell})$.  Hence (2) follows from (1) by a direct
application of Lemma \ref{C4cliques}.
\end{proof}

\begin{lemma}\label{specialgood}
Let $G$ be a square-free Berge graph.  If $G$ has a special
$K_4$-strip system, then it has a good partition.
\end{lemma}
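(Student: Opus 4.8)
The plan is to read a good partition off the special $K_4$-strip system $(S,N)$ directly, keeping the notation of its definition: $V(J)=\{1,2,3,4\}$, strips $S_{ij}$, hubs $N_i$, sets $N_{ij}$, and $O_{12},O_{34}$; let $M$ be the set of vertices major with respect to $(S,N)$. The central clique $K_2$ will be built from $M$ together with $O_{12},O_{34}$ and the degenerate strips $S_{13},S_{24}$ (as much of these as is mutually compatible); $K_1$ and $K_3$ will be hub-pieces supplied by Lemma~\ref{specialcase3}; and $L,R$ will be the two parts into which the remaining ``cyclic'' portion of the strip system falls once $K_1\cup K_2\cup K_3$ is deleted, with the side-components routed into $L$ and $R$ by Lemmas~\ref{specialcase2} and~\ref{SPGT85var}.

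First I would record a few consequences of square-freeness that neutralize the degenerate strips. Since every $12$-rung and every $34$-rung has length at least $2$ (item~(b)) while $S_{13}$, $S_{24}$ consist of single-vertex rungs (item~(a)), a short argument shows that $M$ is complete to $S_{13}\cup S_{24}$ and that $S_{13},S_{24}$ are cliques: if $m\in M$ missed some $v\in S_{13}$ then, as $m$ saturates $(S,N)$, it would be complete to $N_{12}$ and to $N_{34}$, and for $r_{12}\in N_{12}$, $r_{34}\in N_{34}$ the set $\{m,r_{12},v,r_{34}\}$ would induce a square ($r_{12}\not\sim r_{34}$ since $S_{12},S_{34}$ lie on disjoint edges of $J$); two non-adjacent vertices of $S_{13}$ are handled the same way. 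Combining this with Lemma~\ref{specialcase3}(1) and item~(c), the sets $M\cup O_{12}\cup O_{34}$, $M\cup O_{12}\cup O_{34}\cup S_{13}$ and $M\cup O_{12}\cup O_{34}\cup S_{24}$ are cliques, and moreover $M\cup O_{12}$ together with $N_1\setminus N_{1k}$ (resp.\ $N_2\setminus N_{2\ell}$) is a clique for some $k$ (resp.\ $\ell$) by Lemma~\ref{specialcase3}(2); the analogous statements for $O_{34}$ and the hubs $N_3,N_4$ hold by the Klein four-group of automorphisms $1\!\leftrightarrow\!3,\,2\!\leftrightarrow\!4$ and $1\!\leftrightarrow\!2,\,3\!\leftrightarrow\!4$ of the structure, under which all of (a)--(e) are invariant.

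Next I would set up the partition. Contracting the degenerate strips $S_{13}$ and $S_{24}$ shows that the non-degenerate strips $S_{12},S_{14},S_{23},S_{34}$ are four internally disjoint $P$--$Q$ paths, with $P=N_1\cup N_3$ and $Q=N_2\cup N_4$; so it is the deletion of the two ``poles'' that disconnects $G$. I would take $K_2$ to be $M$ enlarged by those of $O_{12},O_{34},S_{13},S_{24}$ that are complete to the chosen $K_1,K_3$, and take $K_1$ and $K_3$ to be hub-pieces of the form $N_i\setminus N_{ik}$ as above, chosen so that: (ii) holds by construction; after the deletion each $\mathrm{int}(S_{ij})$ lies entirely in one of $L,R$, with two of the four interiors on each side; and the hub vertices outside $K_1\cup K_2\cup K_3$ are placed into $L$ or $R$ consistently. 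Both $L$ and $R$ are non-empty because $S_{12},S_{34}$ have length $\ge 2$, so their interiors are non-empty. The side-components of $G\setminus(V(S,N)\cup M\cup O_{12})$ are routed by Lemma~\ref{specialcase2} (each attaches to at most one of $L,R$), and Lemma~\ref{SPGT85var}---applicable because, being special, $(S,N)$ is maximal in $G\setminus(O_{12}\cup O_{34})$, has no enlargement there, and has no vertex outside $M\cup O_{12}\cup O_{34}$ that is major or overshadowing, by items (d), (e) and Lemma~\ref{specialcase}---ensures no component straddles $L$ and $R$. Then I would verify the axioms: (i) and (ii) as above; (v) by picking $x$ in the interior of an even strip in $L$, $y$ in the interior of an even strip in $R$, and $t$ in the interior of an odd strip (non-empty since $S_{14},S_{23}$ have length $\ge 3$), these three strips being pairwise on disjoint edges of $J$ or sharing a single vertex of $J$, so their interiors are pairwise non-adjacent; (iv) by noting that the chosen $K_1,K_3$ are anticomplete whenever they lie ``around different poles'' (strips on disjoint edges of $J$ carry no edges), and checking the remaining configuration directly. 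The crucial point is (iii): a chordless path from $K_1$ to $K_3$ with interior in $L$ must enter $K_1$ at a vertex of $L$ adjacent to $K_1$, and the reason for choosing $K_1$ of the form $N_i\setminus N_{ik}$ (with $N_{ik}$ equal to $S_{13}/S_{24}$, or to the $i$-end of a strip whose interior was forced into $R$) is that every $L$-neighbour of $K_1$ then lies in another hub-piece $N_{ij}$, which is complete to $K_1$ because the sets $N_{ij}$ with $j$ adjacent to $i$ are pairwise complete inside the hub $N_i$.

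I expect the main obstacle to be precisely the bookkeeping in the third paragraph: satisfying (iii) without destroying (ii) or the non-emptiness of $L$. Forcing $\mathrm{int}(S_{ij})\subseteq R$ whenever the $i$-end of $S_{ij}$ lies in $K_1$ triggers a ``cascade'' that tends to pull the whole cyclic part of the strip system into $R$; stopping it requires $K_3$ to absorb the hub-pieces on the far pole, which is compatible with the clique requirement (ii) only for particular values of the indices $k,\ell$ (and their $O_{34}$-analogues). So the bulk of the proof will be a short case analysis on these indices, cut down by the Klein four-group, producing in each case an explicit admissible $(K_1,K_2,K_3,L,R)$.
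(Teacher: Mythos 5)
Your overall template (cutset $=$ two hub pieces plus $M$ and the $O$-sets, condition (iii) enforced by the mutual completeness of the sets $N_{ij}$ inside a hub, triad from strip interiors) is in the right spirit, but the concrete separation you commit to does not exist. You want $G\setminus(K_1\cup K_2\cup K_3)$ to leave ``two of the four interiors on each side'', with $K_1,K_3$ hub pieces at two poles. No cutset of the required shape achieves this: if $K_1\subseteq N_1$ and $K_3\subseteq N_3$, say, then the hubs $N_2$ and $N_4$ survive essentially intact, the interiors of $S_{12}$ and $S_{23}$ are joined through $N_2$ (as $N_{21}$ is complete to $N_{23}$), the interiors of $S_{14}$ and $S_{34}$ are joined through $N_4$, and these two pairs are joined to each other through $S_{24}=N_{24}=N_{42}$, which meets both $N_2$ and $N_4$. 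Your plan to absorb $S_{24}$ and $O_{34}$ into $K_2$ fails exactly here: since all $12$- and $14$-rungs have length at least $2$, the set $S_{24}$ is \emph{anticomplete} to $N_1=N_{12}\cup N_{13}\cup N_{14}$ (its only neighbours outside $S_{24}$ lie in $N_{21}\cup N_{23}\cup N_{41}\cup N_{43}$), and $O_{34}$ is anticomplete to $N_{12}\cup N_{14}$ by definition; so neither can be added to $K_2$ while keeping $K_1\cup K_2$ a clique, and whichever of $L,R$ they land in violates condition (i). The only separation actually available --- and the only one Lemma~\ref{specialcase2} certifies --- isolates $S_{12}$ from the other three strips, with $K_1,K_3$ chosen inside $N_1$ and $N_2$ (the two ends of the strip carrying $O_{12}$), $K_2=M\cup O_{12}$, and $O_{34}$, $S_{13}$, $S_{24}$ and the three remaining strip interiors all on one side. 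So the count is one interior versus three, never two versus two, and your choice of triad (``$x$ and $y$ in the interiors of the two even strips, $t$ in an odd strip interior'') must be revised accordingly (the paper uses $t\in S_{13}$).

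Even after correcting to this one-versus-three split, the case analysis you defer to ``bookkeeping'' is the real content of the proof and your sketch does not survive it. Lemma~\ref{specialcase3}(2) only guarantees that $M\cup O_{12}\cup(N_1\setminus N_{1k})$ is a clique for \emph{some} $k$, which need not be $k=2$. If $(N_1\setminus N_{12})\cup M\cup O_{12}$ is not a clique, you cannot take $K_1=N_1\setminus N_{12}$, and taking $K_1=N_1\setminus N_{1k}\supseteq N_{12}$ for $k\neq 2$ breaks condition (iii): a chordless path running from $N_{21}$ along a $12$-rung into $N_{12}$ has no interior vertex complete to $N_{14}$. The fix is to take $K_1=N_{12}$ (a clique with $M\cup O_{12}$ by Lemma~\ref{specialcase3}) and to \emph{reverse the roles of $L$ and $R$}, letting $L$ be the component containing $N_1\setminus N_{12}$ so that every path from $K_3$ to $K_1$ through $L$ passes through $N_1\setminus N_{12}$, which is complete to $N_{12}$. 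This two-case analysis at each of $N_1$ and $N_2$ is what the paper's proof consists of, and it is missing from your argument.
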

\begin{proof}
Let $(S,N)$ be a special $K_4$-strip system of $G$, with the same
notation as above.  Let $M$ be the set of vertices that are major with
respect to $(S,N)$.  There are vertices $t_{12}\in S_{12}\setminus
(N_{12}\cup N_{21})$, $t_{34}\in S_{34}\setminus (N_{34}\cup N_{43})$
and $t_{13}\in S_{13}$, and hence $\{t_{12}, t_{34}, t_{13}\}$ is a
triad $\tau$.

Suppose that both $(N_1 \setminus N_{12}) \cup M \cup O_{12}$ and
$(N_2 \setminus N_{21}) \cup M \cup O_{12}$ are cliques.  Let $K_1=N_1
\setminus N_{12}$, $K_2=O_{12} \cup M$, and $K_3=N_2 \setminus
N_{21}$.  By Lemma~\ref{specialcase2}, $K_1\cup K_2\cup K_3$ is a
cutset.  Let $L$ be the union of those components of $G\setminus
(K_1\cup K_2\cup K_3)$ that contain vertices of $S_{12}$, and let
$R=V(G) \setminus (L \cup K_1 \cup K_2 \cup K_3)$.  Then $K_1$ is
anticomplete to $K_3$, and every  path from $K_3$ to $K_1$
with interior in $L$ contains a vertex of $N_{12}$, which is complete
to $K_1$, and $\tau$ is a triad that contains a vertex of $L$ and a
vertex of $R$.  So $(K_1, K_2, K_3, L, R)$ is a good partition of
$V(G)$.

Now assume, up to symmetry, that $(N_1 \setminus N_{12}) \cup M \cup
O_{12}$ is not a clique.  By Lemma~\ref{specialcase3}, $N_{12} \cup M
\cup O_{12}$ is a clique.  Also, at least one of $N_{21} \cup M \cup
O_{12}$ and $(N_2 \setminus N_{21}) \cup M \cup O_{12}$ is a clique.
If the former is a clique, let $X=N_{21}$, and otherwise let $X=N_2
\setminus N_{21}$.  Set $K_1=N_{12}$, $K_2=M \cup O_{12}$, and
$K_3=X$.  By Lemma~\ref{specialcase2}, $K_1\cup K_2\cup K_3$ is a
cutset.  Let $L$ be the component of $G\setminus (K_1\cup K_2\cup
K_3)$ that contains $N_1 \setminus N_{12}$ (note that $N_1\setminus
N_{12}$ is connected because $N_{13}$ is complete to $N_{14}$), and
let $R=V(G) \setminus (L \cup K_1 \cup K_2 \cup K_3)$.  Then $K_1$ is
anticomplete to $K_3$, and every path from $K_3$ to $K_1$ with
interior in $L$ contains a vertex of $N_1 \setminus N_{12}$, which is
complete to $K_1$, and $\tau$ is a triad that contains a vertex of $L$
and a vertex of $R$.  So $(K_1, K_2, K_3, L, R)$ is a good partition
of $V(G)$.
\end{proof}


Now we can give the proof of Theorem \ref{thm:lgb}.

\begin{proof}
Since $G$ contains the line-graph of a bipartite subdivision of $K_4$,
there is a $3$-connected graph $J$ such that $G$ contains an
appearance of $J$, and we choose $J$ maximal with this property.
Hence $G$ contains the line-graph $L(H)$ of a bipartite subdivision
$H$ of $J$.  Then there exists a $J$-strip system $(S,N)$ such that
$V(S,N)\subseteq V(G)$, and we choose $V(S,N)$ maximal.  Let $M$ be
the set of vertices in $V(G)\setminus V(S,N)$ that are major with
respect to the strip system $(S,N)$.  We observe that:
\begin{equation}\label{eq:lgb1} 
\mbox{$M$ is a clique.}
\end{equation}
Suppose that $m,m'$ are non-adjacent vertices in $M$.  Let $B$ be a
branch of $H$, and let $u,v$ be its ends.  Since there is no triangle
in $H$, there exist a neighbor $u'$ of $u$ and a neighbor $v'$ of $v$
in $H$ such that $N_{uu'}$ and $N_{vv'}$ are complete to $M$ and
anticomplete to each other.  Then $\{m,m',u',v'\}$ induces a square.
This proves (\ref{eq:lgb1}).
\begin{equation}\label{eq:lgb2} 
\longbox{For every branch vertex $u$ in $H$, there is a branch vertex
$v$ in $H$ such that $M\cup (N_u\setminus N_{uv})$ is a clique.}
\end{equation}
It follows from (\ref{eq:lgb1}) that $M$ is a clique, and by the
definition of major vertices, for every $m \in M$ and every branch
vertex $u$ there is a branch vertex $v$ such that $m$ is complete to
$N_u\setminus N_{uv}$.  Hence (\ref{eq:lgb2}) follows by a direct
application of Lemma~\ref{C4cliques}.

\medskip
 		
If some vertex of $V(G)\setminus V(S,N)$ is major with respect to some
choice of rungs but not with respect to the strip system, then by
Lemma~\ref{specialcase} $G$ has a special $K_4$-strip system, and by
Lemma~\ref{specialgood} $G$ has a good partition, so the theorem
holds.  Therefore we may assume that every vertex of $V(G)\setminus
V(S,N)$ that is major with respect to some choice of rungs is major
with respect to the strip system.  By Lemma~\ref{SPGT85var} (or
Theorem~8.5 from \cite{CRST}), every component of
$V(G)\setminus(V(S,N)\cup M)$ attaches locally to $V(S,N)$. 
\begin{equation}\label{triad}
\longbox{For every strip $S_{uv}$ there exists a triad $\{t,t',t''\}$
in $G$ such that $t\in S_{uv}$ and $t',t''\in V(S,N) \setminus
(S_{uv}\cup N_u\cup N_v)$.}
\end{equation}
For every strip $S_{xy}$ let $R_{xy}$ be an $xy$-rung, with
endvertices $r_{xy}\in N_{xy}$ and $r_{yx}\in N_{yx}$.  Suppose that
$r_{uv}\neq r_{vu}$.  Since $J$ is $3$-connected, there is a cycle $C$
in $J$ that contain $u$ and not $v$.  In $G$ let $C' = \bigcup_{xy\in
E(C)} R_{xy}$.  Then $C'$ is a hole of length at least~$6$, and it is
even since $G$ is Berge, so it has two non-adjacent vertices $t',t''$
that are not in $N_u$.  Then $\{r_{vu}, t', t''\}$ is the desired
triad.  Now suppose that $r_{uv}= r_{vu}$.  There is a cycle $C$ in
$J$ that contains $u$ and $v$.  In $G$ let $C' = \bigcup_{xy\in E(C)}
R_{xy}$.  Then $C'$ is an even hole, of length at least~$6$, so it has
three non-adjacent vertices including $r_{uv}$.  Then these vertices
form the desired triad.  So (\ref{triad}) holds.

\medskip

For every strip $S_{uv}$, let $S_{uv}^*$ denote the union of $S_{uv}$
with the components of $G \setminus V(S,N)$ that attach in $S_{uv}$
only, and let $T_{uv}=N_u \cap N_v$ ($=N_{uv}\cap N_{vu}$).  Note that
$T_{uv}$ is complete to $N_u\setminus N_{uv}$ and to $N_v\setminus
N_{vu}$.  Moreover we observe that:
\begin{equation}\label{MTuv}
\mbox{$M\cup T_{uv}$ is a clique.}
\end{equation}
Suppose that $M\cup T_{uv}$ has two non-adjacent vertices $a,b$.  By
(\ref{eq:lgb2}), and since every branch vertex in $H$ has degree at
least~$3$, $M$ is complete to at least one vertex $n_u\in N_u
\setminus N_{uv}$, and similarly to at least one vertex $n_v\in N_v
\setminus N_{vu}$.  By (\ref{eq:lgb1}) at least one of $a,b$ is in
$T_{uv}$, say $a\in T_{uv}$.  Since edges in $H$ that correspond to
$a,n_u$ and $n_v$ cannot induce a triangle (as $H$ is bipartite), it
follows that $n_u$ and $n_v$ are not adjacent.  Then $\{a,b,n_u,n_v\}$
induces a square, a contradiction.  So (\ref{MTuv}) holds.

\medskip

Let us say that a strip $S_{uv}$ is {\em rich} if $S_{uv} \setminus
T_{uv} \neq \emptyset$.

\begin{equation}\label{ifrich}
\mbox{If $(S,N)$ has a rich strip, the theorem holds.}
\end{equation}
Let $S_{uv}$ be a rich strip in $(S,N)$.  First suppose that both $M
\cup (N_u \setminus N_{uv})$ and $M \cup (N_v \setminus N_{vu})$ are
cliques.  Hence, by (\ref{MTuv}) and the definition of $T_{uv}$, both
$M \cup (N_u \setminus N_{uv})\cup T_{uv}$ and $M \cup (N_v \setminus
N_{vu})\cup T_{uv}$ are cliques.  Let $K_1=N_u \setminus N_{uv}$,
$K_2=M \cup T_{uv}$, $K_3=N_v \setminus N_{vu}$, let $L$ consist of
$S^*_{uv}\setminus T_{uv}$ together with those components of
$G\setminus V(S,N)$ that attach only to $N_u$ and those that attach
only to $N_v$, and let $R=V(G) \setminus (K_1 \cup K_2 \cup K_3 \cup
L)$.  Then every  path from $K_3$ to $K_1$ with interior in
$L$ contains a vertex of $N_{uv}$, which is complete to $K_1$, and no
vertex of $L$ has both a neighbor in $K_1$ and a neighbor in $K_3$;
moreover, by (\ref{triad}) there is a triad $\{t, t', t''\}$ with
$t\in S_{uv}$ and $t',t''\in V(S,N)\setminus (S_{uv}\cup N_u\cup
N_v)$, so this is a triad with a vertex (namely $t$) in $L$ and a
vertex in $R$; so $(K_1, K_2, K_3, L, R)$ is a good partition of
$V(G)$.  \\
Therefore we may assume that $M \cup (N_u \setminus N_{uv})$ is not a
clique, and so $M \cup N_{uv}$ is a clique.  If $M \cup (N_v \setminus
N_{vu})$ is a clique, let $K_1=N_{uv}\setminus T_{uv}$, $K_2=M \cup
T_{uv}$, $K_3=N_v \setminus N_{vu}$, let $R$ consist of $S_{uv}^*
\setminus N_u$ together with those components of $G\setminus V(S,N)$
that attach only to $N_v$, and let $L=V(G) \setminus (R \cup K_1 \cup
K_2 \cup K_3)$.  Then $K_1$ is anticomplete to $K_3$, and every
 path from $K_3$ to $K_1$ with interior in $L$ contains a
vertex of $N_u \setminus N_{uv}$, which is complete to $K_1$;
moreover, by (\ref{triad}) there is a triad $\{t, t', t''\}$ with
$t\in S_{uv}$ and $t',t''\in V(S,N)\setminus (S_{uv}\cup N_u\cup
N_v)$, so this is a triad with a vertex in $L$ and a vertex (namely
$t$) in $R$; So $(K_1, K_2, K_3, L, R)$ is a good partition of $V(G)$.
\\
Therefore we may assume that for every rich strip $S_{xy}$, both
$M\cup N_{xy}$ and $M\cup N_{yx}$ are cliques, and neither of $M\cup
(N_{x} \setminus N_{xy})$ and $M\cup (N_{y} \setminus N_{yx})$ is a
clique.  Hence, regarding $S_{uv}$, there is an edge $uw$ in $J$ such
that $M\cup N_{uw}$ is not a clique.  Then $S_{uw}$ is not rich, and
hence $S_{uw}=T_{uw}=N_{uw}$.  By (\ref{MTuv}) $M\cup T_{uw}=M\cup
N_{uw}$ is a clique, a contradiction.  So (\ref{ifrich}) holds.

\medskip

By (\ref{ifrich}) we may assume that there is no rich strip in
$(S,N)$.  It follows that for every $uv\in E(J)$ we have
$S_{uv}=T_{uv}$, which is a clique by (\ref{MTuv}).  Consequently
$N_u$ is a clique for every $u$, and by (\ref{MTuv}), $M\cup N_u$ is a
clique for every $u$.  Let $S_{uv}$ be a strip.  By (\ref{triad})
there is a triad $\{t, t', t''\}$ with $t\in S_{uv}$ and $t',t''\in
V(S,N)\setminus (S_{uv}\cup N_u\cup N_v)$.  Let $K_1=N_u \setminus
S_{uv}$, $K_2=M$, $K_3=N_v \setminus S_{uv}$, let $L$ consist of
$S_{uv}^*$ together with the components of $G\setminus V(S,N)$ that
attach only to $N_u$ and only to $N_v$, and let $R=V(G) \setminus (K_1
\cup K_2 \cup K_3 \cup L)$.  Then $K_1$ is anticomplete to $K_3$
(since there is no triangle in $H$), and every  path from
$K_3$ to $K_1$ with interior in $L$ contains a vertex of $S_{uv}$,
which is complete to $K_1$, and $\{t, t', t''\}$ is a triad with a
vertex in $L$ and a vertex in $R$.  So $(K_1, K_2, K_3, L, R)$ is a
good partition of $V(G)$.  This concludes the proof.
\end{proof}	

\section{Algorithmic aspects}

Assume that we are given a graph $G$ on $n$ vertices.  We want to know
if $G$ is a square-free Berge graph and, if it is, we want to produce
an $\omega(G)$-coloring of $G$.  We can do that as follows, based on
the method described in the preceding sections.  We can first test
whether $G$ is square-free in time $O(n^4)$.  Therefore let us assume
that $G$ is square-free.

Let $\cal A$ be the class of graphs that contain no odd hole, no
antihole of length at least $6$, and no prism (sometimes called
``Artemis'' graphs).  There is an algorithm, ``Algorithm 3'' in
\cite{MT}, of time complexity $O(n^9)$, which decides whether the
graph $G$ is in class~$\cal A$ or not, and, if it is not, returns an
induced subgraph of $G$ that is either an odd hole, an antihole of
length at least $6$, or a prism.  If the first outcome happens, then
$G$ is not Berge and we stop.  The second outcome cannot happen since
$G$ is square-free.  Therefore we may assume that $G$ is Berge and
that the algorithm has returned a prism $K$.  We want to extend $K$
either to a maximal hyperprism or to the line-graph of a bipartite
subdivision of $K_4$.  We can do that as follows.  Let $K$ have rungs
$R_1$, $R_2$, $R_3$, where, for each $i=1,2,3$, $R_i$ has ends $a_i,
b_i$, such that $\{a_1, a_2, a_3\}$ and $\{b_1, b_2, b_3\}$ are
triangles.
\begin{itemize}
\item
Initially, for each $i\in\{1,2,3\}$ let $A_i=\{a_i\}$, $B_i=\{b_i\}$
and $C_i=V(R_i)\setminus\{a_i, b_i\}$.  Let $V(H)=V(K)$.
\item
Let $M$ be the set of major neighbors of $H$.
\item
If there is a component $F$ of $G\setminus (H\cup M)$ whose set of
attachments on $H$ is not local, then by Lemma~\ref{lem:hyplocal}, one
of the following occurs (and can be found in polynomial time):
\begin{itemize}
\item[(i)] 
There is a path $P$ in $F$ such that $V(H)\cup V(P)$ induces
a larger hyperprism $H'$; or
\item[(ii)] 
There are three rungs $R_1, R_2, R_3$ of $H$, one in each strip of
$H$, and a  path $P$ in $F$, such that $V(R_1)\cup V(R_2)\cup
V(R_3)\cup V(P)$ induces the line-graph of a bipartite subdivision of
$K_4$.
\end{itemize}
\end{itemize}
Assume that outcome (ii) never happens.  Whenever outcome (i) happens,
we start again from the larger hyperprism that has been found.  Note
that outcome (i) can happen only $n$ times, because at each time we
start again with a strictly larger hyperprism.  So the procedure
finishes with a maximal hyperprism.  Then we can find a good partition
of $G$ as explained in Theorem~\ref{thm:epr} or~\ref{thm:opr},
decompose $G$ along that partition, and color $G$ using induction as
explained in Lemma~\ref{lem:cutset}.

\medskip 

Remark:  Since a hyperprism may have exponentially many rungs, we
need to show how  we can determine in polynomial time the set $M$ of
major neighbors of a hyperprism $H$ in a graph $G$ without listing
all the rungs of $H$. It is easy to see that a vertex $x$ in
$V(G)\setminus V(H)$ is a major neighbor of $H$ if and only if one of
the following two situations occurs: 
\begin{itemize}
\item 
For at least two distinct values of $i\in\{1,2,3\}$, there exists an
$i$-rung $R_i$ such that $x$ is adjacent to both ends of $R_i$, or
\item
For a permutation $\{i,j,k\}$ of $\{1,2,3\}$, there exists an
$i$-rung $R_i$ such that $x$ adjacent to both ends of $R_i$ and $x$
has a neighbor in $A_j$ and a neighbor in $B_k$.
\end{itemize}
So it suffices to test, for each $i\in\{1,2,3\}$, whether there exists
an $i$-rung such that $x$ is adjacent to both its ends.  This can be
done as follows.  For every pair $u_i\in A_i$ and $v_i\in B_i$, test
whether there is a  path between $u_i$ and $v_i$ in the
subgraph induced by $C_i\cup\{u_i,v_i\}$.  If there is any such path
$R_i$, then record it for the pair $\{u_i,v_i\}$, and for every vertex
$x$ in $V(G)\setminus V(H)$ record whether $x$ is adjacent to both
$u_i$ and $v_i$ or not.  This takes time $O(n^4)$ ($O(n^2)$ for each
pair $\{u_i,v_i\}$).  So the whole procedure of growing the hyperprism
and determining the set $M$ of its major neighbors takes time
$O(n^4)$.

\bigskip

Now assume that outcome (ii) happens, and so $G$ contains the
line-graph of a bipartite subdivision of $K_4$.  So $G$ contains the
line-graph of a bipartite subdivision of a $3$-connected graph $J$,
and we want to grow $J$ and the corresponding $J$-strip system $(S,N)$
to maximality.  We can do that as follows.
\begin{itemize}
\item
Initially, let $(S,N)$ be the strip system equal to the line-graph of 
a bipartite subdivision of $K_4$ found in outcome (ii). 
\item
Let $M$ be the set of vertices in $V(G)\setminus V(S,N)$ that are
major on some choice of rungs of $(S,N)$.  (Determining $M$ can be
done with the same arguments as in the remark above concerning the set
of major neighbors of a hyperprism, and we omit the details.)
\item
If there is a component $F$ of $G\setminus (V(S,N)\cup M)$ whose set
of attachments on $H$ is not local, then by Lemma~\ref{SPGT85var}, one
of the following occurs (and can be found in polynomial time):
\begin{itemize}
\item 
A  path $P$, with $\emptyset\neq V(P)\subseteq V(F)$, such
that $V(S,N)\cup V(P)$ induces a $J$-strip system, or
\item 
A  path $P$, with $\emptyset\neq V(P)\subseteq V(F)$, and for
each edge $uv\in E(J)$ a $uv$-rung $R_{uv}$, such that $V(P)\cup
\bigcup_{uv\in E(J)} R_{uv}$ is the line-graph of a bipartite
subdivision of a $J$-enlargement.
\end{itemize}
\item
If some vertex in $M$ is not major on some choice of rungs of $(S,N)$,
then, by Lemma~\ref{specialcase}, we can either find a larger strip
system or the special case described in item (iv) of that
lemma.

In either case, whenever we find a larger strip system we start again
with it.  This will happen at most $n$ times.  So the procedure
finishes with a maximal strip system.  Similarly to the case of the
hyperprism, the whole procedure of growing the strip system and
determining the set $M$ of its major neighbors takes time $O(n^4)$.
Then we can find a good partition of $G$ as explained in
Theorem~\ref{thm:lgb}, decompose $G$ along that partition, and color
$G$ using induction as explained in Lemma~\ref{lem:cutset}.
\end{itemize}

\bigskip

\noindent{\it Complexity analysis.} Whenever $G$ contains a prism, we
have shown that $G$ has a partition into sets $K_1$, $K_2$, $K_3$,
$L$, $R$ such that $K_1\cup K_2$ and $K_2\cup K_3$ are cliques, with
$L$ and $R$ non-empty, and $L$ is anticomplete to $R$.  Then $G$ is
decomposed into the two proper induced subgraphs $G\setminus L$ and
$G\setminus R$.  These subgraphs themselves may be decomposed, etc.
This can be represented by a decomposition tree $T$, where $G$ is the
root, and the children of every non-leaf node $G'$ are the two induced
subgraphs into which $G'$ is decomposed.  Every leaf is a subgraph
that contains no prism.

Let us consider the triads of $G$.  By item~(v) of a good partition,
there exists a triad $\tau_G$ that has at least one vertex from each
of $L,R$; we label $G$ with $\tau_G$.  Since the cutset $K_1\cup
K_2\cup K_3$ is the union of two cliques it contains no triad, and so
no triad of $G$ is in both $G\setminus L$ and $G\setminus R$; moreover
$\tau_G$ itself is in none of these two subgraphs.  Consequently every
triad of $G$ can be used as the label of at most one non-leaf node of
$T$.  So $T$ has at most $n^3$ non-leaf nodes.  Since every node has
at most two children, the number of leaves is at most $2n^3$, and the
total number of nodes of $T$ is at most $3n^3$.

Testing if $G$ is Berge takes time $O(n^9)$; this is done only once,
at the first step of the algorithm, as a subroutine of testing whether
$G$ is in class~$\cal A$.  At any decomposition node of $T$ different
from the root we already know that we have a Berge graph (an induced
subgraph of $G$), so we need only test whether the graph contains a
prism; this can be done in time $O(n^5)$ with ``Algorithm~2'' from
\cite{MT}.  The complexity of coloring a leaf (which contains
no prism) is $O(n^6)$ in \cite{MT} and $O(n^4)$ in \cite{LMRT}. The
coloring algorithm described in Lemma~\ref{lem:cutset} involves only a
few bichromatic exchanges, so its complexity is small.  The complexity
of growing a hyperprism (once a prism is known) or a strip structure
is also negligible in comparison with the rest.  So the total
complexity of the algorithm is $O(n^9)$ $+$ $O(n^3)\times O(n^4)=$
$O(n^9)$ (proving Theorem~\ref{thm:main}).


\subsection*{Acknowledgment}
We are grateful to Aur\'elie Lagoutte for useful discussions. We also
thank Cemil Dibek and Sophie Spirkl for finding a mistake in
Theorem~\ref{thm:ohyp} and for their help in correcting it.


\end{document}